\pgfplotsset{compat=newest}
\newtheorem{thm}{Theorem}[section]
\newtheorem{lm}[thm]{Lemma}
\newtheorem{defn}[thm]{Definition}
\newtheorem{prop}[thm]{Proposition}
\newtheorem{rmk}[thm]{Remark}
\numberwithin{equation}{section}
\newcommand{\argmax}{\textrm{argmax}}
\newcommand{\bbB}{\mathbb{B}}
\newcommand{\bfB}{\mathbf{B}}
\newcommand{\bfD}{\mathbf{D}}
\newcommand{\bfH}{\mathbf{H}}
\newcommand{\bfT}{\mathbf{T}}
\newcommand{\bfX}{\mathbf{X}}
\newcommand{\complexC}{\mathbb{C}}
\newcommand{\density}{p}
\newcommand{\dequal}{\stackrel{d}{=}}
\newcommand{\dL}{\mathcal{L}}
\newcommand{\betasmin}{\beta_{\min{}}}
\newcommand{\betasmax}{\beta_{\max{}}}
\newcommand{\betaemin}{\beta'_{\min{}}}
\newcommand{\betaemax}{\beta'_{\max{}}}
\newcommand{\alphamin}{\alpha_{\min{}}}
\newcommand{\alphamax}{\alpha_{\max{}}}
\newcommand{\FGUE}{F_{\mathrm{GUE}}}
\newcommand{\fGUE}{f_{\mathrm{GUE}}}
\newcommand{\hKPZ}{\mathcal{H}^{\mathrm{KPZ}}}
\newcommand{\identity}{\mathbf{1}}
\newcommand{\inn}{\mathrm{in}}
\newcommand{\prob}{\mathbb{P}}
\newcommand{\out}{\mathrm{out}}
\newcommand{\rC}{\mathrm{C}}
\newcommand{\rd}{\mathrm{d}}
\newcommand{\rD}{\mathrm{D}}
\newcommand{\realR}{\mathbb{R}}
\newcommand{\rH}{\mathrm{H}}
\newcommand{\rI}{\mathrm{I}}
\newcommand{\rmi}{\mathrm{i}}
\newcommand{\rL}{\mathrm{L}}
\newcommand{\rp}{\mathrm{p}}
\newcommand{\rR}{\mathrm{R}}
\newcommand{\UTfield}{\mathcal{H}_0^{\mathrm{UT}}}
\newcommand{\LUTfield}{\mathcal{H}^{\mathrm{UT}}}
\renewcommand{\vec}[1]{\ensuremath{\boldsymbol{#1}}}
\def\fddto{\xrightarrow{\textit{f.d.d.}}}
\def\law{{\rm Law}}
\newcommand{\rS}{\mathrm{S}}
\newcommand{\cA}{\mathcal{A}}
\title{Limiting one-point fluctuations of the geodesic in the directed landscape near the endpoints when the geodesic length goes to infinity}
\author{Zhipeng Liu\footnote{Department of Mathematics, University of Kansas, Lawrence, KS 66045. Email: \texttt{zhipeng@ku.edu}} \and Chenyang Ma\footnote{Department of Mathematics, University of Kansas, Lawrence, KS 66045. Email: \texttt{c175m266@ku.edu}} \and Tejaswi Tripathi\footnote{Department of Mathematics, University of Kansas, Lawrence, KS 66045. Email: \texttt{tejaswit@ku.edu}}}
\date{}
\begin{document}

\maketitle

\begin{abstract}
    We consider the limiting fluctuations of the geodesic in the directed landscape, conditioning on its length going to infinity. It was shown in \cite{Liu22b,Ganguly-Hegde-Zhang23} that when the directed landscape $\mathcal{L}(0,0;0,1) = L$ becomes large, the geodesic from $(0,0)$ to $(0,1)$ lies in a strip of size $O(L^{-1/4})$ and behaves like a Brownian bridge if we zoom in the strip by a factor of $L^{1/4}$. Moreover, the length along the geodesic with respect to the directed landscape fluctuates of order $O(L^{1/4})$ and its limiting one-point distribution is Gaussian \cite{Liu22b}. In this paper, we further zoom in a  smaller neighborhood of the endpoints when $\mathcal{L}(0,0;0,1) = L$ or $\mathcal{L}(0,0;0,1) \ge L$, and show that there is a critical scaling window $L^{-3/2}:L^{-1}:L^{-1/2}$ for the time, geodesic location, and geodesic length, respectively. Within this scaling window, we find a nontrivial limit of the one-point joint distribution of the geodesic location and length as $L\to\infty$. This limiting distribution, if we tune the time parameter to infinity, converges to the joint distribution of two independent Gaussian random variables, which is consistent with the results in \cite{Liu22b}. We also find a surprising connection between this limiting distribution and the one-point distribution of the upper tail field of the KPZ fixed point recently obtained in \cite{Liu-Zhang25}.
\end{abstract}

\section{Introduction}

\subsection{Motivation and main results}

The directed landscape $\dL(x,s;y,t)$, defined on $\realR_{\uparrow}^4:=\{(x,s;y,t): x,y,s,t\in\realR, s<t\}$, is a random ``metric'' constructed in \cite{Dauvergne-Ortmann-Virag22} as a limiting field of the Brownian last passage percolation. It is believed, and in some cases proved, to be a universal limit for models in the Kardar-Parisi-Zhang universality class \cite{Dauvergne-Virag21,Wu23,Aggarwal-Corwin-Hegde24,Dauvergne-Zhang24}. The directed landscape $\dL(x,s;y,t)$ satisfies the following reverse triangle inequality 
\begin{equation}
\dL(x,s;x',s') + \dL(x',s';y,t) \le \dL(x,s; y,t),\quad \text{ for all } x,x',y,s,s',t\in\realR, \text{ satisfying }s<s'<t.
\end{equation}
This induces the concept of the geodesic $\pi_{x,s;y,t}: [s,t]\to \realR$ for any $(x,s;y,t)\in\realR_{\uparrow}^4$, such that $\pi_{x,s;y,t}(s) = x$, $\pi_{x,s;y,t}(t) =y$, and 
\begin{equation}
    \label{eq:def_geodesic}
\dL (x,s; \pi_{x,s;y,t}(s'),s') + \dL(\pi_{x,s;y,t}(s'),s';y,t) = \dL(x,s;y,t),\quad \text{for all }s'\in(s,t).
\end{equation}
It is known that such a geodesic $\pi_{x,s;y,t}$ for fixed $(x,s;y,t)\in\realR_{\uparrow}^4$ exists and is unique almost surely \cite{Dauvergne-Ortmann-Virag22}. One simple property of a geodesic is that any segment of the geodesic is also a geodesic between the two endpoints. Hence, we call $\dL(\pi(s'),s';\pi(t'),t')$ the \emph{length} of the geodesic segment from $(\pi(s'),s')$ to $(\pi(t'),t')$ for all $s'$ and $t'$ satisfying $s\le s'<t'\le t$. 

The directed landscape and its geodesics have very nice shifting and scaling invariance properties, which allow a translation of the properties of a geodesic $\pi_{x,s;y,t}$ to that of another geodesic $\pi_{x',s';y',t'}$ with different parameters. See Lemma 10.2 in \cite{Dauvergne-Ortmann-Virag22} for these properties. Without loss of generality, we consider the geodesic $\pi_{0,0;0,1}$ in this paper and denote
\begin{equation}
\pi(t) = \pi_{0,0;0,1}(t),\quad 0\le t\le 1
\end{equation}
as the geodesic location, and 
\begin{equation}
\dL(t) = \dL(0,0; \pi(t),t),\quad 0<t\le 1
\end{equation}
as the length of the geodesic segment from $(0,0)$ to $(\pi(t),t)$. Note that $\dL(1) = \dL(0,0;0,1)$. We also set $\dL(0)=0$ for convenience. 

\bigskip

We are mainly interested in the limiting fluctuations of the geodesic $\pi(t)$ and the length $\dL(t)$, conditioning on $\dL(1)=L$ becoming very large, which is called the upper tail conditioning. There have been quite a number of recent results on the large deviation of the directed landscape and related objects in the upper tail regime \cite{Basu-Ganguly19,Ganguly-Hedge22,Lamarre-Yves_Lin_Tsai2023,Lin-Tsai25,Das-Dauvergne-Virag24,Das-Tsai2024}, and their limiting conditional fluctuations \cite{Liu22b,Liu-Wang24,Ganguly-Hegde-Zhang23,Baik-Liu2024,Liu-Zhang25,Baik-Cordaro-Tripathi2025}. Particularly, the fluctuations of the geodesic with the upper tail conditioning were considered in \cite{Liu22b,Liu-Wang24,Ganguly-Hegde-Zhang23}. It was shown in \cite{Liu22b} that for any fixed $t\in(0,1)$, when $L$ becomes large, $\pi(t)$ fluctuates of order $O(L^{-1/4})$, and $\dL(t)$ fluctuates of order $O(L^{1/4})$. Furthermore, $\pi(t)$ and $\dL(t)$ after rescaling converge to two independent Gaussian random variables. More explicitly, for any fixed $x_1,x_2,h_1,h_2 \in \realR$ satisfying $x_1<x_2$ and $h_1<h_2$,
\begin{equation}
\label{eq:GaussianLimit_OnePoint}
\prob \left( \frac{2 L^{1/4} \pi(t)}{\sqrt{t(1-t)}} \in (x_1, x_2), \frac{\dL(t) -tL}{\sqrt{t(1-t)}L^{1/4}} \in (h_1,h_2) \ \big| \ \dL(1)=L \right)  \to \int_{x_1}^{x_2} \int_{h_1}^{h_2} \phi(x)\phi(h) \rd h \rd x
\end{equation}
as $L\to\infty$, where $\phi(x):= \frac{1}{\sqrt{2\pi}} e^{-x^2/2}$ is the standard Gaussian density. Later, based on \eqref{eq:GaussianLimit_OnePoint} and a heuristic argument, \cite{Liu-Wang24} conjectured that 
\begin{equation}
\label{eq:conjecture_TwoBrownianBridges}
 \law\left( \left\{2 L^{1/4} \pi(t), L^{-1/4}(\dL(t)-tL)\right\}_{t\in(0,1)}\bigg|\ \dL(1)=L \right) \to  \law \left(\left\{\bbB_1(t), \bbB_2(t)\right\}_{t\in(0,1)}\right)
\end{equation}
as $L\to\infty$, where $\bbB_1$ and $\bbB_2$ are two independent standard Brownian bridges. The Brownian bridge limit for the geodesic location was also suggested for the Brownian polymer model under the upper tail conditioning in an earlier paper \cite{Basu-Ganguly19}. Recently, \cite{Ganguly-Hegde-Zhang23} were able to show that the geodesic location after appropriate scaling indeed converges to a Brownian bridge under the upper tail conditioning for the directed landscape. This settled the first half of the conjecture \eqref{eq:conjecture_TwoBrownianBridges}, while the full conjecture remains an open problem.

\bigskip

\bigskip

While the conjecture \eqref{eq:conjecture_TwoBrownianBridges} and the results \cite{Liu22b,Liu-Wang24,Ganguly-Hegde-Zhang23} mentioned above are mainly about the Gaussian-type fluctuations of the geodesic when the time parameter is of order $O(1)$ under the upper tail conditioning, it is natural to ask about the local behavior of the geodesic under the same conditioning. It is expected, from an analogous result for the KPZ fixed point in \cite{Liu-Zhang25}, that there is a critical scaling window when the time parameter is of order $O(L^{-3/2})$ distance from $0$ or $1$, and the geodesic location fluctuates of order $O(L^{-1})$, conditioned on $\dL(1)=L$ as $L\to\infty$. Our main result in this paper is the following theorem, which provides the limiting joint distribution of the geodesic location and length in this window. 

\begin{thm}
\label{thm:main}
Assume $t>0$ is fixed. For any $x_1,x_2,h_1,h_2\in\realR$ satisfying $x_1<x_2$ and $h_1<h_2$, we have 
\begin{equation}
\label{eq:newmain0}
\prob\left( L \pi (tL^{-3/2}) \in (x_1,x_2), L^{1/2}\dL(tL^{-3/2}) \in (h_1,h_2) \ \big| \ \dL(1)\ge L \right) \to \int_{h_1}^{h_2} \int_{x_1}^{x_2}  \rp(h,x;t) \rd x \rd h,
\end{equation}
and
\begin{equation}
\label{eq:newmain}
\prob\left( L \pi (1-tL^{-3/2}) \in (x_1,x_2), L^{1/2}(L-\dL(1-tL^{-3/2})) \in (h_1,h_2) \ \big| \ \dL(1)\ge L \right) \to \int_{h_1}^{h_2} \int_{x_1}^{x_2} \hat \rp(h,x;t) \rd x \rd h
\end{equation}
as $L\to\infty$. We also have
\begin{equation}
\label{eq:main0}
\prob\left( L \pi (tL^{-3/2}) \in (x_1,x_2), L^{1/2}\dL(tL^{-3/2}) \in (h_1,h_2) \ \big| \ \dL(1)=L \right) \to \int_{h_1}^{h_2} \int_{x_1}^{x_2} \rp(h,x;t) \rd x \rd h,
\end{equation}
and
\begin{equation}
\label{eq:main}
\prob\left( L \pi (1-tL^{-3/2}) \in (x_1,x_2), L^{1/2}(L-\dL(1-tL^{-3/2})) \in (h_1,h_2) \ \big| \ \dL(1)=L \right) \to \int_{h_1}^{h_2} \int_{x_1}^{x_2} \rp(h,x;t) \rd x \rd h,
\end{equation}
as $L\to\infty$. Here  the functions $\hat\rp$ and $\rp$ are explicitly defined in Definition \ref{def:rp}.
\end{thm}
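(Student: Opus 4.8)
The plan is to reduce the problem to an exact-formula computation. The directed landscape restricted to a narrow spatial strip and short time interval is, after the parabolic Airy sheet / last-passage-percolation approximation, governed by explicit Fredholm-determinant formulas; conditioning on $\dL(1)\ge L$ (or $=L$) in the upper-tail regime and then rescaling space by $L$, time by $L^{-3/2}$, and length by $L^{1/2}$ should extract a scaling limit of those formulas. Concretely, I would first use the shift/scaling invariance (Lemma 10.2 of \cite{Dauvergne-Ortmann-Virag22}) to rewrite $\pi(tL^{-3/2})$ and $\dL(tL^{-3/2})$ in terms of a geodesic on the unit-order time scale, so that the event $\{L\pi(tL^{-3/2})\in(x_1,x_2),\ L^{1/2}\dL(tL^{-3/2})\in(h_1,h_2)\}$ becomes a statement about the landscape near its left endpoint at the correct scale. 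The key identity is that conditioning the geodesic to pass near $(x,tL^{-3/2})$ with segment length $\approx h L^{-1/2}$ factorizes the landscape into two independent pieces: an ``initial'' piece $\dL(0,0;\cdot,tL^{-3/2})$ and a ``bulk'' piece $\dL(\cdot,tL^{-3/2};0,1)$, joined through a convolution in the intermediate point. The conditional probability in \eqref{eq:newmain0} is then, by the geodesic characterization \eqref{eq:def_geodesic}, expressible as a ratio: the density (in the intermediate point and the split of the total length) of the maximizer of $\dL(0,0;y,s)+\dL(y,s;0,1)$ evaluated at the rescaled window, divided by the upper-tail probability $\prob(\dL(1)\ge L)$.

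The core technical step is the asymptotic analysis. Here I would invoke the one-point formula for $\dL(0,0;0,1)$ together with its joint law with an interior point, which in the KPZ correspondence is the Airy-sheet two-point function; the relevant exact formula comes from \cite{Liu22b} (it is exactly the formula behind \eqref{eq:GaussianLimit_OnePoint}), and the needed refinement is to keep the dependence on the short-time and small-space parameters rather than sending them to fixed order. Tuning the spectral/contour variables in that formula to the triple scaling $L^{-3/2}:L^{-1}:L^{-1/2}$, the Fredholm determinant should converge to a limiting determinant; the prefactor $\prob(\dL(1)\ge L)\sim c\,L^{-?}e^{-\frac{4}{3}L^{3/2}}$ (the known upper-tail asymptotics, e.g.\ \cite{Das-Dauvergne-Virag24}) cancels the leading exponential decay, leaving the finite limit $\rp(h,x;t)$. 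The appearance of the KPZ-fixed-point upper-tail field of \cite{Liu-Zhang25} is not a coincidence: the same scaling limit of the same class of determinants governs both, so I would define $\rp$ and $\hat\rp$ in Definition \ref{def:rp} precisely as (marginals of) that limiting object, and the large-$t$ degeneration to a product of two Gaussians \eqref{eq:GaussianLimit_OnePoint} would follow from the corresponding large-parameter limit already established there.

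To go from the conditioning $\dL(1)\ge L$ to $\dL(1)=L$ (equations \eqref{eq:main0}--\eqref{eq:main}), I would differentiate in $L$: both the numerator (joint density) and denominator $\prob(\dL(1)\ge L)$ are smooth in $L$ with controlled derivatives in the relevant range, so an integration-by-parts / Laplace-type argument shows the conditional law given $\{\dL(1)=L\}$ has the same limit as given $\{\dL(1)\ge L\}$ — this is why the right-hand sides of \eqref{eq:newmain0} and \eqref{eq:main0} coincide, and similarly \eqref{eq:newmain}/\eqref{eq:main}. The symmetry $\hat\rp$ versus $\rp$ reflects the time-reversal $t\mapsto 1-t$ of the landscape; under the conditioning $\dL(1)=L$ the two endpoints play symmetric roles, which forces the same function $\rp$ in \eqref{eq:main0} and \eqref{eq:main}, whereas under $\dL(1)\ge L$ the asymmetry of the inequality produces the modified $\hat\rp$ near the right endpoint.

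I expect the main obstacle to be the uniformity of the asymptotic analysis: the Fredholm-determinant formula must be controlled simultaneously as the spatial variable scales like $L^{-1}$, the time variable like $L^{-3/2}$, and the length variable like $L^{1/2}$, and one needs tail bounds (in $x$ and $h$) strong enough to upgrade pointwise convergence of densities to convergence of the probabilities of the boxes $(x_1,x_2)\times(h_1,h_2)$, as well as to justify the differentiation-in-$L$ step. Establishing these uniform estimates on the kernel in the triple-scaling regime — including showing the limiting $\rp$ is a genuine probability density that integrates to $1$ — will be the technical heart of the argument.
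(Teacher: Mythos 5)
Your overall strategy matches the paper's: the proof does start from an exact series-of-contour-integrals formula for the joint density of $(\dL(\tau),\dL(1)-\dL(\tau),\pi(\tau))$ (taken from \cite{Liu22c}, which is indeed the formula behind \eqref{eq:GaussianLimit_OnePoint}), sets $\beta=hL^{-1/2}$, $\beta'=L-hL^{-1/2}$, $\alpha=xL^{-1}$, $\tau=tL^{-3/2}$, and runs a steepest-descent analysis that is lifted almost verbatim from the asymptotics of the two-point tail function of the KPZ fixed point in \cite{Liu-Zhang25}; the only structural difference between the two integrands is an extra polynomial factor $\rH$, which does not disturb the analysis. Dividing by $\fGUE(L)\sim e^{-\frac43L^{3/2}}/(8\pi L)$ cancels the exponential decay and produces exactly the series defining $\rp$ in Definition \ref{def:rp}. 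So your identification of the mechanism, of the relevant exact formula, and of the main technical burden (uniform estimates in the triple scaling, plus the separate issue that $\rp$ integrates to $1$, which the paper handles probabilistically in Proposition \ref{prop:joint_density}) is accurate.

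The one step where your reasoning has a genuine gap is the passage between the conditionings $\{\dL(1)\ge L\}$ and $\{\dL(1)=L\}$. You propose to prove the $\ge L$ statements first and then ``differentiate in $L$,'' asserting that the two conditionings yield the same limit. If that were true as stated it would force $\hat\rp=\rp$, contradicting \eqref{eq:newmain} versus \eqref{eq:main}; you do notice the asymmetry at the right endpoint but attribute it only vaguely to ``the asymmetry of the inequality,'' which is not an argument. The paper goes in the opposite direction, which is both cleaner and explains the discrepancy: \eqref{eq:main0} is proved directly (the exact formula is a density, so the $=L$ conditioning is the natural starting point), \eqref{eq:main} follows by the flip symmetry of the landscape, and the $\ge L$ statements are obtained by writing the conditional probability as a mixture over $\dL(1)=L+yL^{-1/2}$ with limiting weight $2e^{-2y}\,\rd y$ (from \eqref{eq:GUE_asymp}). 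The left-endpoint observable $L^{1/2}\dL(tL^{-3/2})$ is insensitive to the $O(L^{-1/2})$ perturbation of $L$, so the mixture collapses to $\rp$; the right-endpoint observable $L^{1/2}(L-\dL(1-tL^{-3/2}))$ shifts by exactly $y$, producing $\hat\rp(h,x;t)=\int_0^\infty\rp(h+y,x;t)\,2e^{-2y}\rd y$. Your differentiation route could perhaps be repaired with uniform-in-$L$ control of the derivative of the conditional probability, but as written it does not account for which observables feel the $O(L^{-1/2})$ fluctuation of the conditioning level.
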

\begin{rmk}
Note that the limits in \eqref{eq:newmain0} and \eqref{eq:newmain} are different. Heuristically, this difference naturally arises from the sensitivity of the two variables, $L^{1/2}\dL(tL^{-3/2})$ and $L^{1/2}(L-\dL(1-tL^{-3/2}))$, to an $O(L^{-1/2})$ perturbation of $L$. Such a perturbation is not visible in $L^{1/2}\dL(tL^{-3/2})$, but makes an $O(1)$ difference in $L^{1/2}(L-\dL(1-tL^{-3/2}))$ when $L$ becomes large. 

On the other hand, since $(\pi(s),\dL(s), \dL(1))$ has the same law as $(\pi(1-s),\dL(1)-\dL(1-s), \dL(1))$, the right hand sides of \eqref{eq:main0} and \eqref{eq:main} are the same.
\end{rmk}

We remark that both functions $\hat \rp$ and $\rp$ are joint probability density functions, as stated in the Proposition \ref{prop:joint_density} below. We do not have a direct proof of this proposition using the formulas of these two functions. Instead, we will provide a proof mainly using probabilistic arguments. The proof   is given in Section \ref{sec:proof_density}. 
\begin{prop}
    \label{prop:joint_density}
    Both functions $\hat \rp$ and $\rp$ are joint probability density functions for each $t>0$, i.e., $\hat\rp(h,x;t)\ge 0$ and $\rp(h,x;t)\ge 0$ for all $(h,x)\in\realR^2$, and 
\begin{equation}
\int_{\realR} \int_{\realR}\hat\rp(h,x;t)\rd x\rd h= \int_{\realR} \int_{\realR} \rp(h,x;t)\rd x \rd h =1.
\end{equation}
\end{prop}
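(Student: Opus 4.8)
The plan is to avoid manipulating the explicit formulas for $\rp$ and $\hat\rp$ altogether, and instead deduce the density property directly from Theorem~\ref{thm:main}, exploiting that these functions arise as limits of honest conditional probabilities. Nonnegativity is immediate once one knows the limits are genuine distributional limits: for any rectangle $(x_1,x_2)\times(h_1,h_2)$ the left-hand side of \eqref{eq:newmain0} (or \eqref{eq:main0}, etc.) is a probability, hence nonnegative, so its limit $\int_{h_1}^{h_2}\int_{x_1}^{x_2}\rp\,\rd x\,\rd h\ge 0$; letting the rectangle shrink around an arbitrary point $(h,x)$ and using continuity of $\rp$ (which I would note follows from the explicit formula, or from a Lebesgue differentiation argument) gives $\rp(h,x;t)\ge 0$ pointwise, and similarly for $\hat\rp$. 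So the substance of the proposition is the total-mass statement $\iint \rp = \iint \hat\rp = 1$.

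For the total mass, the natural route is a tightness-plus-Portmanteau argument. Fix $t>0$ and consider the pair of rescaled random variables $\big(L\pi(tL^{-3/2}),\,L^{1/2}\dL(tL^{-3/2})\big)$ under the conditional law $\prob(\,\cdot \mid \dL(1)\ge L)$ (and separately under $\prob(\,\cdot\mid\dL(1)=L)$, and the near-$1$ analogues). Theorem~\ref{thm:main} already tells us that the finite-dimensional rectangle probabilities converge to $\iint_{\text{rect}}\rp$. If I can additionally show that this family of conditional laws is \emph{tight} as $L\to\infty$, then along any subsequence it converges weakly to some probability measure whose rectangle masses agree with those of $\rp(h,x;t)\,\rd x\,\rd h$; since rectangles are a convergence-determining class, the subsequential limit \emph{is} $\rp(h,x;t)\,\rd x\,\rd h$, and being a probability measure it has total mass $1$. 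The same argument applied to the near-endpoint-$1$ pair with the $\ge L$ conditioning yields $\iint\hat\rp = 1$.

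The key step — and the main obstacle — is therefore establishing tightness of $\big(L\pi(tL^{-3/2}),\,L^{1/2}\dL(tL^{-3/2})\big)$, i.e.\ ruling out escape of mass to infinity as $L\to\infty$. For the geodesic-location coordinate $L\pi(tL^{-3/2})$ one expects this to follow from the known transversal fluctuation bounds for geodesics in the directed landscape under the upper-tail conditioning (the $O(L^{-1/4})$ strip estimates of \cite{Liu22b,Ganguly-Hegde-Zhang23}, localized near the endpoint via the scaling relations of Lemma~10.2 in \cite{Dauvergne-Ortmann-Virag22}), together with the parabolic a priori bounds on $\dL$; concretely one wants $\prob\big(|L\pi(tL^{-3/2})|>M \mid \dL(1)\ge L\big)\to 0$ uniformly as $M\to\infty$, and likewise a one-sided-plus-two-sided control on $L^{1/2}\dL(tL^{-3/2})$ using the reverse triangle inequality to pin the segment length between $\dL(0,0;\pi(tL^{-3/2}),tL^{-3/2})$ and $\dL(1) - \dL(\pi(tL^{-3/2}),tL^{-3/2};0,1)$. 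I expect the lower-tail control on $\dL(tL^{-3/2})$ (preventing it from being very negative) to be the trickiest piece, since it requires that the geodesic does not make an atypically costly excursion near the starting point; this can be handled with the standard one-point lower-tail estimates for $\dL$ combined with a union bound over a discretization of the near-endpoint region, or alternatively extracted from the inputs already used to prove Theorem~\ref{thm:main}. Once tightness is in hand, everything else is soft.

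One caveat to record in the write-up: the four limiting objects in Theorem~\ref{thm:main} are $\rp$ (appearing in \eqref{eq:newmain0}, \eqref{eq:main0}, \eqref{eq:main}) and $\hat\rp$ (appearing in \eqref{eq:newmain}), so the tightness argument must be run for the specific $(\text{conditioning},\text{near-endpoint})$ combinations that produce each function; since the $\ge L$ and $=L$ conditionings at endpoint $0$ both yield $\rp$, it suffices to carry out the argument once for $\prob(\,\cdot\mid\dL(1)\ge L)$ near $0$ (giving $\iint\rp=1$) and once for $\prob(\,\cdot\mid\dL(1)\ge L)$ near $1$ (giving $\iint\hat\rp=1$), which is what I would write up in Section~\ref{sec:proof_density}.
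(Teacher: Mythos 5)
Your overall architecture coincides with the paper's: nonnegativity is read off from Theorem \ref{thm:main} as a limit of probabilities, the inequality $\iint \rp \le 1$ is soft because every rectangle mass is a limit of probabilities bounded by $1$, and the entire content of the proposition is the no-escape-of-mass (tightness) statement for the pair $\bigl(L\pi(tL^{-3/2}), L^{1/2}\dL(tL^{-3/2})\bigr)$ under $\prob(\,\cdot\mid \dL(1)\ge L)$. The paper formalizes exactly this as Lemmas \ref{lm:bound_geodesic_location} and \ref{lm:bound_geodesic_length}. One genuine (minor) divergence: for $\iint\hat\rp=1$ you propose rerunning tightness at the endpoint $1$, whereas the paper gets it for free from the identity \eqref{eq:relation_two_densities1}, since integrating $\hat\rp(h,x;t)=\int_0^\infty \rp(h'+h,x;t)\,2e^{-2h'}\rd h'$ over $(h,x)$ immediately transfers total mass $1$ from $\rp$ to $\hat\rp$. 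Your route would work but doubles the labor.

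The genuine gap is that you have identified the hard step but not supplied it, and the ingredients you gesture at are not the ones that actually close it. The $O(L^{-1/4})$ transversal-fluctuation results of \cite{Liu22b,Ganguly-Hegde-Zhang23} concern macroscopic times under the upper-tail conditioning; they do not localize, via the scaling relations of \cite{Dauvergne-Ortmann-Virag22}, to a bound of order $L^{-1}$ on $\pi(tL^{-3/2})$, because the conditioning $\{\dL(1)\ge L\}$ is not invariant under the $1{:}2{:}3$ rescaling that would zoom into the window $(O(L^{-1}), O(L^{-3/2}))$. What the paper actually does for the location bound is a bespoke comparison argument (Lemma \ref{lm:simple_lemma}): if the geodesic reaches $m$ by time $\tau$ it must cross the geodesic of $\pi_{m,0;m,\tau}$, which via a monotonicity of increments reduces the event to a product of an unconditional upper-tail probability (controlled by the Airy$_2$ two-point decorrelation as $M_1\to\infty$) and a conditional probability controlled by the upper tail field convergence \eqref{eq:UpperTailFieldConvergence} of \cite{Liu-Zhang25} together with the explicit Brownian description of $\UTfield(\cdot,0)$. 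Similarly, your "union bound over a discretization" for the lower-tail control of $\dL(tL^{-3/2})$ is replaced in the paper by Lemma \ref{lm:max_Airy}, which bounds $\prob(\max_{|y|\le m}\dL(y,\tau;0,1)\ge \ell)$ by a ratio of GUE tail probabilities using independence of the two time slabs, and the precise asymptotics \eqref{eq:GUE_asymp}. Without these (or equivalent) quantitative inputs, the tightness claim — which the paper itself singles out as the most challenging part of the article — remains unproven, so the proposal as written does not yet constitute a proof.
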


\bigskip

Using the formulas of $\hat\rp$ and $\rp$, we are able to prove the following properties of these two joint probability density functions.

\begin{prop}
\label{prop:properties_p}
The functions $\hat \rp$ and $\rp$ have the following properties.
\begin{enumerate}[(1)]

\item Both functions are symmetric in $x$: $ \hat \rp(h,x;t) =\hat \rp(h,-x;t) $, and $\rp(h,x;t) =\rp(h,-x;t)$.

\item The following identities hold for any fixed $(h,x)\in\realR^2$ and $t>0$, 
\begin{equation}
\label{eq:relation_two_densities1}
\hat\rp(h,x;t) = \int_0^\infty \rp(h'+ h,x;t)\cdot 2e^{-2h'}\rd h', 
\end{equation}
and
\begin{equation}
\label{eq:relation_two_densities2}
 \rp(h,x;t) = -e^{2h} \frac{\partial}{\partial h} \left(\frac12e^{-2h} \hat\rp(h,x;t)\right).
\end{equation}

As a corollary, if $\rp(h,x;t)$ is the joint density of a pair of random variables $H$ and $X$, and $\hat \rp(h,x;t)$ is the joint density of $\hat H$ and $\hat X$, then $(\hat H,\hat X) \stackrel{d}{=}  (H, X) - (E,0)$, where $E$ is an exponential random variable with parameter $2$ that is independent of $H$ and $X$, and $\dequal$ denotes an equation in law.

\item When the time parameter goes to infinity, the joint density function $\rp$ converges to the product of two standard Gaussian densities after appropriate scaling
\begin{equation}
\label{eq:large_time_asymptotics}
\lim_{t\to\infty} \frac{t}{2} \rp\left(t + h\sqrt{t}, \frac{1}{2}x\sqrt{t}; t\right) = \phi(h)\phi(x),\quad (h,x)\in\realR^2,
\end{equation}
where $\phi$ is the standard Gaussian density function. The same statement holds for $\hat\rp$.
\item For any fixed $x\in\realR$ and $t>0$, we have the following approximations of the joint density functions $\rp$ and $\hat\rp$
\begin{equation}
\label{eq:right_tail_rp_h}
\begin{split}
    \rp(h,x;t) &= \frac{1}{4\pi t}e^{-\frac{4}{3}\frac{\left( h + \frac{x^2}{t}\right)^{3/2}}{t^{1/2}} +2h - \frac{2}{3}t}(1+o(1)),\\
    \hat\rp(h,x;t) &= \frac{1}{4\pi (ht)^{1/2}}e^{-\frac{4}{3}\frac{\left( h + \frac{x^2}{t}\right)^{3/2}}{t^{1/2}} +2h - \frac{2}{3}t}(1+o(1))
    \end{split}
\end{equation}
as $h\to\infty$. Moreover, for any fixed $h\in\realR$ and $t>0$, we have
\begin{equation}
\label{eq:right_tail_rp_x}
\begin{split}
\rp(h,x;t)&= \frac{1}{2\pi |x|}e^{-\frac{4}{3}\frac{\left( h + \frac{x^2}{t}\right)^{3/2}}{t^{1/2}} + 2h - \frac{2}{3}t}(1+o(1)),\\
\hat\rp(h,x;t)&= \frac{t}{2\pi x^2}e^{-\frac{4}{3}\frac{\left( h + \frac{x^2}{t}\right)^{3/2}}{t^{1/2}} + 2h - \frac{2}{3}t}(1+o(1))
\end{split}
\end{equation}
as $|x|\to\infty$. 

\end{enumerate}
\end{prop}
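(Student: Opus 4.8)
The plan is to derive all four items directly from the explicit formulas for $\rp$ and $\hat\rp$ in Definition~\ref{def:rp}, organizing the argument so that the identity \eqref{eq:relation_two_densities1} acts as the bridge transferring statements about $\rp$ to statements about $\hat\rp$. For item (1), I would read off from the formula that $x$ enters only through even quantities, giving $\rp(h,x;t)=\rp(h,-x;t)$ and $\hat\rp(h,x;t)=\hat\rp(h,-x;t)$; this is also forced probabilistically by the reflection invariance $\dL(x,0;y,1)\dequal\dL(-x,0;-y,1)$, under which $\pi(t)\mapsto-\pi(t)$ while $\dL(t)$ is unchanged. For item (2), the substantive point is \eqref{eq:relation_two_densities1}, which I would check by directly comparing the two formulas; conceptually it reflects the fact that conditioning on $\dL(1)\ge L$ rather than $\dL(1)=L$ replaces $L$ by $L+L^{-1/2}E$ with $E$ an independent $\mathrm{Exp}(2)$ variable (from $\prob(\dL(1)\ge L+L^{-1/2}s)/\prob(\dL(1)\ge L)\to e^{-2s}$, itself coming from $(L+L^{-1/2}s)^{3/2}-L^{3/2}\sim \frac{3}{2}s$ and the $e^{-\frac43 r^{3/2}}$ upper tail of $\dL(1)$), and only the endpoint quantity $L^{1/2}(L-\dL(1-tL^{-3/2}))$ sees this $O(L^{-1/2})$ shift. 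Given \eqref{eq:relation_two_densities1}, rewriting it as $\frac12 e^{-2h}\hat\rp(h,x;t)=\int_h^\infty \rp(u,x;t)e^{-2u}\,\rd u$ and differentiating in $h$ yields \eqref{eq:relation_two_densities2}, and the stated corollary is just the observation that \eqref{eq:relation_two_densities1} is the density of $(H-E,X)$ with $E\perp(H,X)$, $E\sim\mathrm{Exp}(2)$.

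For item (3) I would carry out a steepest-descent analysis of the integral representation of $\rp$ from Definition~\ref{def:rp} under the scaling $h\mapsto t+h\sqrt t$, $x\mapsto \frac12 x\sqrt t$, expecting the relevant phase to have a non-degenerate critical point whose quadratic expansion produces the two Gaussian densities together with the normalizing (Jacobian) factor $t/2$, with the Airy-type factors contributing only lower-order corrections; the resulting scaling is precisely the one obtained by specializing \eqref{eq:GaussianLimit_OnePoint} to time $tL^{-3/2}$ and letting $t\to\infty$, which is a good consistency check. The statement for $\hat\rp$ then follows from \eqref{eq:relation_two_densities1}: since $h'+(t+h\sqrt t)=t+(h+h'/\sqrt t)\sqrt t$, the integral $\int_0^\infty \frac{t}{2}\rp(t+(h+h'/\sqrt t)\sqrt t,\frac12 x\sqrt t;t)\,2e^{-2h'}\,\rd h'$ converges, by dominated convergence with the domination supplied by the tail bounds of item (4), to $\phi(h)\phi(x)\int_0^\infty 2e^{-2h'}\,\rd h'=\phi(h)\phi(x)$.

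For item (4) I would again use saddle-point asymptotics of the Definition~\ref{def:rp} formula for $\rp$: as $h\to\infty$, respectively $|x|\to\infty$, the relevant critical point pinches a branch point, producing the exponent $-\frac43(h+x^2/t)^{3/2}/t^{1/2}+2h-\frac23 t$ together with the prefactor $\frac1{4\pi t}$, respectively $\frac1{2\pi|x|}$. I would then deduce the $\hat\rp$ asymptotics from \eqref{eq:relation_two_densities1} rather than from scratch: substituting the asymptotics of $\rp$ into \eqref{eq:relation_two_densities1}, the factors $e^{2h'}$ and $2e^{-2h'}$ cancel and one is left with $\int_0^\infty e^{-\frac43(h'+h+x^2/t)^{3/2}/t^{1/2}}\,\rd h'$, which by Watson's lemma is asymptotic to $\sqrt t/(2(h+x^2/t)^{1/2})\sim \sqrt t/(2\sqrt h)$ as $h\to\infty$ and to $t/(2|x|)$ as $|x|\to\infty$; multiplying by the $\rp$-prefactors turns these into $\frac1{4\pi(ht)^{1/2}}$ and $\frac{t}{2\pi x^2}$, as claimed. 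The genuinely delicate part of the whole proof is this asymptotic analysis for items (3) and (4): choosing steepest-descent contours for the integral of Definition~\ref{def:rp} that are valid uniformly over the relevant parameter ranges, isolating which critical and branch points contribute, and controlling the error terms — in particular verifying for (3) that the Airy factors do not disturb the Gaussian limit, and for (4) that the local analysis at the branch point produces exactly the claimed polynomial prefactor, including its dependence on whether $h$ or $x$ is the large parameter. By contrast, item (1), the relation \eqref{eq:relation_two_densities2}, and the corollary are routine once the formula for $\rp$ and the identity \eqref{eq:relation_two_densities1} are in hand.
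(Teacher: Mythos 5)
Your overall plan coincides with the paper's for items (1)--(3): both verify (1) and (2) directly from the formulas in Definition \ref{def:rp} (with (2) serving as the bridge from $\rp$ to $\hat\rp$), and both prove (3) by steepest descent with the $n=1$ term of the series dominating and the quadratic expansion at the saddles $w=\pm1$ producing the two Gaussians. For item (4) your route to the $\hat\rp$ asymptotics is genuinely different and arguably cleaner: you feed the $\rp$ asymptotics into \eqref{eq:relation_two_densities1} and apply Watson's lemma, whereas the paper redoes the saddle-point analysis on $\hat\rI_n$ and observes that the extra factor $2/\sum_i(v_i-u_i)$ evaluates to $2/(w_c^+-w_c^-)=t^{1/3}/M$ at the saddles, giving $\hat\rp\approx (t^{1/3}/M)\,\rp$. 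Your computation yields the same prefactors (provided you use the correct $\rp$-prefactor in each regime, $\tfrac1{4\pi t}$ versus $\tfrac1{2\pi|x|}$, which your wording suggests you do), at the cost of needing the $\rp$ asymptotics uniformly in the shifted argument $h'+h$, $h'\ge 0$; the paper's route avoids that uniformity issue.

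Three imprecisions are worth correcting. First, in (1) the claim that ``$x$ enters only through even quantities'' is not right as stated: the $x$-dependence of \eqref{eq:rI_explicit} is through $e^{x(\sum u_i^2-\sum v_i^2)}$, and the symmetry comes from the substitution $u_i=-v_i'$, $v_i=-u_i'$ (which swaps and reflects the contours and flips the sign of $\sum u_i^2-\sum v_i^2$), as the paper does; your probabilistic fallback via reflection invariance does work. Second, in (4) nothing ``pinches a branch point'': the integrand is meromorphic, and the exponent $-\tfrac43(h+x^2/t)^{3/2}t^{-1/2}$ arises simply from evaluating the cubic phase at the moving saddles $w_c^\pm=x/t\pm t^{-1/3}M$. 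Third, and most substantively, the structural backbone of the asymptotics in (3) and (4) --- which your proposal leaves implicit --- is the truncation of the series over $n$: one must show the $n\ge2$ terms are exponentially smaller, and in (4) this requires tracking the residues at $u_i=-1$ and $v_j=1$ picked up when deforming $\Gamma_\rL,\Gamma_\rR$ to the saddles (the poles are crossed when $x$ is fixed and $h\to\infty$ but not when $h$ is fixed and $|x|\to\infty$), together with the identities $\rH(u,1;v,-1)=2(1-v)(1+u)(u-v)$ and $\rH(u,-1,1;v,1,-1)=0$ that kill the doubly-resonant residue term. Without this the error control you defer to is not merely technical.
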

The proof of Proposition \ref{prop:properties_p} is given in Section \ref{sec:proof_proposition2}.

\begin{rmk}
     The appearance of the exponential random variable $E$ in Proposition \ref{prop:properties_p} (2) coincides with $\LUTfield(0,0)$, where $\LUTfield$ is called the upper tail field, a random field on $\realR^2$ defined in \cite{Liu-Zhang25}. The upper tail field is a limiting local field of the KPZ fixed point within the same conditioning and the same rescaling window as in this paper. See equations \eqref{eq:UpperTailFieldConvergence}, \eqref{eq:UpperTailFieldConvergence1}, and \eqref{eq:relation_two_UTfields}, and the related discussions in Section \ref{sec:KPZ}.
\end{rmk}

\begin{rmk}
    Heuristically, the asymptotics \eqref{eq:large_time_asymptotics} is consistent with  \eqref{eq:GaussianLimit_OnePoint} and \eqref{eq:conjecture_TwoBrownianBridges}. In fact, if we formally write 
    \begin{equation}
        \pi(\tau)\approx \frac12L^{-1/4}\bbB_1(\tau),\quad \dL(\tau)\approx \tau L + L^{1/4}\bbB_2(\tau)
    \end{equation}
    conditioned on $\dL(1)=L$, where $\bbB_1,\bbB_2$ are two independent Brownian bridges in \eqref{eq:conjecture_TwoBrownianBridges}, then
    heuristically we have, by choosing $\tau=s\epsilon$ with $\epsilon=tL^{-3/2}$ in the equation above, 
    \begin{equation}
        \begin{split}
            &\left(\bfX_s=\frac{2}{\sqrt{t}}L\pi(stL^{-3/2}),\bfH_s=\frac{1}{\sqrt{t}}\left(L^{1/2}\dL(stL^{-3/2})-st\right)\right) \\
            &
            \approx \left(\bfB_1(s)\approx\epsilon^{-1/2}\bbB_1(s\epsilon),  \bfB_2(s)\approx\epsilon^{-1/2}\bbB_2(s\epsilon)\right),
        \end{split}
    \end{equation}
    where $\bfB_1$ and $\bfB_2$ are two independent Brownian motions. When $s=1$, it is consistent with \eqref{eq:large_time_asymptotics}.
\end{rmk}

\subsection{Definition of the joint density functions $\hat\rp$ and $\rp$}
\label{sec:Def_rp}

\bigskip

The functions $\hat\rp$ and $\rp$ are defined as series expansions, with each term expressed as contour integrals on the complex plane $\complexC$. We first introduce these contours. Let  $\Gamma_\rL$ be a contour on the left half plane which starts from $\infty e^{-2\pi\rmi/3}$, and passes a finite point on the interval $(-1,0)$, then goes to $\infty e^{2\pi\rmi/3}$. We denote $\Gamma_\rR = -\Gamma_\rL$ with an orientation from $\infty e^{-\pi\rmi/3}$ to $\infty e^{\pi\rmi/3}$. See Figure \ref{fig:Gamma_Contours} for an illustration.
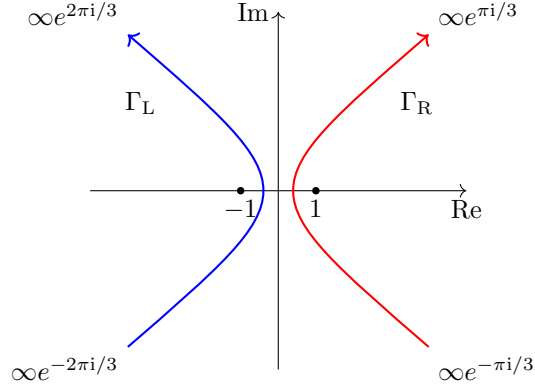
\begin{figure}
\centering
\begin{tikzpicture}[
    x=0.5cm, y=0.3cm 
    ]
    \def\R{8} 
    \def\angle{120} 
    
    \path 
        ({\R * cos(\angle)}, {-\R * sin(\angle)}) coordinate (A) 
        ({\R * cos(\angle)}, {\R * sin(\angle)}) coordinate (C); 
    \path 
        ({\R * cos(60)}, {-\R * sin(60)}) coordinate (D) 
        ({\R * cos(60)}, {\R * sin(60)}) coordinate (F); 
        
    \draw[->] ({\R * cos(\angle) - 1}, 0) -- ({-\R * cos(\angle) + 1}, 0) node[below] {Re};
    \draw[->] (0, {-\R * sin(\angle) - 1}) -- (0, {\R * sin(\angle) + 1}) node[left] {Im};
    
    \draw[blue, thick, ->] 
        (A) 
        .. controls (0.8, 0) .. (C) 
        node[midway, right, black] {}; 
    \draw[red, thick, ->] 
        (D) 
        .. controls (-0.8,0) .. (F) 
        node[midway, right, black] {}; 

    \node[below left] at (A) {$\infty e^{-2\pi \rmi/3}$};
    \node[above left] at (C) {$\infty e^{2\pi \rmi/3}$};
    \node[above left] at (-3,3) {$\Gamma_\rL$}; 
       \node[below right] at (D) {$\infty e^{-\pi \rmi/3}$};
       \node[above right] at (3,3) {$\Gamma_\rR$};
       \node[below] at (-1,0)  {$-1$};
       \node[circle,fill,inner sep=1pt] at (-1,0) {};
       \node[circle,fill,inner sep=1pt] at (1,0) {};
       \node[below] at (1,0) {$1$};
    \node[above right] at (F) {$\infty e^{\pi \rmi/3}$};
\end{tikzpicture}
\caption{Illustration of the contours $\Gamma_\rL$ and $\Gamma_\rR$. Note that the $\Gamma_\rL$ contour lies between the two points $-1$ and $0$, and the $\Gamma_\rR$ contour lies between $0$ and $1$.}
\label{fig:Gamma_Contours}
\end{figure}

We define the following Cauchy determinant 
\begin{equation}
\label{eq:def_CauchyDet}
\rC(\vec{W};\vec{W'}):= \det\left[ \frac{1}{w_i- w'_{j}} \right]_{i,j=1}^n = (-1)^{n(n-1)/2} \frac{ \prod_{1\le i<j\le n} (w_i-w_j)(w'_i-w'_j)}{ \prod_{i=1}^n \prod_{j=1}^n (w_i-w'_j)}
\end{equation}
for all $n\ge 1$ and vectors $\vec{W}=(w_1,\ldots,w_n), \vec{W'}=(w'_1,\ldots, w'_n)\in\complexC^n$ satisfying $w_i\ne w'_j$ for all $1\le i,j\le n$. Note that the dimensions of $\vec{W}$ and $\vec{W'}$ have to be equal in the definition of the Cauchy determinant \eqref{eq:def_CauchyDet}.

Moreover, we introduce the following operation $\sqcup$ of two vectors
\begin{equation}
\label{eq:def_sqcup}
\vec{W} \sqcup \vec{W'}=(w_1,\ldots,w_n,w'_1,\ldots,w'_{n'}) \in\complexC^{n+n'}
\end{equation}
for all $\vec{W}=(w_1,\ldots,w_n)\in \complexC^n$ and $\vec{W'}=(w'_1,\ldots,w'_{n'}) \in\complexC^{n'}$. Especially, when $n'=1$ and $\vec{W'}=(1)$ or $\vec{W'}=(-1)$, we write $\vec{W}\sqcup (1) = (w_1,\ldots,w_n,1)$ and $\vec{W}\sqcup(-1)=(w_1,\ldots,w_n,-1)$.

We also define the following functions of two vectors
\begin{equation}
\label{eq:def_rS}
\rS_k(\vec{W};\vec{W'}):=\sum_{i=1}^n w_i^k -\sum_{i'=1}^{n'}(w'_{i'})^{k}, \quad k=1,2,3
\end{equation}
and 
\begin{equation}
\label{eq:def_rH}
\rH(\vec{W};\vec{W'}):= \frac{1}{12}(\rS_1(\vec{W};\vec{W'}))^4 + \frac{1}{4} (\rS_2(\vec{W};\vec{W'}))^2 -\frac13\rS_1(\vec{W};\vec{W'})\rS_3(\vec{W};\vec{W'}),
\end{equation}
where $n,n'\ge 1$ and $\vec{W}=(w_1,\ldots,w_n)\in\complexC^n$ and $\vec{W'}=(w'_1,\ldots,w'_{n'})\in\complexC^{n'}$.

Finally, we define the following differential operator
\begin{equation}
\label{eq:def_DiffOperator}
\rD=\rD_{h,x;t}:= \frac{1}{12}\partial_h^4 +\frac14\partial_x^2 +\partial_h\partial_t ,
\end{equation}
where $\partial_h$, $\partial_x$ and $\partial_t$ are partial differential operators with respect to the parameters $h, x$ and $t$ respectively.

\begin{defn}
\label{def:rp}

Assume $t>0$. We define the function $\rp(h,x;t)$ and $\hat\rp(h,x;t)$ by the following series expansion formulas
\begin{equation}
\label{eq:def_rp}
\rp(h,x;t) = \sum_{n=1}^\infty \frac{1}{(n!)^2} \int_{\Gamma_\rL^n\times \Gamma_\rR^n}  \rI_n(\vec{U};\vec{V}) \prod_{i=1}^n\frac{\rd u_i}{2\pi\rmi} 
\prod_{i=1}^n\frac{\rd v_i}{2\pi\rmi}
\end{equation}
and
\begin{equation}
\label{eq:def_hat_rp01}
\hat\rp(h,x;t) = \int_0^\infty \rp(h'+ h,x;t)\cdot 2e^{-2h'}\rd h',
\end{equation} or equivalently, by changing the order of integration and summation,
\begin{equation}
\label{eq:def_hat_rp}
\hat \rp(h,x;t) =  \sum_{n=1}^\infty \frac{1}{(n!)^2} \int_{\Gamma_\rL^n\times \Gamma_\rR^n} \hat \rI_n(\vec{U};\vec{V}) \prod_{i=1}^n\frac{\rd u_i}{2\pi\rmi} 
\prod_{i=1}^n\frac{\rd v_i}{2\pi\rmi}.
\end{equation}
The integrands $\rI_n(\vec{U};\vec{V})$ and $\hat\rI_n(\vec{U};\vec{V})$ for $\vec{U}=(u_1,\ldots,u_n)\in\complexC^n$ and $\vec{V}=(v_1,\ldots,v_n)\in\complexC^n$ are defined by
\begin{equation}
\label{eq:def_rI}
\rI_n(\vec{U};\vec{V}) =-2\cdot \rC(\vec{U};\vec{V})\cdot \rC(\vec{V}\sqcup(-1);\vec{U}\sqcup(1))\cdot \rD\left( e^{-\frac{2}{3}t+2h}\prod_{\ell=1}^n \frac{f_{h,x;t}(u_\ell)}{f_{h,x;t}(v_\ell)}\right)
\end{equation}
and
\begin{equation}
\label{eq:def_hat_rI}
 \hat\rI_n(\vec{U};\vec{V})= \frac{2}{\sum_{i=1}^n (-u_i+v_i)}\cdot \rI_n(\vec{U};\vec{V}),
\end{equation}
where $\rC$ denotes the Cauchy determinant as defined in \eqref{eq:def_CauchyDet}, $\rD$ denotes the differential operator defined in \eqref{eq:def_DiffOperator}, and the function
\begin{equation}
\label{eq:def_f}
f_{h,x;t}(w):= e^{-\frac{1}{3}tw^3 +xw^2 +hw},\quad w\in\complexC.
\end{equation}
\end{defn}

Using the definitions of $\rC$ and $\rD$ in \eqref{eq:def_CauchyDet} and $\eqref{eq:def_DiffOperator}$, it is straightforward to write down $\rI_n(\vec{U};\vec{V})$ explicitly. We have
\begin{equation}
\label{eq:rI_explicit}
\begin{split}
&\rI_n(\vec{U};\vec{V})\\
& = (-1)^{n}e^{-\frac{2}{3}t+2h}
\frac{\prod_{1\le i<j\le n}(u_i-u_j)^2(v_i-v_j)^2}{\prod_{i=1}^n\prod_{j=1}^n (u_i-v_j)^2}  \cdot \prod_{i=1}^n \frac{(1+v_i)(1-u_i)f_{h,x;t}(u_i)}{(1+u_i)(1-v_i)f_{h,x;t}(v_i)}  \cdot \rH(\vec{U}\sqcup(1);\vec{V}\sqcup(-1)),
\end{split}
\end{equation}
where the function $\rH$ is defined in \eqref{eq:def_rH}, more explicitly,
\begin{equation}
\begin{split}
    &\rH(\vec{U}\sqcup(1);\vec{V}\sqcup(-1))\\
    &=\frac{1}{12}\left(2+\sum_{i=1}^n (u_i-v_i)\right)^4 +\frac14\left(\sum_{i=1}^n(u_i^2-v_i^2)\right)^2 -\frac13\left(2+\sum_{i=1}^n (u_i-v_i)\right)\left(2+\sum_{i=1}^n (u_i^3-v_i^3)\right).
    \end{split}
\end{equation}

Note the angles of the contours $\Gamma_\rL$ and $\Gamma_\rR$. We see that the function $f_{h,x;t}(w)$ decays super-exponentially fast when $w\in\Gamma_\rL$ goes to infinity, and grows super-exponentially fast when $w\in\Gamma_\rR$ goes to infinity. Therefore, it is straightforward to see that the integral of $\rI_n(\vec{U};\vec{V})$ is well defined, and the summation in \eqref{eq:def_rp} is uniformly bounded. We also note that $|\hat\rI_n(\vec{U};\vec{V})|\le \text{constant}\cdot|\rI_n(\vec{U};\vec{V})|$ by the choice of the contours. Therefore the integral of $\hat\rI_n(\vec{U};\vec{V})$ is well defined and the summation in \eqref{eq:def_hat_rp} is uniformly bounded as well. It further verifies the two formulas \eqref{eq:def_hat_rp01} and \eqref{eq:def_hat_rp} are equivalent. The absolute convergences of similar expressions have been discussed in \cite{Liu22c,Liu-Zhang25}, hence we omit the details here and refer the readers to the discussions after Definition 2.2 in \cite{Liu-Zhang25}.

\begin{figure}
\includegraphics[scale=0.28]{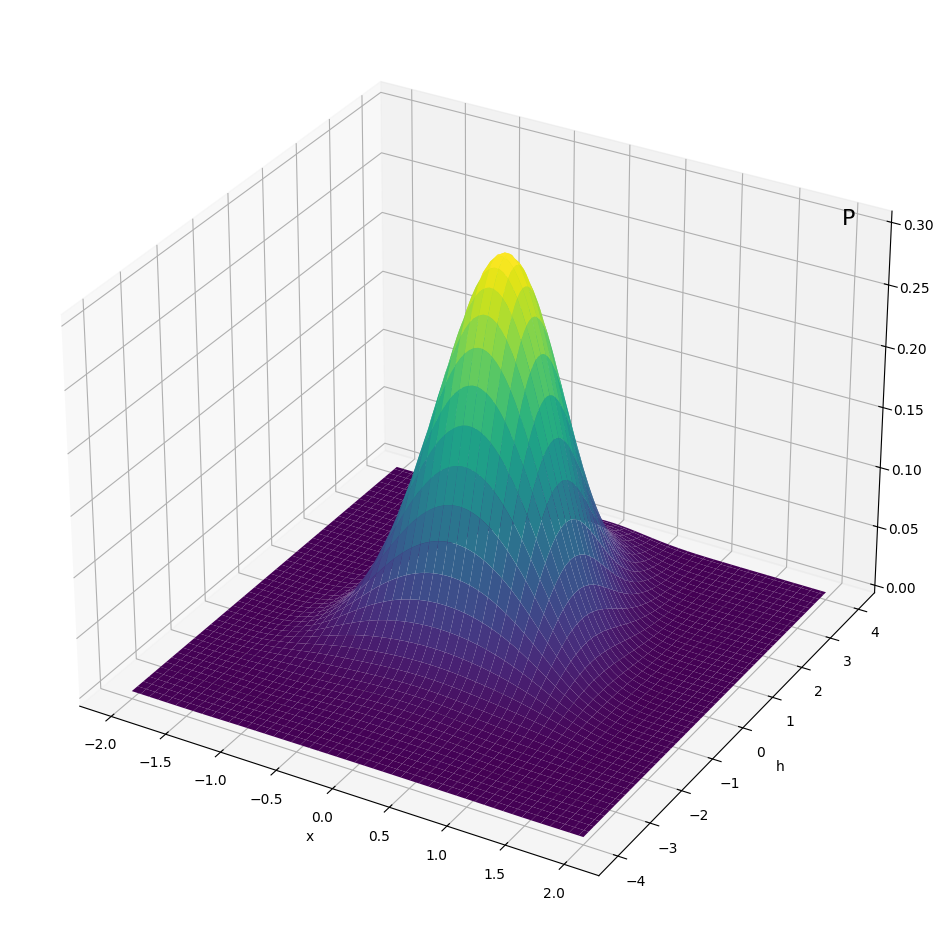}
\includegraphics[scale=0.28]{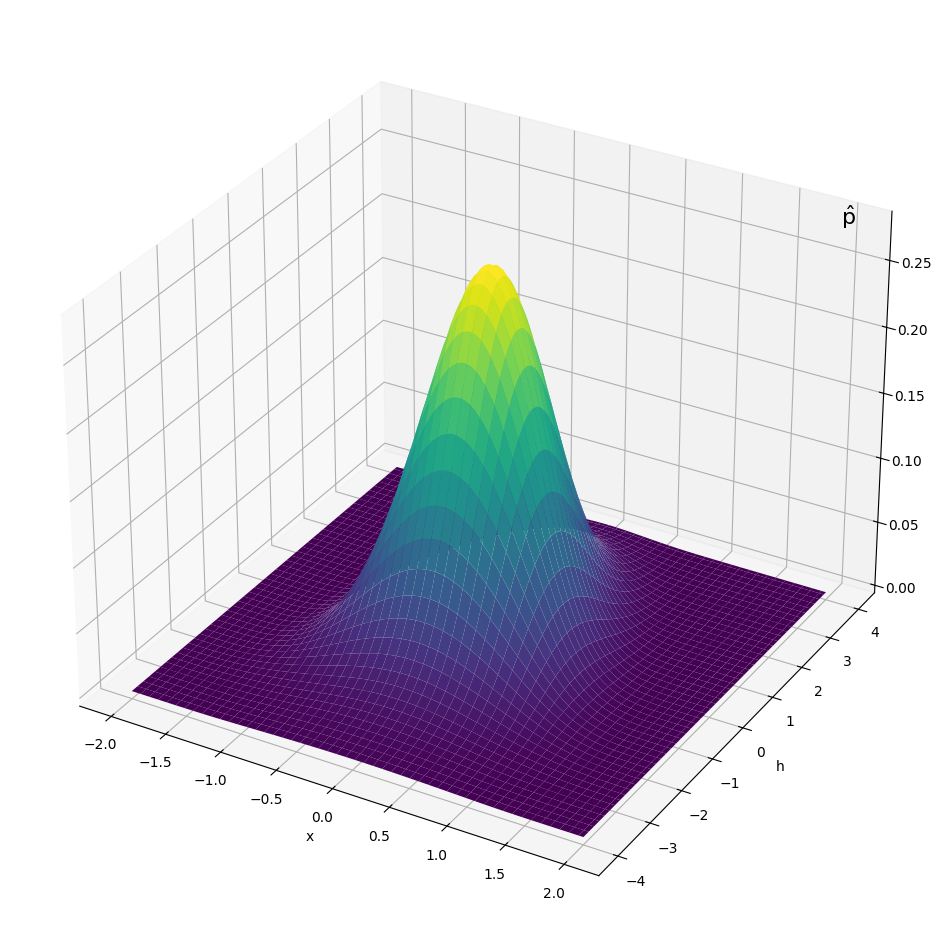}
\caption{The two figures above are approximations of the joint density functions $\rp(h,x;t=1)$ and $\hat\rp(h,x;t=1)$, respectively. Due to the complexity of their formulas, only the first term was numerically computed, with the remaining terms omitted.}
\label{fig:density_functions}
\end{figure}

\subsection{Relations to the KPZ fixed point with the upper tail conditioning}

\label{sec:KPZ}

 The KPZ fixed point $\hKPZ(x,t)$, $(x,t)\in\realR\times\realR_{>0}$, which was first constructed by \cite{Matetski-Quastel-Remenik21}, is a universal limit of the space-time height function of the Kardar-Parisi-Zhang universality class \cite{Nica-Quastel-Remenik20,Quastel-Sarkar23,Virag20,Aggarwal-Corwin-Hegde24}. It has the following connection to the directed landscape (see \cite{Nica-Quastel-Remenik20})
\begin{equation}
    \label{eq:relation_KPZ_DL}
\hKPZ (x,t) \stackrel{d}{=} \max_{x'\in\realR} \left\{ \hKPZ (x', 0) + \dL(x',0;x,t) \right\},\quad x\in\realR, \quad t>0,
\end{equation}
where $\hKPZ (x, 0)$ is the initial data of the KPZ fixed point. We will only consider a special initial condition, the narrow-wedge initial condition $\hKPZ(x,0) = -\infty \identity_{x\ne 0}$ in this paper. Thus, we simply use the same notation $\hKPZ(x,t)$ for the KPZ fixed point with the narrow-wedge initial condition throughout the rest of the paper. The equation \eqref{eq:relation_KPZ_DL} becomes 
\begin{equation}
    \label{eq:relation_KPZ_DL2}
    \hKPZ(x,t) \stackrel{d}{=} \dL(0,0;x,t),\quad x\in\realR, \quad t>0.
\end{equation}

Although the geodesics in the directed landscape are determined by the full field $\dL(x,s;y,t)$ by \eqref{eq:def_geodesic}, the relationship between the KPZ fixed point and the directed landscape, given by \eqref{eq:relation_KPZ_DL2}, suggests that the geodesic information may be intricately intertwined with that of the KPZ fixed point in a subtle, non-explicit way. Below, we explain that under the upper tail conditioning, the geodesic in the directed landscape is heuristically the same as the argmax process of the KPZ fixed point when $t=O(1)$.

In \cite{Liu-Wang24}, the authors considered the KPZ fixed point with the upper tail conditioning and found that 
\begin{multline}
\label{eq:BBlimits}
\law\left(\left\{L^{-1/4} \left( \hKPZ\left(\frac{x}{2L^{1/4}}, t\right) -tL\right) \right\}_{(x,t)\in\realR\times(0,1)} \Big|\ \hKPZ(0,1)=L\right)\\
\fddto \left\{\bbB_2(t) - |\bbB_1(t) -x|\right\}_{(x,t)\in\realR\times(0,1)}
\end{multline}
as $L\to\infty$, where $\bbB_1$ and $\bbB_2$ are two independent standard Brownian bridges, and the $\fddto$ denotes the convergence of finite dimensional distributions. Note that $\hKPZ(0,1)= \dL(1)$. One can easily observe that $\bbB_1$ and $\bbB_2$ are the argmax and max processes of the limiting field \eqref{eq:BBlimits}. Therefore, we heuristically have
\begin{equation}
\left\{\left(\argmax_{x} \hKPZ\left(\frac{x}{2L^{1/4}}, t\right),L^{-1/4}\max_x \left( \hKPZ\left(\frac{x}{2L^{1/4}}, t\right) -tL\right) \right)\Big|\ \hKPZ(0,1)=L\right\}_{t\in(0,1)}
\end{equation}
converges to $\{\bbB_1(t),\bbB_2(t)\}_{t\in(0,1)}$, assuming the left-hand side is well defined.

 Recall that the limiting distribution of $L^{-1/4}(\dL(t)-tL)$ is Gaussian \eqref{eq:GaussianLimit_OnePoint}, and it matches the distribution of $\bbB_2(t)$ for any fixed $t$. Moreover, the limiting distribution of $L^{-1/4}\left(\max_x \hKPZ(x/(2L^{1/4}),t) -tL\right)$ is always greater than or equal to $L^{-1/4}(\dL(t)-tL)$. Therefore, the two random variables should asymptotically match. Based on these observations, \cite[Conjectures 1.10, 1.11]{Liu-Wang24} conjectured that, with the upper tail conditioning, the geodesic $\pi(t)$ and its length $\dL(t)$ are the same as the argmax and the max processes of the KPZ fixed point after appropriate scaling. More explicitly, they conjectured that
\begin{equation}
\label{eq:conjecture}
    \begin{split}
    &\left\{2L^{1/4}\pi (t), L^{-1/4}(\dL(t)-tL)\right\}_{t\in(0,1)},\\
    & \left\{\argmax_{x}  \hKPZ\left(\frac{x}{2L^{1/4}}, t\right),L^{-1/4}\max_x \left( \hKPZ\left(\frac{x}{2L^{1/4}}, t\right) -tL\right)\right\}_{t\in(0,1)}
    \end{split}
\end{equation}
have the same limiting law conditioning on $\dL(1)=\hKPZ(0,1)=L$ as $L\to\infty$, and the limit is given by $\left\{\bbB_1(t),\bbB_2(t)\right\}_{t\in (0,1)}$.

\bigskip

Note that the discussions above are about the limiting behaviors of the geodesic in the directed landscape and the KPZ fixed point when the time parameter $t$ is of order $O(1)$ under the upper tail conditioning. Very recently, it was shown in \cite{Liu-Zhang25} that, with the same upper tail conditioning, there is a critical scaling near the point $(0,1)$ with which the conditional KPZ fixed point converges to a new space time random field that interpolates a Brownian-like field and the KPZ fixed point $\hKPZ$. This new field is called the upper tail field. More explicitly, they showed that 
\begin{equation}
    \label{eq:UpperTailFieldConvergence}
    \law\left( \left\{\sqrt{L}( \hKPZ(x L^{-1}, 1+ tL^{-3/2} ) -L)\right\}_{(x,t)\in\realR^2} \big|\ \hKPZ(0,1)\ge L \right)\fddto \law\left(\left\{ \LUTfield (x,t)\right\}_{(x,t)\in\realR^2}\right)
\end{equation}
and
\begin{equation}
    \label{eq:UpperTailFieldConvergence1}
    \law\left( \left\{\sqrt{L}(\hKPZ(x L^{-1}, 1+ tL^{-3/2} ) -L)\right\}_{(x,t)\in\realR^2} \big|\ \hKPZ(0,1)=L \right)\fddto \law\left(\left\{ \UTfield (x,t)\right\}_{(x,t)\in\realR^2}\right),
\end{equation}
where $\LUTfield$ and $\UTfield$ are called the upper tail fields of the KPZ fixed point. We call $\LUTfield$ the lifted upper tail field and $\UTfield$ the grounded upper tail field. They satisfy the following relation 
\begin{equation}
\label{eq:relation_two_UTfields}
    \LUTfield(x,t) = \UTfield(x,t) + E,
\end{equation}
where $E=\LUTfield(0,0)$ is an exponential random variable with parameter $2$ which is independent of $\UTfield$. Note that $\UTfield(0,0)=0$. The grounded upper tail field also satisfies that $\UTfield (x,0)$ has the same distribution as $\bfB_{\mathrm{ts}}(2x) - 2|x|$, where  $\bfB_{\mathrm{ts}}$ denotes a two-sided Brownian motion with $\bfB_{\mathrm{ts}}(0)=0$.

Recall our main results in Theorem \ref{thm:main} about the limiting geodesic distributions within the same scaling window. By comparing it with the distributions of the lifted upper tail field obtained in \cite{Liu-Zhang25}, we find the following surprising connection.
\begin{thm}
    \label{thm:connection_rp_UTfield}
    For any $h, x\in\realR$ and $t>0$, we have 
    \begin{equation}
        \rp(h,x;t) = -\left(\frac{1}{12}\partial^4_h +\frac{1}{4}\partial^2_x + \partial_t\partial_h\right) \prob\left(\LUTfield(x,-t) \ge -h \right).
    \end{equation}
\end{thm}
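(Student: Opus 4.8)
The plan is to prove the identity by matching the series formula for $\rp(h,x;t)$ against a corresponding series formula for $\prob(\LUTfield(x,-t)\ge -h)$, after applying the differential operator $\rD = \frac{1}{12}\partial_h^4 + \frac14\partial_x^2 + \partial_t\partial_h$. The starting observation is that $\rD$ commutes with the contour integrals in Definition~\ref{def:rp}, so $\rD$ acting on $\rp$ (or rather on each summand) simply acts on the elementary factor $e^{-\frac23 t + 2h}\prod_\ell f_{h,x;t}(u_\ell)/f_{h,x;t}(v_\ell)$. Since $f_{h,x;t}(w) = e^{-\frac13 t w^3 + x w^2 + hw}$, one computes that $\rD$ applied to $e^{-\frac23 t + 2h}\prod_\ell f_{h,x;t}(u_\ell)/f_{h,x;t}(v_\ell)$ produces exactly the polynomial prefactor $\rH(\vec U\sqcup(1);\vec V\sqcup(-1))$ times the same exponential — indeed this is precisely how $\rI_n$ is built in \eqref{eq:def_rI}, with $\rH$ defined in \eqref{eq:def_rH} being the symbol of $\rD$ acting on the exponential (the $\frac1{12}(\rS_1)^4 + \frac14(\rS_2)^2 - \frac13\rS_1\rS_3$ structure is exactly $\rD$ evaluated on $e^{\rS_1 h + \rS_2 x - \frac13 \rS_3 t}$-type factors, using $\rS_1(\vec U\sqcup(1);\vec V\sqcup(-1)) = 2 + \sum(u_i - v_i)$, etc.). So the whole point is that $\rp$ is already, \emph{by construction}, of the form $\rD$ applied to something simpler; I want to identify that "something simpler" with $-\prob(\LUTfield(x,-t)\ge -h)$.

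Concretely, the next step is to extract from \cite{Liu-Zhang25} the explicit series formula for the one-point distribution $\prob(\LUTfield(x,t)\ge -h)$ of the lifted upper tail field (this is the formula underlying the remark after Proposition~\ref{prop:properties_p} and the convergence statement \eqref{eq:UpperTailFieldConvergence}), and compare it term by term with $\sum_n \frac{1}{(n!)^2}\int e^{-\frac23 t + 2h}\prod_\ell f_{h,x;t}(u_\ell)/f_{h,x;t}(v_\ell)\cdot(-2)\rC(\vec U;\vec V)\rC(\vec V\sqcup(-1);\vec U\sqcup(1))$. One needs to track the reflection $t \mapsto -t$ carefully: in $f_{h,x;t}$ the time enters as $-\frac13 t w^3$, so flipping the sign of $t$ conjugates the cubic term, which corresponds to the reflection of the contours $\Gamma_\rL \leftrightarrow -\Gamma_\rR$ or an analogous symmetry; checking that the contour deformation in \cite{Liu-Zhang25} matches ours under $t\mapsto -t$ is a bookkeeping step. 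The $-h$ versus $h$ and the $\ge$ versus density also need care: $\rp$ is a density in $h$, while the right-hand side is (minus) $\rD$ of a tail probability, so heuristically $\rp(h,x;t) = -\partial_? (\text{something})$ should collapse the $\partial_h$ in $\rD$ appropriately — but actually the claim is that \emph{all} of $\rD$ hits the tail probability, meaning the identity is at the level of the pre-integrated kernels, not something obtained by a single integration by parts.

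The main obstacle I expect is \textbf{correctly aligning the two series representations}: the formula for $\prob(\LUTfield(x,t)\ge -h)$ in \cite{Liu-Zhang25} will be written in its own normalization, with its own choice of contours, its own arrangement of Cauchy determinants and possibly an extra $\frac{1}{\rS_1}$-type factor (cf.\ the $\frac{2}{\sum(-u_i+v_i)}$ in \eqref{eq:def_hat_rI}), and reconciling these with \eqref{eq:def_rI} requires identifying which of $\rp$ versus $\hat\rp$ corresponds to the tail probability versus its derivative. The relation \eqref{eq:relation_two_densities2}, $\rp(h,x;t) = -e^{2h}\partial_h(\frac12 e^{-2h}\hat\rp(h,x;t))$, together with \eqref{eq:relation_two_UTfields} ($\LUTfield = \UTfield + E$ with $E$ exponential of rate $2$), strongly suggests that $\hat\rp$ is the "lifted" object and that the $e^{2h}\partial_h e^{-2h}$ manipulation is exactly the operation converting between $\prob(\UTfield \ge -h)$ and $\prob(\LUTfield \ge -h)$ — so a clean route is: (i) establish the identity with $\hat\rp$ and the grounded field $\UTfield$ first, where the kernels should match most directly, then (ii) deduce the $\rp$/$\LUTfield$ version via \eqref{eq:relation_two_densities2} and \eqref{eq:relation_two_UTfields}. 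Once the term-by-term kernel identity is in hand, commuting $\rD$ past the (absolutely convergent, by the discussion after Definition~\ref{def:rp}) sum and integrals is routine, and the theorem follows.
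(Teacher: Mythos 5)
Your proposal is correct and takes essentially the same route as the paper: the paper's proof likewise observes that, by the formulas in \cite{Liu-Zhang25}, $\prob\left(\LUTfield(x,-t)\ge -h\right)$ has exactly the integrand of $\rp(h,-x;t)$ in \eqref{eq:def_rp} minus the operator $\rD$ and with a negative sign, invokes Proposition 2.7 of that paper to commute the differentiation past the sum and integrals, and finishes with the symmetry $\rp(h,x;t)=\rp(h,-x;t)$. Your suggested detour through $\hat\rp$ and the grounded field is not needed, but your primary plan is the paper's argument.
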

\begin{proof}[Proof of Theorem \ref{thm:connection_rp_UTfield}]
The proof is straightforward. The joint tail probabilities of $\LUTfield$ was explicitly formulated in \cite{Liu-Zhang25}. In particular,  $\prob\left(\LUTfield(x,-t) \ge -h \right)$ has exactly the same formula as for $\rp(h,-x;t)$ in \eqref{eq:def_rp}, except that there is no differentiation operator $\rD$ in the integrand \eqref{eq:def_rI}, and a negative sign in front. See the Definition 2.4 and the formulas (2.67) and (2.68) in \cite{Liu-Zhang25}. The Proposition 2.7 in \cite{Liu-Zhang25} claimed that any differentiation of the tail probability functions of the field $\LUTfield$ can be taken inside the summation and integral. Therefore the differentiation $\left(\frac{1}{12}\partial^4_h +\frac{1}{4}\partial^2_x + \partial_t\partial_h\right)$ is converted to the operator $\rD$ in \eqref{eq:def_rI}. Also note that $\rp(h,x;t)=\rp(h,-x;t)$ by Proposition \ref{prop:properties_p}. The desired identity follows immediately.
\end{proof}

\begin{rmk}
\label{rmk:rel2ptKPz}
A similar argument implies $p(\beta,\beta',\alpha;\tau)$, the joint density function of $\dL(\tau), \dL(1)-\dL(\tau)$, and $\pi(\tau)$ defined in Definition \ref{def:density} in the next section, also has a connection to the two-time joint distribution function of the KPZ fixed point with the narrow-wedge initial condition. One can write
\begin{equation}
\begin{split}
    p(\beta,\beta',\alpha;\tau) &= \oint_0\frac{\rd z}{2\pi\rmi (1-z)^2} \bfT(z;\beta,\beta',\alpha;\tau),\\
    \prob\left(\hKPZ(\alpha_1,\tau_1)\le \beta_1, \hKPZ(\alpha_2,\tau_2)\le \beta_2\right) &= \oint_{0} \frac{\rd z}{2\pi\rmi z(1-z)} \bfD (z;\beta_1,\beta_2; (\alpha_1,\tau_1),(\alpha_2,\tau_2)),\quad \tau_2>\tau_1,
\end{split}
\end{equation}
where $\bfT$ and $\bfD$ are explicitly defined by a double sum of contour integrals, see Definition \ref{def:density} below and Theorem 2.19 in \cite{Liu22}. Then the following identity holds, by a simple comparison of their formulas,
\begin{equation}
\label{eq:differential_equation}
\begin{split}
&\bfT(z;\beta,\beta',\alpha;\tau) \\
&= 2\left.\left[\left(\frac{1}{12}\partial_{\beta_1}^4 +\frac14 \partial_{\alpha_1}^2 +\partial_{\tau_1}\partial_{\beta_1}\right)\bfD (\beta_1,\beta_2;z; (\alpha_1,\tau_1),(\alpha_2,\tau_2))\right]\right|_{\substack{\beta_1=\beta',\alpha_1=-\alpha,\tau_1=1-\tau,\\ \beta_2=\beta+\beta',\alpha_2=0, \tau_2=1}}.
\end{split}
\end{equation}
It was known that the function $\bfD$ satisfies some differential equations \cite{Baik-Prokhorov-Silva23}. It might be an interesting question to investigate the quantity in \eqref{eq:differential_equation} from the perspective of differential equations.
\end{rmk}

The identity in Theorem \ref{thm:connection_rp_UTfield} indicates that with the upper tail conditioning, there is a deeper connection between the geodesic and the KPZ fixed point  that we do not fully understand. It would also be an interesting question to give a probabilistic proof of Theorem \ref{thm:connection_rp_UTfield}, but this is beyond the scope of this paper. Another interesting question is whether there exists a relation analogous to \cite[Conjectures 1.10, 1.11]{Liu-Wang24} between the conditional geodesic and the upper tail field, both of which are conditional limits of the directed landscape or KPZ fixed point in the same scaling window. 

\subsection{Main idea of the proof and structure of the paper}

The proofs of the results in this paper involve asymptotic analysis for series expansions of contour integrals, and probabilistic arguments for the KPZ fixed point and directed landscape. Similar asymptotic analysis for this type of expansions has been considered in earlier papers \cite{Liu22c,Liu22b,Liu-Wang24,Liu-Zhang25}, and our analysis in this paper adapts ideas from these papers in our settings. With that being said, our results are novel and some results are surprising, such as Theorem \ref{thm:connection_rp_UTfield}. Moreover, the most challenging part of the proofs in this paper actually comes from that of Proposition \ref{prop:joint_density}, i.e., the claim that $\rp$ and $\hat\rp$ are probability density functions. We introduce some nontrivial probabilistic arguments to bound the geodesic location and geodesic length when the time window is as small as $O(L^{-3/2})$. We leverage many properties of the directed landscape and the KPZ fixed point, while also applying the latest results from the recently defined upper tail field of the KPZ fixed point. We believe that the ideas of these probabilistic arguments could potentially be useful in proving similar properties of the geodesic in the directed landscape.

Below is the structure of the rest of the paper.

Section \ref{sec:proof_theorem} is the proof of Theorem \ref{thm:main}. We conduct a steepest descent analysis of the joint density function, derived in \cite{Liu22c}, for the geodesic location and geodesic lengths, incorporating the scaling considered in this paper.

In Section \ref{sec:proof_density}, we prove Proposition \ref{prop:joint_density} using probabilistic arguments. The main results in this section are Lemmas \ref{lm:bound_geodesic_location} and \ref{lm:bound_geodesic_length}, which we prove in subsections \ref{sec:geodecis_location} and \ref{sec:geodesic_length} respectively.

Finally, in Section \ref{sec:proof_proposition2}, we provide the proof of Proposition \ref{prop:properties_p}. The two more involved properties about the large time limit and the large $x$ or $h$ limit of the joint density functions are proved in two subsections \ref{sec:large_time_limit} and \ref{sec:right_tail} respectively.

\section*{Acknowledgements}

ZL is partly supported by NSF DMS-2246683. We thank the anonymous referee for their valuable suggestions and comments.

\section{Proof of Theorem \ref{thm:main}}
\label{sec:proof_theorem}

Recall that $\pi(\tau)$ denotes the location of the geodesic in the directed landscape $\dL$ from $(0,0)$ to $(0,1)$ at time $0\le\tau\le 1$. Moreover, $\dL(\tau) = \dL(0,0;\pi(\tau),\tau)$ and $\dL(1)-\dL(\tau) =\dL(\pi(\tau),\tau;0,1)$ for $0<\tau< 1$. The proof of Theorem \ref{thm:main} relies on an exact formula of the joint density of  $\dL(\tau)$, $\dL(1)-\dL(\tau)$, and $\pi(\tau)$ obtained in \cite{Liu22c}. Then we apply the asymptotic analysis after appropriately choosing the parameters.

There are four statements \eqref{eq:newmain0}, \eqref{eq:newmain}, \eqref{eq:main0}, and \eqref{eq:main}. All of them can be deduced from the asymptotic analysis (with two different settings of parameters) of some explicit expressions using the joint density of $\dL(\tau)$, $\dL(1)-\dL(\tau)$, and $\pi(\tau)$. Due to the similarity of the analysis, we only show the proof of \eqref{eq:main0} and heuristically derive the other three statements, which could also be proved rigorously in the same way as for \eqref{eq:main0}. 

Now we explain the heuristic argument for \eqref{eq:newmain0}, \eqref{eq:newmain}, and \eqref{eq:main} using \eqref{eq:main0}. The equivalence between \eqref{eq:main} and \eqref{eq:main0} is due to the flip symmetry of the directed landscape (see \cite[Lemma 10.2]{Dauvergne-Ortmann-Virag22}). More explicitly, the joint law of $\pi(t),\dL(t)$ and $\dL(1)$ is the same as that of $\pi(1-t),\dL(1)-\dL(1-t)$ and $\dL(1)$, hence the left-hand sides of \eqref{eq:main} and \eqref{eq:main0} are equal, so are their limits. 

In order to see \eqref{eq:newmain0} and \eqref{eq:newmain}, we need the following asymptotics (see \cite[Equation (25)]{Baik-Buckingham-DiFranco08} and \cite[Equation (3.4)]{Liu22b})
\begin{equation} \label{eq:GUE_asymp}
    \fGUE(L) = \frac{e^{-\frac{4}{3}L^{3/2}}}{8 \pi L}(1 + O(L^{-3/2})),\quad 1-\FGUE(L) = \frac{e^{-\frac{4}{3}L^{3/2}}}{16 \pi L^{3/2}}(1 + O(L^{-3/2})),
\end{equation}
where $\fGUE$ and $\FGUE$ are the density function and the cumulative distribution function of $\dL(1)$.
Therefore, assuming  \eqref{eq:main0}, we heuristically have 
\begin{equation}
    \begin{split}
        &\prob\left( L \pi (tL^{-3/2}) \in (x_1,x_2), L^{1/2}\dL(tL^{-3/2}) \in (h_1,h_2) \ \big| \ \dL(1)\ge L \right)\\
        &=  \int_0^\infty \prob\left( L \pi (tL^{-3/2}) \in (x_1,x_2), L^{1/2}\dL(tL^{-3/2}) \in (h_1,h_2) \ \big| \ \dL(1)=L+yL^{-1/2} \right)\cdot\frac{\fGUE(L + yL^{-1/2})}{L^{1/2}(1-\FGUE(L))}\rd y\\
        &\to \int_0^\infty \int_{h_1}^{h_2} \int_{x_1}^{x_2} \rp(h,x;t) \rd x \rd h \cdot 2e^{-2y} \rd y
    \end{split}
\end{equation}
as $L\to\infty$, where we took the limit inside the integral and used the results \eqref{eq:main0} and \eqref{eq:GUE_asymp}. This gives \eqref{eq:newmain0}. Similarly,
\begin{equation}
    \begin{split}
        &\prob\left( L \pi (1-tL^{-3/2}) \in (x_1,x_2), L^{1/2}(L-\dL(1-tL^{-3/2})) \in (h_1,h_2) \ \big| \ \dL(1)\ge L \right)\\
        &=\int_0^\infty \prob\left( L \pi (1-tL^{-3/2}) \in (x_1,x_2), L^{1/2}(L-\dL(1-tL^{-3/2})) \in (h_1,h_2) \ \big| \ \dL(1)= L+yL^{-1/2} \right)\\
        &\quad \cdot\frac{\fGUE(L + yL^{-1/2})}{L^{1/2}(1-\FGUE(L))}\rd y\\
        &\to \int_0^\infty \int_{h_1+y}^{h_2+y} \int_{x_1}^{x_2} \rp(h,x;t) \rd x \rd h \cdot 2e^{-2y} \rd y= \int_{h_1}^{h_2} \int_{x_1}^{x_2} \int_0^\infty \rp(h+y,x;t)\cdot 2e^{-2y} \rd y \rd x \rd h
    \end{split}
\end{equation}
as $L\to\infty$. Recall the definition of $\hat\rp$ in \eqref{eq:def_hat_rp01}. The statement \eqref{eq:newmain} follows immediately.

\bigskip

In the rest of this section, we focus on the proof of \eqref{eq:main0}. We will first state the joint density function of  $\dL(\tau)$, $\dL(1)-\dL(\tau)$, and $\pi(\tau)$ in the first subsection. Then we apply the asymptotic analysis and obtain \eqref{eq:main0} in the second subsection.

\subsection{The joint density function of the geodesic location and lengths}

Our starting point is the following proposition which gives the exact formula of the joint density function of  $\dL(\tau)$, $\dL(1)-\dL(\tau)$, and $\pi(\tau)$.

\begin{prop}[\cite{Liu22c,Liu22b}]
\label{prop:geodesic_density}
Assume $0<\tau<1$. We have
\begin{equation}
\begin{split}
&\prob\left( \dL(\tau) \in (\betasmin,\betasmax), \dL(1)-\dL(\tau) \in (\betaemin,\betaemax), \pi(\tau) \in (\alphamin,\alphamax) \right) \\
&= \int_{\betasmin}^{\betasmax} \int_{\betaemin}^{\betaemax} \int_{\alphamin}^{\alphamax} \density \left( \beta , \beta'  ,\alpha ;\tau\right) \rd \alpha \rd \beta' \rd \beta
\end{split}
\end{equation}
for any $\betasmin,\betasmax,\betaemin,\betaemax,\alphamin,\alphamax \in \realR$ satisfying $\betasmin<\betasmax,\betaemin<\betaemax$ and $\alphamin<\alphamax$, where the joint density function $\density$ is defined below. See Definition \ref{def:density}.
\end{prop}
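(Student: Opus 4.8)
The plan is to recognize the joint density $\density(\beta,\beta',\alpha;\tau)$ as a marked functional of the two-time, two-location distribution of the narrow-wedge KPZ fixed point; this is the route of \cite{Liu22c} (with \cite{Liu22b} recasting the output), and I sketch it here. First I would decompose the directed landscape at the intermediate time $\tau$: by the reverse triangle inequality together with the defining property \eqref{eq:def_geodesic} of the geodesic, $\dL(0,0;0,1)=\max_{x}\bigl(\dL(0,0;x,\tau)+\dL(x,\tau;0,1)\bigr)$, the maximizer is a.s.\ unique and equals $\pi(\tau)$, and $\dL(\tau)$ and $\dL(1)-\dL(\tau)$ are precisely the two summands evaluated at that maximizer. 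The two random functions $x\mapsto\dL(0,0;x,\tau)$ and $x\mapsto\dL(x,\tau;0,1)$ are independent, as they are built from the disjoint time slabs $[0,\tau]$ and $[\tau,1]$ in the construction of \cite{Dauvergne-Ortmann-Virag22}; the first is a narrow-wedge KPZ height function at time $\tau$ by \eqref{eq:relation_KPZ_DL2}, and the second is, by the flip symmetry of $\dL$ (\cite[Lemma 10.2]{Dauvergne-Ortmann-Virag22}) and up to a reflection in space, an independent narrow-wedge KPZ height function at time $1-\tau$. Hence $\density(\beta,\beta',\alpha;\tau)$ is the joint density of the location of, and of the two component values at, the maximizer in the composition $\hKPZ(0,1)=\max_x\bigl(\hKPZ(x,\tau)+\dL(x,\tau;0,1)\bigr)$ — a quantity contained in the joint law of $\hKPZ(\alpha,\tau)$ and $\hKPZ(0,1)$.

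Second I would extract this density from the two-time, two-location distribution $\prob\bigl(\hKPZ(\alpha_1,\tau_1)\le\beta_1,\hKPZ(\alpha_2,\tau_2)\le\beta_2\bigr)$, whose exact series-of-contour-integrals formula — the generating function $\bfD$ of Remark~\ref{rmk:rel2ptKPz} — comes from the multipoint distribution of TASEP in \cite[Theorem 2.19]{Liu22}. The mechanism is the one displayed in Remark~\ref{rmk:rel2ptKPz}: after the substitution $\beta_1=\beta'$, $\alpha_1=-\alpha$, $\tau_1=1-\tau$, $\beta_2=\beta+\beta'$, $\alpha_2=0$, $\tau_2=1$, differentiating $\bfD$ in the earlier height/space/time arguments through the Airy backward evolution — whose generator on these formulas is precisely the operator $\rD$ of \eqref{eq:def_DiffOperator} — converts the statement ``$\hKPZ$ stays below the cutoffs'' into a statement about the location and value of the maximizer, producing $\bfT$; the residue $\oint_{0}\frac{\rd z}{2\pi\rmi(1-z)^2}$ then depoissonizes the generating variable $z$ and turns $\bfT$ into the genuine probability density $\density$. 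Substituting the explicit formula for $\bfD$ and carrying out the finitely many differentiations term by term, one reorganizes the result into the series of Definition~\ref{def:density}; commuting differentiation (and the $z$-integral) with the Fredholm/series expression is justified by the super-exponential decay of the integrands recorded after Definition~\ref{def:rp}.

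The step I expect to be the main obstacle is making this density-extraction rigorous. One must establish a.s.\ uniqueness of the maximizer together with enough local regularity of the two sheets for the conditional law defining $\density$ to be non-degenerate and absolutely continuous; justify commuting the differentiations and the $z$-contour integral with the Fredholm-determinant/series expressions; and verify that the Airy backward evolution acting on the two-time formula coincides exactly with $\rD$ rather than only formally. Since $0<\tau<1$ is fixed and bounded away from $0$ and $1$, no uniformity near the endpoints is needed for this proposition; that delicacy is instead exactly what the asymptotic analysis of Section~\ref{sec:proof_theorem} and the probabilistic estimates of Section~\ref{sec:proof_density} are designed to handle.
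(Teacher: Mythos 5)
This proposition is imported from \cite{Liu22c,Liu22b}; the paper gives no proof of it beyond the citation and a remark on notation, so the relevant comparison is with the method of the cited works. Your first step --- decomposing $\dL(0,0;0,1)=\max_x\bigl(\dL(0,0;x,\tau)+\dL(x,\tau;0,1)\bigr)$, using independence of the two time slabs and a.s.\ uniqueness of the maximizer $\pi(\tau)$ --- is correct and is indeed the starting point. The genuine gap is in your second step. You assert that the density is obtained by applying the operator $\rD=\frac{1}{12}\partial_h^4+\frac14\partial_x^2+\partial_h\partial_t$ to the two-time distribution $\bfD$ of the KPZ fixed point, on the grounds that $\rD$ is ``the generator of the Airy backward evolution'' and that this differentiation ``converts the statement that $\hKPZ$ stays below the cutoffs into a statement about the location and value of the maximizer.'' No such mechanism is established anywhere: the identity $\bfT=2\,\rD\,\bfD$ of Remark \ref{rmk:rel2ptKPz} is presented in this paper as an a posteriori coincidence observed ``by a simple comparison of their formulas,'' and the authors explicitly state that the analogous identity in Theorem \ref{thm:connection_rp_UTfield} reflects ``a deeper connection \dots\ that we do not fully understand'' and that giving it a probabilistic proof is an open question. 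You are therefore using as your main engine precisely the statement whose derivation is missing. Note also that the $z$-kernels differ ($\frac{1}{(1-z)^2}$ for $\density$ versus $\frac{1}{z(1-z)}$ on a different contour for the probability), so the density is not even formally $\rD$ applied to the two-time distribution function; the relation only holds at the level of the integrands $\bfT$ and $\bfD$, and your ``depoissonization'' sentence does not account for this discrepancy.

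The actual proof in \cite{Liu22c} does not pass through the KPZ fixed point formula at all. It works in the prelimit model (exponential directed last passage percolation / TASEP): one writes the event that the geodesic passes through a prescribed lattice point with prescribed passage times on either side as an explicit expression in the finite model, using the maximizer decomposition together with exact transition-probability (multipoint TASEP) formulas from \cite{Liu22}, and then performs the KPZ scaling limit to obtain $\density$ in the form of Definition \ref{def:density}; \cite{Liu22b} translates this to the directed landscape. If you want to salvage your route, you would have to independently prove the identity of Remark \ref{rmk:rel2ptKPz}, which is at least as hard as the proposition itself.
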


We remark that we slightly modified the notations in \cite[Proposition 2.1]{Liu22b}. The density function (2.1) in \cite[Proposition 2.1]{Liu22b} corresponds to $\density(\ell_1,\ell_2,-x;s)= \density(\ell_1,\ell_2,x;s)$ in our notations since $\density \left( \beta , \beta'  ,\alpha ;\tau\right) $ is symmetric in $\alpha$ (from the definition of the geodesic, or from the formula directly).

In order to write down the definition of the joint density function $\density$, we need some notations. We first introduce six contours $C_{\rL}^\inn, C_{\rL},C_{\rL}^\out,C_{\rR}^\out, C_{\rR}$, and $C_{\rR}^\inn$. The contours $C_{\rL}^\inn, C_{\rL},C_{\rL}^\out$, from left to right, are three parallel contours that go from $\infty e^{-2\pi\rmi/3}$ to $e^{2\pi\rmi/3}$. The other three contours are their symmetrization about the imaginary axis, i.e., they all go from  $\infty e^{-\pi\rmi/3}$ to $e^{\pi\rmi/3}$ and $C_{\rR}^\out=-C_{\rL}^\out, C_\rR=-C_\rL, C_{\rR}^\inn=-C_{rL}^\inn$. See Figure \ref{fig:C_Contours} for an illustration.

\begin{figure}
\centering
\begin{tikzpicture}[
    x=1cm, y=0.6cm 
    ]
    \def\R{4} 
    \def\angle{120} 
    
    \path 
        ({\R * cos(\angle)}, {-\R * sin(\angle)}) coordinate (A) 
        ({\R * cos(\angle)}, {\R * sin(\angle)}) coordinate (C); 
    \path 
        ({\R * cos(60)}, {-\R * sin(60)}) coordinate (D) 
        ({\R * cos(60)}, {\R * sin(60)}) coordinate (F); 
        
    \draw[->] ({\R * cos(\angle) - 1}, 0) -- ({-\R * cos(\angle) + 1}, 0) node[below] {Re};
    \draw[->] (0, {-\R * sin(\angle) - 1}) -- (0, {\R * sin(\angle) + 1}) node[left] {Im};
    
    \draw[blue, thick, ->] 
        (A) 
        .. controls (-0.8, 0) .. (C) 
        node[midway, right, black] {};
    \draw[blue, thick, ->] 
        ({\R * cos(\angle)-0.8}, {-\R * sin(\angle)}) 
        .. controls (-0.8-0.8, 0) .. ({\R * cos(\angle)-0.8}, {\R * sin(\angle)}) 
        node[midway, right, black] {};
    \draw[blue, thick, ->] 
        ({\R * cos(\angle)+ 0.8}, {-\R * sin(\angle) }) 
        .. controls (-0.8+ 0.8, 0) .. ({\R * cos(\angle)+ 0.8}, {\R * sin(\angle) }) 
        node[midway, right, black] {};
    
    \draw[red, thick, ->] 
        (D) 
        .. controls (0.8, 0) .. (F) 
        node[midway, right, black] {};
    \draw[red, thick, ->] 
        ({\R * cos(60)- 0.8}, {-\R * sin(60) }) 
        .. controls (0.8-0.8, 0) .. ({\R * cos(60)- 0.8}, {\R * sin(60) }) 
        node[midway, right, black] {};
    \draw[red, thick, ->] 
        ({\R * cos(60)+ 0.8}, {-\R * sin(60) }) 
        .. controls (0.8+ 0.8, 0) .. ({\R * cos(60)+ 0.8}, {\R * sin(60)}) 
        node[midway, right, black] {};
    
    \node[below left] at (A) {$\infty e^{-2\pi \mathrm{i}/3}$};
    \node[above left] at (C) {$\infty e^{2\pi \mathrm{i}/3}$};
    \node[above left] at (-2.5,1.9) {$C_\rL^\inn$}; 
    \node[above left] at (-1.5,1.9) {$C_\rL$};
    \node[above left] at (0.1,1.9) {$C_\rL^\out$};
    \node[below right] at (D) {$\infty e^{-\pi \mathrm{i}/3}$};
    \node[above left] at (2.4,1.9) {$C_\rR^\inn$}; 
    \node[above left] at (1.6,1.9) {$C_\rR$};
    \node[above left] at (0.95,1.9) {$C_\rR^\out$};
    \node[above right] at (F) {$\infty e^{\pi \mathrm{i}/3}$};
\end{tikzpicture}
\caption{Illustration of the contours $C_{\rL}^\inn, C_{\rL},C_{\rL}^\out,C_{\rR}^\out, C_{\rR},$, and $C_{\rR}^\inn$. Each contour on the left half plane goes from $\infty e^{-2\pi/3}$ to $\infty e^{2\pi/3}$, and each contour on the right half plane goes from $\infty e^{-\pi/3}$ to $\infty e^{\pi/3}$. Following the convention in \cite{Liu22c}, we use the superscripts ``in'' and ``out'' to denote the contour positions relative to the reference point $-\infty$ or $\infty$.}
\label{fig:C_Contours}
\end{figure}

Now we are ready to define the following density function $\density$.

\begin{defn}
\label{def:density}
The density function $\density$ is given by the following formula
\begin{equation}
    \label{eq:def_density_prelimit}
	\begin{split}
	\density (\beta ,\beta',\alpha;\tau) = \oint_0 \frac{\rd z}{2\pi \rmi (1-z)^2} \sum_{n, n'\ge 1} \frac{1}{(n!)^2 (n'!)^2} T_{n,n'} (z;\beta,\beta',\alpha;\tau),
\end{split}
\end{equation}
where 
\begin{equation}
\label{eq:def_T}
	\begin{split}
		& T_{n,n'} (z;\beta,\beta',\alpha;\tau)\\
		& = 2\prod_{i=1}^{n} \left( \frac{1}{1-z} \int_{C_{\rL,\inn}} \frac{\rd \xi_{i}}{2\pi\rmi} -\frac{z}{1-z} \int_{C_{\rL,\out}} \frac{\rd \xi_{i}}{2\pi\rmi}\right)\left( \frac{1}{1-z} \int_{C_{\rR,\inn}} \frac{\rd \eta_{i}}{2\pi\rmi} -\frac{z}{1-z} \int_{C_{\rR,\out}} \frac{\rd \eta_{i}}{2\pi\rmi}\right)\\
		& \quad \prod_{i'=1}^{n'} \int_{C_\rL} \frac{\rd \xi'_{i'}}{2\pi\rmi}\int_{C_\rR} \frac{\rd \eta'_{i'}}{2\pi\rmi}
		\left(1-z\right)^{n'} \left(1- \frac{1}{z}\right)^{n} \cdot 
		\rH\left(\vec{\xi}\sqcup\vec{\eta'};\vec{\eta}\sqcup \vec{\xi'}\right) \\
		&\quad \cdot \prod_{i=1}^{n}\frac{f_{\beta,\alpha;\tau}(\xi_i)}{f_{\beta,\alpha;\tau}(\eta_{i})} \cdot \prod_{i'=1}^{n'}\frac{f_{\beta',-\alpha;1-\tau}(\xi'_{i'})}{f_{\beta',-\alpha;1-\tau}(\eta'_{i'})} \cdot 
        \rC(\Vec{\eta'};\Vec{\xi'})\cdot \rC(\Vec{\xi'}\sqcup\Vec{\eta}; \Vec{\eta'}\sqcup \Vec{\xi})\cdot \rC(\Vec{\xi};\Vec{\eta}).
	\end{split}
\end{equation}
The vectors $\Vec{\xi}=(\xi_1,\ldots,\xi_{n}), \Vec{\eta}=(\eta_1,\ldots,\eta_n)$, and $\Vec{\xi'}=(\xi'_1,\ldots,\xi'_{n'}), \Vec{\eta'} =(\eta'_1,\ldots,\eta'_{n'})$. $\rC$ is the Cauchy determinant defined by \eqref{eq:def_CauchyDet}, and $\rH$ is the function defined in \eqref{eq:def_rH}. Finally, the functions $f_{a,b;c}$ are defined by \eqref{eq:def_f}; More explicitly,
\begin{equation}
f_{\beta,\alpha;\tau}(\zeta) = e^{-\frac{\tau}{3}\zeta^3 +\alpha \zeta^2 +\beta\zeta},\quad f_{\beta',-\alpha;1-\tau}(\zeta) = e^{-\frac{1-\tau}{3}\zeta^3 -\alpha\zeta^2 +\beta'\zeta}
\end{equation}
for all $\zeta\in\complexC$.
\end{defn}

\medskip

As mentioned in Remark \ref{rmk:rel2ptKPz}, the density function $p$ has a very similar structure with the two-point probability distribution function of the KPZ fixed point. Below we present a version of the two-point tail probability function of the KPZ fixed point that could be found in \cite[Proposition 3.1]{Liu-Zhang25} with a special choice of parameters. We use the tail probability function instead of the probability distribution function here, because the asymptotics of the tail probability function with the same scalings has been considered in \cite{Liu-Zhang25} and we will be able to modify their results directly instead of redoing all the asymptotics. We also  modify the notations of the parameters to match that in the function $p$.

\begin{lm}[\cite{Liu22},\cite{Liu-Zhang25}]
    We have the following two-point tail probability function of the KPZ fixed point with the narrow-wedge initial condition 
    \begin{equation}
        \label{eq:tail_KPZ_2point}
        \prob\left(\hKPZ(-\alpha,1-\tau)\ge \beta', \hKPZ(0,1)\ge \beta+\beta'\right) =\oint_{>1} \frac{\rd z}{2\pi\rmi z(1-z)} \sum_{n,n'\ge 1} \frac{1}{(n!)^2(n'!)^2} D_{n,n'}(z;\beta,\beta',\alpha;\tau),
    \end{equation}
    where 
    \begin{equation}
    \label{eq:def_D}
        \begin{split}
		& D_{n,n'} (z;\beta,\beta',\alpha;\tau)\\
		& = \prod_{i=1}^{n} \left( \frac{1}{1-z} \int_{C_{\rL,\inn}} \frac{\rd \xi_{i}}{2\pi\rmi} -\frac{z}{1-z} \int_{C_{\rL,\out}} \frac{\rd \xi_{i}}{2\pi\rmi}\right)\left( \frac{1}{1-z} \int_{C_{\rR,\inn}} \frac{\rd \eta_{i}}{2\pi\rmi} -\frac{z}{1-z} \int_{C_{\rR,\out}} \frac{\rd \eta_{i}}{2\pi\rmi}\right)\\
		& \quad \prod_{i'=1}^{n'} \int_{C_\rL} \frac{\rd \xi'_{i'}}{2\pi\rmi}\int_{C_\rR} \frac{\rd \eta'_{i'}}{2\pi\rmi}
		\left(1-z\right)^{n'} \left(1- \frac{1}{z}\right)^{n} \cdot  \prod_{i=1}^{n}\frac{f_{\beta,\alpha;\tau}(\xi_i)}{f_{\beta,\alpha;\tau}(\eta_{i})} \cdot \prod_{i'=1}^{n'}\frac{f_{\beta',-\alpha;1-\tau}(\xi'_{i'})}{f_{\beta',-\alpha;1-\tau}(\eta'_{i'})} \\
        &\quad \cdot 
        \rC(\Vec{\eta'};\Vec{\xi'})\cdot \rC(\Vec{\xi'}\sqcup\Vec{\eta}; \Vec{\eta'}\sqcup \Vec{\xi})\cdot \rC(\Vec{\xi};\Vec{\eta}).
	\end{split}
    \end{equation}
    Here the integration contours and the functions $\rC$ and $f$ are the same as in Definition \ref{def:density}. 
\end{lm}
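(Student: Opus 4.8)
The plan is to obtain \eqref{eq:tail_KPZ_2point} as a specialization of the known exact two-point distribution formula for the narrow-wedge KPZ fixed point, recast in tail-probability form in \cite[Proposition 3.1]{Liu-Zhang25} and ultimately resting on the multipoint distribution formula of \cite{Liu22}. That result expresses, for generic space-time points $(\alpha_1,\tau_1)$ and $(\alpha_2,\tau_2)$ with $\tau_1<\tau_2$ and generic levels $\beta_1,\beta_2$, the probability $\prob\bigl(\hKPZ(\alpha_1,\tau_1)\ge\beta_1,\ \hKPZ(\alpha_2,\tau_2)\ge\beta_2\bigr)$ as a contour integral $\oint\frac{\rd z}{2\pi\rmi\, z(1-z)}$ of a double series over $n,n'\ge1$ whose summand has precisely the structure of $D_{n,n'}$ in \eqref{eq:def_D}: one family of variables $\vec{\xi},\vec{\eta}$ weighted by $f$ with the time, space and height \emph{increments} of the two points; a second family $\vec{\xi'},\vec{\eta'}$ weighted by $f$ with the data of the first point; the factors $(1-z)^{n'}\bigl(1-\tfrac{1}{z}\bigr)^{n}$; the in/out splitting of the $\vec{\xi},\vec{\eta}$ contours; and the three Cauchy determinants $\rC(\vec{\eta'};\vec{\xi'})$, $\rC(\vec{\xi'}\sqcup\vec{\eta};\vec{\eta'}\sqcup\vec{\xi})$ and $\rC(\vec{\xi};\vec{\eta})$.

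First I would fix the parameters: take $(\alpha_1,\tau_1)=(-\alpha,1-\tau)$, $(\alpha_2,\tau_2)=(0,1)$, $\beta_1=\beta'$ and $\beta_2=\beta+\beta'$. Then the increments are time $\tau_2-\tau_1=\tau$, space $\alpha_2-\alpha_1=\alpha$ and height $\beta_2-\beta_1=\beta$, so the $\vec{\xi},\vec{\eta}$ weight becomes exactly $\prod_{i=1}^{n}f_{\beta,\alpha;\tau}(\xi_i)/f_{\beta,\alpha;\tau}(\eta_i)$ and the $\vec{\xi'},\vec{\eta'}$ weight exactly $\prod_{i'=1}^{n'}f_{\beta',-\alpha;1-\tau}(\xi'_{i'})/f_{\beta',-\alpha;1-\tau}(\eta'_{i'})$, matching \eqref{eq:def_D}. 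It then remains only to reconcile conventions between \cite{Liu22,Liu-Zhang25} and the present paper: that the $\vec{\xi},\vec{\eta}$ contours may be taken with the prescribed angles and split into the parallel in/out pairs $C_{\rL}^{\inn},C_{\rL}^{\out}$ and $C_{\rR}^{\inn},C_{\rR}^{\out}$; that the $\vec{\xi'},\vec{\eta'}$ contours $C_{\rL},C_{\rR}$ are sandwiched between the corresponding in/out pairs so that every Cauchy kernel $1/(w-w')$ occurring is nonsingular on the contours; and that the overall constant and the orientation of the $z$-contour match $\oint_{>1}\frac{\rd z}{2\pi\rmi\, z(1-z)}$. If \cite{Liu-Zhang25} parametrizes its two-point formula with the time arguments in reversed order, one first applies the flip symmetry of the directed landscape, \cite[Lemma 10.2]{Dauvergne-Ortmann-Virag22}, together with \eqref{eq:relation_KPZ_DL2}; this is routine.

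I do not expect a substantial obstacle. All of the analytic content --- the derivation of the (multipoint, hence two-point) distribution of the narrow-wedge KPZ fixed point, its tail-probability form, and the absolute convergence of the series and contour integrals (which, as recalled after Definition \ref{def:rp}, follows from the super-exponential decay of $f_{\beta,\alpha;\tau}$ along the $C_{\rL}$-type contours and its growth along the $C_{\rR}$-type contours) --- is already established in \cite{Liu22,Liu-Zhang25}. The residual work is bookkeeping: matching the contours, the prefactor, the $z$-contour and the argument conventions of \cite[Proposition 3.1]{Liu-Zhang25} with those fixed in Definition \ref{def:density}. If that reference is phrased as a distribution-function identity of the kind quoted in Remark \ref{rmk:rel2ptKPz} rather than directly as a tail identity, one additionally passes from $\prob\bigl(\hKPZ(\alpha_1,\tau_1)\le\beta_1,\ \hKPZ(\alpha_2,\tau_2)\le\beta_2\bigr)$ to the tail probability \eqref{eq:tail_KPZ_2point} by the standard combination of inclusion--exclusion with the one-point tail formulas and a deformation of the $z$-contour, as carried out in \cite{Liu22,Liu-Zhang25}. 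In either case no new estimate or probabilistic input is required, so the proof reduces to invoking \cite[Proposition 3.1]{Liu-Zhang25} at the above parameter values and verifying the notational correspondence term by term. The step I expect to require the most care is precisely this reconciliation of conventions, together with the distribution-to-tail conversion if the cited statement is not already in tail form; neither is deep.
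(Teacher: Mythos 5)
Your proposal matches the paper exactly: the lemma is not proved in the paper but is quoted directly as \cite[Proposition 3.1]{Liu-Zhang25} (itself resting on the multipoint formula of \cite{Liu22}) with the parameter specialization $(\alpha_1,\tau_1)=(-\alpha,1-\tau)$, $(\alpha_2,\tau_2)=(0,1)$, $\beta_1=\beta'$, $\beta_2=\beta+\beta'$ and a relabeling of notation, which is precisely the reduction you describe. The only content is the bookkeeping of conventions, and you have identified that correctly.
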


Note that the formula of $D_{n,n'}$ is almost identical to that of $T_{n,n'}$ in \eqref{eq:def_T}, except for the factors $2$ and $\rH(\vec{\xi}\sqcup\vec{\eta'};\vec{\eta}\sqcup\vec{\xi'})$ in \eqref{eq:def_T}. Moreover, the outside $z$ integral  in \eqref{eq:tail_KPZ_2point} is different from that in \eqref{eq:def_density_prelimit}.

\subsection{Asymptotic analysis and the proof of \eqref{eq:main0}}

We will consider the asymptotics of $\density(\beta,\beta',\alpha;\tau)$ when the parameters satisfy 
\begin{equation} 
    \label{eq:scale_beta_alpha_tau}
    \beta =   h L^{-1/2}, \quad \beta' = L -h L^{-1/2}, \quad \alpha =  x L^{-1}, \quad \tau =  t L^{-3/2},
\end{equation}
where $(h,x)$ stays in a compact subset of $\realR^2$ and $t>0$ is fixed.

The asymptotic analysis of the multipoint (including the two-point) tail probability function of the KPZ fixed point in this scaling window was considered in \cite{Liu-Zhang25}. Due to the similarity of the structure of $\density(\beta,\beta',\alpha;\tau)$ and the function $\prob\left(\hKPZ(-\alpha,1-\tau)\ge \beta', \hKPZ(0,1)\ge \beta+\beta'\right)$, it is quite straightforward to modify the results in \cite{Liu-Zhang25} to obtain the asymptotics of $\density(\beta,\beta',\alpha;\tau)$. Below we explain how the asymptotic analysis works and show how to derive the asymptotics of $\density(\beta,\beta',\alpha;\tau)$. We will not present all the details since they were already written in \cite{Liu-Zhang25}. Instead, we focus on the idea of the analysis and how to obtain our results.

It turns out that with the setting of the parameters in \eqref{eq:scale_beta_alpha_tau}, the main contribution of the summation of terms $D_{n,n'}$ comes from the case $n'=1$, and the variables need to be rescaled as 
\begin{equation}
\label{eq:xi_eta_rescale}
    \xi_i = \sqrt{L} u_i,\quad \eta_i =\sqrt{L} v_i, \quad \xi'_1\approx -\sqrt{L},\quad \eta'_1\approx \sqrt{L},
\end{equation} 
where $u_i\in\Gamma_\rL$ if $\xi_i\in  C_{\rL,\out}$, or $u_i\in -1+\Gamma_\rL$ if $\xi_i\in  C_{\rL,\inn}$. Similarly, $v_i\in\Gamma_{\rR}$ if $\eta_i\in C_{\rR,\out}$, or $v_i\in 1+\Gamma_{\rR}$ if $\eta_i\in C_{\rR,\inn}$. Here $\Gamma_\rL$ and $\Gamma_\rR$ are contours defined in Section \ref{sec:Def_rp}, see Figure \ref{fig:Gamma_Contours} for an illustration. With this rescaling, the integrand in $D_{n,n'}$ decays super-exponentially fast as the variables go to infinity along the new integration contours, and the dominated convergence theorem applies for \eqref{eq:def_D}. The following results were proved in \cite[Lemma 3.3, Lemma 3.4]{Liu-Zhang25}.

\begin{lm}[\cite{Liu-Zhang25}]
    \label{lm:asymptotics_D}
    Assume \eqref{eq:scale_beta_alpha_tau} where $(h,x)$ stays in a compact subset of $\realR^2$ and $t>0$ is fixed. Then 
    \begin{equation}
        \begin{split}
            &\lim_{L\to\infty}16\pi L^{3/2}e^{\frac43L^{3/2}}D_{n,n'} (z;\beta,\beta',\alpha;\tau)\\
            &= 2(1-z)\left(1-\frac1z\right)^n \prod_{i=1}^{n} \left( \frac{1}{1-z} \int_{-1+\Gamma_\rL} \frac{\rd u_{i}}{2\pi\rmi} -\frac{z}{1-z} \int_{\Gamma_\rL} \frac{\rd u_{i}}{2\pi\rmi}\right)\left( \frac{1}{1-z} \int_{1+\Gamma_\rR} \frac{\rd v_{i}}{2\pi\rmi} -\frac{z}{1-z} \int_{\Gamma_\rR} \frac{\rd v_{i}}{2\pi\rmi}\right)\\
            &\quad \cdot \rC((-1)\sqcup \vec{V}; (1)\sqcup \vec{U})\cdot \rC(\vec{U};\vec{V}) \cdot \prod_{i=1}^n \frac{f_{h,x;t}(u_i)}{f_{h,x;t}(v_i)} \cdot \frac{f_{-h,-x;-t}(-1)}{f_{-h,-x;-t}(1)}
        \end{split}
    \end{equation}
    when $n'=1$, and 
    \begin{equation}
        \lim_{L\to\infty}16\pi L^{3/2}e^{\frac43L^{3/2}}D_{n,n'} (z;\beta,\beta',\alpha;\tau) =0
    \end{equation}
    when $n'\ge 2$. Here $\vec{U}=(u_1,\ldots,u_n)$ and $\vec{V}=(v_1,\ldots,v_n)$, and the function $f$ is defined in \eqref{eq:def_f}. Moreover,
    the following uniform bound holds
    \begin{equation}
        \left|16\pi L^{3/2}e^{\frac43L^{3/2}}D_{n,n'} (z;\beta,\beta',\alpha;\tau) \right| \le C^{n+n'}n^n(n')^{n'}\cdot \frac{(1+|z|)^n}{|1-z|^{n-n'}|z|^n},
    \end{equation}
    where $C$ does not depend on $n,n'$ or $z$.
\end{lm}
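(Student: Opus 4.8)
The plan is to run the same steepest-descent analysis that yields \cite[Lemmas 3.3, 3.4]{Liu-Zhang25}, now applied to the two-point function $D_{n,n'}$ of \eqref{eq:def_D} under the scaling \eqref{eq:scale_beta_alpha_tau}. The key structural fact is that the variables $\xi'_{i'},\eta'_{i'}$ and the variables $\xi_i,\eta_i$ behave completely differently. Since $\beta'=L-hL^{-1/2}$ is large while $1-\tau\to1$ and $\alpha\to0$, the exponent $-\tfrac{1-\tau}{3}\zeta^3-\alpha\zeta^2+\beta'\zeta$ of $f_{\beta',-\alpha;1-\tau}$ has saddle points near $\pm\sqrt{\beta'}=\pm\sqrt{L}\,(1+o(1))$, so one pushes $C_{\rL}$ through the saddle near $-\sqrt{L}$ and $C_{\rR}$ through the saddle near $+\sqrt{L}$; writing $\xi'_1=\sqrt{L}\,w$ with $w$ near $-1$, this exponent becomes $L^{3/2}\bigl(w-\tfrac{w^3}{3}\bigr)+\bigl(\tfrac t3w^3-xw^2-hw\bigr)+o(1)$, whose leading part has critical value $-\tfrac23L^{3/2}$ at $w=-1$ and whose $O(1)$ part equals $\log f_{-h,-x;-t}(-1)$ there, and symmetrically the $\eta'_1$-integral localizes near $+\sqrt{L}$ with $\log(1/f_{\beta',-\alpha;1-\tau})$ having leading critical value $-\tfrac23L^{3/2}$ and $O(1)$ part $\log(1/f_{-h,-x;-t}(1))$. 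For the remaining variables one rescales $\xi_i=\sqrt{L}\,u_i$, $\eta_i=\sqrt{L}\,v_i$, deforms the rescaled left inner and outer contours onto $-1+\Gamma_{\rL}$ and $\Gamma_{\rL}$ (and their mirror images on the right), and uses the exact identity $f_{\beta,\alpha;\tau}(\sqrt{L}\,u)=f_{h,x;t}(u)$ under \eqref{eq:scale_beta_alpha_tau}, so those integrals are already in limiting form.

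Next I would Laplace-expand the $\xi'_1$ and $\eta'_1$ integrals along the vertical steepest-descent rays through $-1$ and $+1$. Each of these two integrals, Jacobian included, produces the factor $\tfrac1{2\sqrt\pi}L^{-1/4}e^{-\frac23L^{3/2}}$ times an $O(1)$ term — namely $f_{-h,-x;-t}(-1)$ for the $\xi'_1$-integral and $f_{-h,-x;-t}(1)^{-1}$ for the $\eta'_1$-integral — so their product contributes $\tfrac1{4\pi}L^{-1/2}e^{-\frac43L^{3/2}}\,\tfrac{f_{-h,-x;-t}(-1)}{f_{-h,-x;-t}(1)}$. Then I would collect the remaining powers of $L$: the Jacobians $\rd\xi_i=\sqrt{L}\,\rd u_i$, $\rd\eta_i=\sqrt{L}\,\rd v_i$ give $L^{n}$, and, using $\rC(\sqrt{L}\,\vec{W};\sqrt{L}\,\vec{W'})=L^{-m/2}\rC(\vec{W};\vec{W'})$ for an $m\times m$ Cauchy determinant, evaluation at $\xi'_1=-\sqrt{L}$, $\eta'_1=\sqrt{L}$, $\xi_i=\sqrt{L}\,u_i$, $\eta_i=\sqrt{L}\,v_i$ gives $\rC(\vec{\eta'};\vec{\xi'})\to\tfrac12L^{-1/2}$, $\rC(\vec{\xi'}\sqcup\vec{\eta};\vec{\eta'}\sqcup\vec{\xi})\to L^{-(n+1)/2}\rC((-1)\sqcup\vec{V};(1)\sqcup\vec{U})$, and $\rC(\vec{\xi};\vec{\eta})\to L^{-n/2}\rC(\vec{U};\vec{V})$, i.e.\ an overall $\tfrac12L^{-(n+1)}$. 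Multiplying, and noting that the $z$-dependent factor $(1-z)^{n'}(1-\tfrac1z)^n=(1-z)(1-\tfrac1z)^n$ and the rescaled $\xi,\eta$ integrals survive unchanged, $D_{n,1}$ is asymptotic to $\tfrac1{8\pi}L^{-3/2}e^{-\frac43L^{3/2}}$ times the contour integral in the statement, so $16\pi L^{3/2}e^{\frac43L^{3/2}}D_{n,1}$ converges to it with overall constant $16\pi\cdot\tfrac1{8\pi}=2$. For $n'\ge2$ the only change is that each additional pair $(\xi'_{i'},\eta'_{i'})$ contributes one more $e^{-\frac43L^{3/2}}$, up to polynomial-in-$L$ corrections, so after multiplying by $16\pi L^{3/2}e^{\frac43L^{3/2}}$ the term tends to $0$.

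For the uniform bound, and to justify interchanging the limit with the sums over $n,n'$ and the $z$-contour integral, I would argue as in \cite{Liu22c,Liu-Zhang25}: bound each contour integral by the supremum of the integrand's absolute value times the finite arc length, using the super-exponential decay at infinity of $f_{\beta,\alpha;\tau}(\xi_i)$, $1/f_{\beta,\alpha;\tau}(\eta_i)$, $f_{\beta',-\alpha;1-\tau}(\xi'_{i'})$ and $1/f_{\beta',-\alpha;1-\tau}(\eta'_{i'})$ along the chosen contours; bound the three Cauchy determinants by a Hadamard-type estimate (or their explicit product formula); and estimate the $z$-dependent prefactors elementarily. This yields the claimed majorant $C^{n+n'}n^n(n')^{n'}(1+|z|)^n/(|1-z|^{n-n'}|z|^n)$, which against the weight $1/((n!)^2(n'!)^2)$ is summable and integrable over the fixed contour $\{|z|>1\}$, so dominated convergence applies. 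The main obstacle is this bookkeeping: one must check that the contour deformations never cross a pole of the mixed Cauchy kernel $\rC(\vec{\xi'}\sqcup\vec{\eta};\vec{\eta'}\sqcup\vec{\xi})$ (which holds because the left- and right-hand families of contours stay strictly separated across the imaginary axis, as in \cite{Liu-Zhang25}), track every power of $L$ and every numerical constant through the rescaling and the saddle-point expansion so that the normalization comes out to be $16\pi L^{3/2}e^{\frac43L^{3/2}}$, and make the saddle-point error terms uniform in $n,n'$ and in $z$ on $\{|z|>1\}$; with these in hand the statement follows exactly as in \cite{Liu-Zhang25}.
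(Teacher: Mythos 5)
Your proposal is correct and follows essentially the same route as the source: the paper itself only cites this lemma from \cite{Liu-Zhang25} and sketches the method (the rescaling \eqref{eq:xi_eta_rescale}, localization of $\xi_1'\approx-\sqrt{L}$, $\eta_1'\approx\sqrt{L}$, dominance of $n'=1$, and dominated convergence), all of which your saddle-point computation and power counting reproduce, with the constants ($\tfrac{1}{8\pi}L^{-3/2}e^{-\frac43L^{3/2}}$, hence the factor $2$ after normalizing by $16\pi L^{3/2}e^{\frac43 L^{3/2}}$) and the contour placements ($C_{\rL}^{\inn}\to-1+\Gamma_\rL$, $C_{\rL}^{\out}\to\Gamma_\rL$, and mirror images) coming out exactly as in the statement. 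The only minor imprecision is attributing pole-avoidance solely to the left/right separation across the imaginary axis, whereas the relevant poles of $\rC(\vec{\xi'}\sqcup\vec{\eta};\vec{\eta'}\sqcup\vec{\xi})$ at $\xi_j=\xi'_{1}$ (i.e.\ $u_j=-1$) within the left half-plane are what force the inner/outer contours to land on opposite sides of $-1$; your final contour choices already respect this.
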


Now we consider the asymptotics of $T_{n,n'}(z;\beta,\beta',\alpha;\tau)$. Note that the only difference is the factors $2$ and  $ \rH\left(\vec{\xi}\sqcup\vec{\eta'};\vec{\eta}\sqcup \vec{\xi'}\right)$. The factor $\rH$, however, will not affect the argument in the asymptotic analysis since this factor only grows as a polynomial and the dominating factors, the factors involving the function $f$, in the integrand decay super-exponentially fast along the contour for $u_i\in \Gamma_{\rL} \cup (-1+\Gamma_{\rL})$ and $v_i\in \Gamma_{\rR} \cup (-1+\Gamma_{\rR})$. Therefore, the analysis  in \cite{Liu-Zhang25} to prove Lemma \ref{lm:asymptotics_D} goes through for the  function $T_{n,n'}(z;\beta,\beta',\alpha;\tau)$, and the contribution from the $\rH$ function gives 
\begin{equation}    \rH\left(\vec{\xi}\sqcup\vec{\eta'};\vec{\eta}\sqcup \vec{\xi'}\right) \approx L^2 \rH(\vec{U}\sqcup(1);\vec{V}\sqcup(-1)).
\end{equation}
by using \eqref{eq:xi_eta_rescale}. Therefore, we have the following result.
\begin{lm}
    \label{lm:asymptotics_T}
    Assume \eqref{eq:scale_beta_alpha_tau} where $(h,x)$ stays in a compact subset of $\realR^2$ and $t>0$ is fixed. Then 
    \begin{equation}
        \begin{split}
            &\lim_{L\to\infty}16\pi L^{-1/2}e^{\frac43L^{3/2}}T_{n,n'} (z;\beta,\beta',\alpha;\tau)\\
            &= 4(1-z)\left(1-\frac1z\right)^n \prod_{i=1}^{n} \left( \frac{1}{1-z} \int_{-1+\Gamma_\rL} \frac{\rd u_{i}}{2\pi\rmi} -\frac{z}{1-z} \int_{\Gamma_\rL} \frac{\rd u_{i}}{2\pi\rmi}\right)\left( \frac{1}{1-z} \int_{1+\Gamma_\rR} \frac{\rd v_{i}}{2\pi\rmi} -\frac{z}{1-z} \int_{\Gamma_\rR} \frac{\rd v_{i}}{2\pi\rmi}\right)\\
            &\quad \cdot \rC((-1)\sqcup \vec{V}; (1)\sqcup \vec{U})\cdot \rC(\vec{U};\vec{V}) \cdot \rH(\vec{U}\sqcup(1);\vec{V}\sqcup(-1)) \cdot \prod_{i=1}^n \frac{f_{h,x;t}(u_i)}{f_{h,x;t}(v_i)} \cdot \frac{f_{-h,-x;-t}(-1)}{f_{-h,-x;-t}(1)}
        \end{split}
    \end{equation}
    when $n'=1$, and 
    \begin{equation}
        \lim_{L\to\infty}16\pi L^{-1/2}e^{\frac43L^{3/2}}T_{n,n'} (z;\beta,\beta',\alpha;\tau) =0
    \end{equation}
    when $n'\ge 2$.   Moreover,
    the following uniform bound holds
    \begin{equation}
        \left|16\pi L^{-1/2}e^{\frac43L^{3/2}}T_{n,n'} (z;\beta,\beta',\alpha;\tau) \right| \le C^{n+n'}n^n(n')^{n'}\cdot \frac{(1+|z|)^n}{|1-z|^{n-n'}|z|^n},
    \end{equation}
    where $C$ does not depend on $n,n'$ or $z$.
\end{lm}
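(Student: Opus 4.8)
The plan is to \emph{transfer} the asymptotic analysis of \cite{Liu-Zhang25} for $D_{n,n'}$ (recorded as Lemma \ref{lm:asymptotics_D}) to $T_{n,n'}$ essentially verbatim, tracking the two structural differences: the overall factor $2$ and the extra factor $\rH\left(\vec{\xi}\sqcup\vec{\eta'};\vec{\eta}\sqcup \vec{\xi'}\right)$. First I would recall precisely which features of the integrand drove the proof of Lemma \ref{lm:asymptotics_D}: the super-exponential decay of the ratios $\prod_i f_{\beta,\alpha;\tau}(\xi_i)/f_{\beta,\alpha;\tau}(\eta_i)$ along the rescaled contours $u_i\in\Gamma_\rL\cup(-1+\Gamma_\rL)$, $v_i\in\Gamma_\rR\cup(1+\Gamma_\rR)$ under the scaling \eqref{eq:scale_beta_alpha_tau}, \eqref{eq:xi_eta_rescale}; the observation that only $n'=1$ survives in the limit; the polynomial-in-$L$ normalization of the Cauchy determinants under $\xi_i=\sqrt L u_i$, etc.; and the uniform bound that makes the $z$-contour integral and the double sum over $(n,n')$ amenable to dominated convergence. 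Since $\rH$ appears \emph{inside} the integrand and, under \eqref{eq:xi_eta_rescale}, scales as a homogeneous degree-$4$ polynomial in the variables, one has $\rH(\vec\xi\sqcup\vec{\eta'};\vec\eta\sqcup\vec{\xi'})\approx L^2\,\rH(\vec U\sqcup(1);\vec V\sqcup(-1))$, which merely shifts the power of $L$ in the normalization from $L^{3/2}$ to $L^{-1/2}$; crucially, a polynomial factor cannot disturb either the vanishing of the $n'\ge2$ terms or the dominated-convergence bound, because the decay it multiplies is super-exponential.

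Concretely, the key steps in order are: (i) substitute \eqref{eq:scale_beta_alpha_tau} and the rescaling \eqref{eq:xi_eta_rescale} into \eqref{eq:def_T}, isolating the factor $2\cdot\rH$ and writing $T_{n,n'} = 2\,\rH(\vec\xi\sqcup\vec{\eta'};\vec\eta\sqcup\vec{\xi'})\cdot(\text{the integrand of }D_{n,n'})$; (ii) compute the exact $L$-power of $\rH$ under the substitution, using that $\rH$ in \eqref{eq:def_rH} is a weighted sum of products of the power sums $\rS_k$, each of which scales like $L^{k/2}$ in the bulk variables, so $\rH$ is homogeneous of weight $4$ and picks up $L^2$, while the contribution of the $\xi'_1\approx-\sqrt L,\eta'_1\approx\sqrt L$ variables is accounted exactly as in the $f_{-h,-x;-t}(\pm1)$ boundary terms of Lemma \ref{lm:asymptotics_D}; (iii) invoke the pointwise convergence already established in \cite{Liu-Zhang25} for everything except $\rH$, multiply by the pointwise limit $\rH(\vec U\sqcup(1);\vec V\sqcup(-1))$, and collect the constant: $16\pi L^{3/2}e^{\frac43L^{3/2}}D_{n,n'}$ becomes $16\pi L^{-1/2}e^{\frac43L^{3/2}}T_{n,n'}$ after multiplying by $2$ (from the prefactor in \eqref{eq:def_T}) and dividing the $L$-normalization by $L^2$, giving the displayed limit with leading constant $4$ instead of $2$; (iv) for $n'\ge2$, the same estimate shows the extra $O(L^2)$ is absorbed into the $e^{-cL^{3/2}}$-type gain that forces those terms to $0$; (v) re-derive the uniform bound by bounding $|\rH|$ on the rescaled contours by a constant times $(1+\sum|u_i|+\sum|v_i|)^4$ and absorbing this polynomial into the super-exponential decay (shrinking the decay rate by an arbitrarily small constant), so the bound of Lemma \ref{lm:asymptotics_D} holds for $T_{n,n'}$ with the same form but possibly a larger constant $C$.

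The only genuine subtlety — and the step I expect to be the main obstacle to a fully rigorous writeup — is (ii)/(v): verifying that the factor $\rH(\vec\xi\sqcup\vec{\eta'};\vec\eta\sqcup\vec{\xi'})$ truly behaves like $L^2\,\rH(\vec U\sqcup(1);\vec V\sqcup(-1))$ \emph{uniformly} enough to pass to the limit and to re-establish the summable bound. One must check that the $\xi'_1,\eta'_1$ entries, which are $\approx\mp\sqrt L$ but with $O(1)$-scale fluctuations, contribute $\pm\sqrt L$ to the power sums $\rS_k$ in such a way that $L^{-2}\rH(\vec\xi\sqcup\vec{\eta'};\vec\eta\sqcup\vec{\xi'})\to\rH(\vec U\sqcup(1);\vec V\sqcup(-1))$ with the entry $1$ coming from $\xi'_1/\sqrt L\to-1$ appearing as $(1)$ on the appropriate side — here the sign bookkeeping in $\rS_k(\vec W;\vec W')=\sum w_i^k-\sum (w'_j)^k$ must be matched against which vector $\xi'$ lands in; a careful parity/sign check against the conventions in \eqref{eq:def_rH}, \eqref{eq:def_rI} is needed, and this is exactly where an error would silently change the limit. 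Beyond this, everything is a transcription of \cite{Liu-Zhang25}: since the paper explicitly says it will not reproduce those details, I would state Lemma \ref{lm:asymptotics_T} as proved once the $\rH$-scaling is pinned down and the polynomial-versus-super-exponential domination is invoked, referring the reader to \cite[Lemma 3.3, Lemma 3.4]{Liu-Zhang25} for the remaining estimates, the $n'\ge2$ vanishing, and the dominated-convergence justification of interchanging limit, $z$-integral, and the $(n,n')$-sum.
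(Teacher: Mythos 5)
Your proposal is correct and follows essentially the same route as the paper: the paper likewise observes that $T_{n,n'}$ differs from $D_{n,n'}$ only by the prefactor $2$ and the polynomially growing factor $\rH(\vec{\xi}\sqcup\vec{\eta'};\vec{\eta}\sqcup\vec{\xi'})$, which under \eqref{eq:xi_eta_rescale} behaves as $L^2\,\rH(\vec{U}\sqcup(1);\vec{V}\sqcup(-1))$ (each $\rS_k$ scaling as $L^{k/2}$ and $\rH$ having weight $4$), and is absorbed by the super-exponential decay of the $f$-ratios so that the analysis of \cite[Lemmas 3.3, 3.4]{Liu-Zhang25} carries over, shifting the normalization from $L^{3/2}$ to $L^{-1/2}$ and the constant from $2$ to $4$. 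The sign/placement bookkeeping you flag as the main subtlety does resolve as you expect, since $\eta'_1\approx\sqrt L$ sits in the first slot and $\xi'_1\approx-\sqrt L$ in the second, giving exactly $\rS_k(\vec{U}\sqcup(1);\vec{V}\sqcup(-1))$ in the limit.
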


\medskip

Now we are ready to prove \eqref{eq:main0}. Note that 
\begin{equation*}
\begin{split}
    &\prob\left( L \pi (tL^{-3/2}) \in (x_1,x_2), L^{1/2}\dL(tL^{-3/2}) \in (h_1,h_2) \ \big| \ \dL(1)= L \right)  \\
    &= \int_{h_1}^{h_2} \int_{x_1}^{x_2}  \frac{L^{-3/2}\density(hL^{-1/2}, L - hL^{-1/2},xL^{-1};tL^{-3/2})}{\fGUE(L)} \rd x \rd h,
\end{split}
\end{equation*}
To prove \eqref{eq:main0}, it is enough to show that
\begin{equation} \label{eq:densitytorp}
    \lim_{L \to \infty}\frac{L^{-3/2}\density(hL^{-1/2}, L - hL^{-1/2},xL^{-1};tL^{-3/2})}{\fGUE(L)} = \rp(h,x;t)
\end{equation}
uniformly when $(x,h)$ stays in a compact subset of $\realR^2$ and $t>0$ is fixed. We insert the asymptotics of $\fGUE$ in \eqref{eq:GUE_asymp}, Definition \ref{def:density}, and Lemma \ref{lm:asymptotics_T} into the above formula, and obtain 
\begin{equation}
    \begin{split}
        &\lim_{L \to \infty}\frac{L^{-3/2}\density(hL^{-1/2}, L - hL^{-1/2},xL^{-1};tL^{-3/2})}{\fGUE(L)}\\
        &= \oint_0 \frac{\rd z}{2\pi\rmi(1-z)^2} \sum_{n\ge 1} \frac{1}{(n!)^2}2(1-z)\left(1-\frac1z\right)^n \\
        &\quad \prod_{i=1}^{n} \left( \frac{1}{1-z} \int_{-1+\Gamma_\rL} \frac{\rd u_{i}}{2\pi\rmi} -\frac{z}{1-z} \int_{\Gamma_\rL} \frac{\rd u_{i}}{2\pi\rmi}\right)\left( \frac{1}{1-z} \int_{1+\Gamma_\rR} \frac{\rd v_{i}}{2\pi\rmi} -\frac{z}{1-z} \int_{\Gamma_\rR} \frac{\rd v_{i}}{2\pi\rmi}\right)\\
            &\quad \cdot \rC((-1)\sqcup \vec{V}; (1)\sqcup \vec{U})\cdot \rC(\vec{U};\vec{V}) \cdot \rH(\vec{U}\sqcup(1);\vec{V}\sqcup(-1)) \cdot \prod_{i=1}^n \frac{f_{h,x;t}(u_i)}{f_{h,x;t}(v_i)} \cdot \frac{f_{-h,-x;-t}(-1)}{f_{-h,-x;-t}(1)}\\
        &=\sum_{n\ge 1} \frac{1}{(n!)^2}(-2) \prod_{i=1}^n\int_{\Gamma_\rL} \frac{\rd u_{i}}{2\pi\rmi}\int_{\Gamma_\rR} \frac{\rd v_{i}}{2\pi\rmi}\\
        &\quad \cdot \rC((-1)\sqcup \vec{V}; (1)\sqcup \vec{U})\cdot \rC(\vec{U};\vec{V}) \cdot \rH(\vec{U}\sqcup(1);\vec{V}\sqcup(-1)) \cdot \prod_{i=1}^n \frac{f_{h,x;t}(u_i)}{f_{h,x;t}(v_i)} \cdot \frac{f_{-h,-x;-t}(-1)}{f_{-h,-x;-t}(1)},
    \end{split}
\end{equation}
where we evaluated the $z$-integral in the last step by deforming the $z$-contour to infinity. A direct comparison of the right hand side of the above formula and the definition of $\rp(h,x;t)$ in Definition \ref{def:rp} implies \eqref{eq:densitytorp}. This completes the proof.

\section{Proof of Proposition \ref{prop:joint_density}}
\label{sec:proof_density}

We only need to prove that $\rp$ is a joint probability density function. The other half of the statement follows from Proposition \ref{prop:properties_p} (2). Note that both functions $\rp$ and $\hat\rp$ are nonnegative by Theorem \ref{thm:main}.

We will prove the following two lemmas in this section. We thank the referee for pointing out that Lemma \ref{lm:bound_geodesic_location} follows directly from \cite[Lemma 5.2]{Ganguly-Hegde-Zhang23}. Nevertheless, our proof offers an independent and alternative perspective on this result. To the best of our knowledge, Lemma \ref{lm:bound_geodesic_length} has not been explicitly stated elsewhere.
\begin{lm}
    \label{lm:bound_geodesic_location}
    For any $\epsilon>0$, there exists a constant $M_1>0$, such that 
    \begin{equation}
    \label{eq:uniform_bound_pi}
\prob \left(|L \pi (tL^{-3/2})| \ge  M_1 \ \big| \ \dL(1)\ge L \right) \le \epsilon
\end{equation}
for sufficiently large $L$.
\end{lm}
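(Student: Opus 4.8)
\emph{Proof plan.} By the flip symmetry of the directed landscape (\cite[Lemma~10.2]{Dauvergne-Ortmann-Virag22}), the joint law of $(\pi(tL^{-3/2}),\dL(1))$ equals that of $(\pi(1-tL^{-3/2}),\dL(1))$, so it suffices to bound $\prob(|L\pi(1-tL^{-3/2})|\ge M_1\mid\dL(1)\ge L)$. Write $s=tL^{-3/2}$ and decompose the geodesic at time $1-s$: with $\psi(y):=\dL(0,0;y,1-s)$ and $\phi(y):=\dL(y,1-s;0,1)$ one has $\pi(1-s)=\argmax_y[\psi(y)+\phi(y)]$, $\dL(1)=\max_y[\psi(y)+\phi(y)]=\dL(0,0;0,1)$, and $\psi,\phi$ are \emph{independent}, being measurable with respect to the restrictions of $\dL$ to the disjoint time intervals $[0,1-s]$ and $[1-s,1]$. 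Thus the conditioning is the upper-tail event $\{\dL(0,0;0,1)\ge L\}$, $\psi$ has the law of the KPZ fixed point $\hKPZ(\cdot,1-s)$ (recall $\hKPZ(\cdot,t)\dequal\dL(0,0;\cdot,t)$ as processes, so the upper-tail-field estimates of \cite{Liu-Zhang25} apply to it), while $\phi$ lives on the short interval $[1-s,1]$ and hence has a steep parabola: $\phi(y)=-c\,y^2/s+\Xi(y)$ with $c>0$ and a non-parabolic remainder $\Xi$ which, after the scaling $\phi(s^{2/3}w)\dequal s^{1/3}\dL(0,0;w,1)$, is an Airy-type stationary field on the $w$-scale.

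The plan is a dyadic decomposition into the annuli $I_k:=\{y:2^kM_1L^{-1}\le|y|\le 2^{k+1}M_1L^{-1}\}$, $k\ge0$. On $\{\dL(1)\ge L\}\cap\{\pi(1-s)\in I_k\}$ the maximiser lies in $I_k$, so $\sup_{y\in I_k}[\psi(y)+\phi(y)]\ge L$; together with the parabolic bound on $\phi$ over $I_k$ and $\inf_{y\in I_k}cy^2/s=c\,4^kM_1^2/(tL^{1/2})$ this gives
\[
\sup_{y\in I_k}\psi(y)\ \ge\ L+\Big(\frac{c\,4^kM_1^2}{t}-\Theta_k\Big)L^{-1/2},\qquad \Theta_k:=L^{1/2}\sup_{y\in I_k}\Big(\phi(y)+\frac{c\,y^2}{s}\Big),
\]
where $\Theta_k$ is $\phi$-measurable, $L$-independent in law, and satisfies a stretched-exponential tail $\prob(\Theta_k\ge a)\le C(1+2^kM_1)e^{-c'a^{3/2}}$ (uniformly in $L$) by standard one-point tail and modulus-of-continuity estimates for $\dL(0,0;\cdot,1)$. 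Conditioning on $\phi$ and using $\psi\perp\phi$, the event probability is at most $\mathbb{E}_\phi\big[q_k\big(L+(c\,4^kM_1^2/t-\Theta_k)L^{-1/2}\big)\big]$, where $q_k(v):=\prob\big(\sup_{|y|\le 2^{k+1}M_1L^{-1}}\dL(0,0;y,1-s)\ge v\big)$ is an \emph{unconditional} probability.

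The key input is a window version of the KPZ-fixed-point upper tail: for absolute $C,A$ and all large $L$, $b\ge0$,
\[
q_k\big(L+bL^{-1/2}\big)\ \le\ C\,(2^kM_1)^{A}\,e^{-2b}\,\prob\big(\dL(1)\ge L\big),
\]
with $q_k\le1$ used when $b<0$; any positive exponential rate in $b$ would in fact suffice below. I will deduce this from the tightness estimates for the upper tail field in \cite{Liu-Zhang25} (which control, uniformly in $L$, the conditional tail of $\sup_{|x|\le R}\sqrt L(\hKPZ(xL^{-1},1)-L)$ given $\hKPZ(0,1)\ge L$), together with the Tracy--Widom asymptotics \eqref{eq:GUE_asymp} and the scaling of $\dL$ to pass between times $1-s$ and $1$ and to produce the factor $\prob(\dL(1)\ge L)$. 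Plugging $b=c\,4^kM_1^2/t-\Theta_k$ in, integrating against the law of $\Theta_k$ (the atypically large values of $\Theta_k$ for which $b<0$ matters contribute at most $\prob(\Theta_k\gtrsim L^{3/2})\le e^{-cL^{9/4}}$, negligible next to $\prob(\dL(1)\ge L)\asymp e^{-\frac43L^{3/2}}$), and using $\mathbb{E}[e^{2t^{1/3}\Theta_k}]\le C(1+2^kM_1)$, one obtains $\prob(\dL(1)\ge L,\ \pi(1-s)\in I_k)\le C(2^kM_1)^{A+1}e^{-2c\,4^kM_1^2/t}\prob(\dL(1)\ge L)+e^{-cL^{9/4}}$. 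Summing over $k\ge0$ (for the macroscopic annuli $|y|\gtrsim1$ the forced deviation of $\psi$ is of order $L^{3/2}$, so those terms are already $\le e^{-cL^{9/4}}$) gives $\prob(\dL(1)\ge L,\ |\pi(1-s)|\ge M_1L^{-1})\le C(t)M_1^{A+1}e^{-cM_1^2/t}\prob(\dL(1)\ge L)+(\log L)e^{-cL^{9/4}}$; dividing by $\prob(\dL(1)\ge L)$, choosing $M_1$ large so that $C(t)M_1^{A+1}e^{-cM_1^2/t}\le\epsilon/2$, and then $L$ large, completes the proof, and the bound for $\pi(tL^{-3/2})$ follows by the flip symmetry invoked at the start.

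The main obstacle is expected to be the window estimate for $q_k$ with the right constant structure — namely that a window of width $\asymp 2^kM_1L^{-1}$ contributes only a polynomial factor $(2^kM_1)^A$, not a factor that would swamp the minuscule $\prob(\dL(1)\ge L)\asymp e^{-\frac43L^{3/2}}$. The naive route fails: splitting ``$\dL(0,0;\cdot,1-s)$ exceeds the level somewhere in the window'' into ``it exceeds the level on a mesh of points'' plus ``its oscillation over a mesh cell is large'' is useless, because the oscillation event has $\Theta(1)$ probability rather than $O(\prob(\dL(1)\ge L))$. One must instead use a genuinely joint bound expressing that, conditioned on $\dL(0,0;0,1)$ being within $O(L^{-1/2})$ of $L$, the local oscillations of $\dL(0,0;\cdot,1-s)$ at scale $L^{-1}$ are $O(L^{-1/2})$-tight — precisely the content underlying the existence of the upper tail field in \cite{Liu-Zhang25}, which is why that work is the natural source. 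The remaining ingredients — independence across disjoint time intervals, the parabolic bound for the short-interval landscape, and the stretched-exponential control of $\Theta_k$ on $O(1)$ windows — are routine.
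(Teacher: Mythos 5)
Your architecture --- decompose $\dL(1)=\max_y[\psi(y)+\phi(y)]$ at the time slice, then play the short-interval parabola against the long-interval field over dyadic annuli --- is genuinely different from the paper's. The paper instead runs a geodesic-ordering argument (Lemma \ref{lm:simple_lemma}): if $\pi_{0,0;0,1}$ reaches $m$ at time $\tau$ it must cross the geodesic $\pi_{m,0;m,\tau}$, whence $\dL(m,0;0,1)-\dL(0,0;0,1)\ge-\left(\dL(0,0;m,\tau)-\dL(m,0;m,\tau)\right)$ by monotonicity of the difference along that geodesic. This reduces everything to one- and two-point marginals, handled by the f.d.d.\ convergence \eqref{eq:UpperTailFieldConvergence}, the Tracy--Widom tail \eqref{eq:GUE_asymp}, and the decorrelation of the Airy$_2$ process at large spatial separation --- and it never requires controlling a supremum of the conditioned field over a spatial window. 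That is precisely the estimate on which your entire proof hinges.

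That estimate, $q_k(L+bL^{-1/2})\le C(2^kM_1)^Ae^{-2b}\prob(\dL(1)\ge L)$ uniformly in $L$, is the gap. You correctly flag it as the main obstacle, but you do not prove it, and it does not follow from what you cite. The convergence to the upper tail field, as stated here and in \cite{Liu-Zhang25} (see \eqref{eq:UpperTailFieldConvergence}), is convergence of finite-dimensional distributions only; a supremum over a window of width $2^{k+1}M_1L^{-1}$, uniform in $L$ and with explicit \emph{polynomial} dependence on $2^kM_1$ as $k\to\infty$, is a strictly stronger quantitative tightness statement whose proof would be comparable in difficulty to the lemma itself. The elementary window bound that is actually available (the argmax-plus-independent-increment trick of Lemma \ref{lm:max_Airy}, transported to the slice $1-s$ by flip symmetry) carries a prefactor of order $\bigl(1-\FGUE(R^2t^{-4/3}+O(1))\bigr)^{-1}\asymp\exp\left(\tfrac43R^3t^{-2}\right)$ in the window half-width $R=2^{k+1}M_1$, which loses against your parabolic gain $\exp(-2\cdot4^kM_1^2/t)$ as soon as $2^kM_1\gtrsim t$; so the dyadic sum cannot be closed with the tools at hand. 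A secondary issue: using $q_k\le1$ on all of $\{b<0\}$, i.e.\ on $\{\Theta_k>c\,4^kM_1^2/t\}$, produces a term of constant order in $L$ that is \emph{not} multiplied by $\prob(\dL(1)\ge L)$ and therefore blows up after dividing by it; you must keep the exponential bound for $b$ negative down to order $-L^{3/2}$ and reserve the trivial bound only for $\Theta_k\gtrsim L^{3/2}$, which your parenthetical gestures at but does not state. Unless you supply the uniform-in-$L$, polynomial-in-width window estimate, the proof does not close; the paper's Lemma \ref{lm:simple_lemma} is exactly the device that makes such an estimate unnecessary.
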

\begin{lm}
    \label{lm:bound_geodesic_length}
    Suppose $\epsilon>0$ and $M_1>0$ are any two fixed constants.
    \begin{enumerate}[(1)]
    \item There exists a constant $M_2$, such that 
    \begin{equation} 
    \label{eq:upperbound_geodesic_length}
\prob\left(  L^{1/2}   \dL(tL^{-3/2})  > M_2 \ \big| \ \dL(1)\ge L \right) \le \epsilon
\end{equation}
for sufficiently large $L$.
\item There exists a constant $M_3$, such that 
    \begin{equation} 
    \label{eq:lowerbound_geodesic_length}
\prob\left( L \pi (tL^{-3/2}) \in (-M_1, M_1), L^{1/2}   \dL(tL^{-3/2})  <- M_3 \ \big| \ \dL(1)\ge L \right) \le \epsilon
\end{equation}
for sufficiently large $L$.
\end{enumerate}
\end{lm}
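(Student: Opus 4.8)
The plan is to prove both parts of Lemma~\ref{lm:bound_geodesic_length} from one elementary device, the metric composition law at the tiny intermediate time $\tau:=tL^{-3/2}$,
\[
\dL(1)=\dL(\tau)+\dL\bigl(\pi(\tau),\tau;0,1\bigr)=\max_{y\in\realR}\Bigl[\dL(0,0;y,\tau)+\dL(y,\tau;0,1)\Bigr],
\]
with maximiser $\pi(\tau)$ and $\dL(0,0;\cdot,\tau)\dequal\hKPZ(\cdot,\tau)$. Two structural facts are used throughout. First, the restrictions of the directed landscape to the disjoint time strips $[0,\tau]$ and $[\tau,1]$ are independent, so $\hKPZ(\cdot,\tau)$ and $g(\cdot):=\dL(\cdot,\tau;0,1)$ are independent, and $g(\cdot)\dequal\hKPZ(\cdot,1-\tau)$ by time reversal and time shift. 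Second, by $1\!:\!2\!:\!3$ scaling $\hKPZ(y,\tau)\dequal\tau^{1/3}\hKPZ(\tau^{-2/3}y,1)$, so $\hKPZ(\cdot,\tau)+y^2/\tau$ has the rescaled parabolic Airy law: $\hKPZ(\cdot,\tau)$ is of size $O(\tau^{1/3})=O(L^{-1/2})$ on the window $|y|\lesssim\tau^{2/3}=O(L^{-1})$, is dominated by the deterministic parabola $-y^2/\tau$ for $|y|\gg\tau^{2/3}$, and $\max_y\hKPZ(y,\tau)\dequal\tau^{1/3}\max_w\hKPZ(w,1)$ is just a rescaling of a single random variable. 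Beyond this I would use the tail asymptotics \eqref{eq:GUE_asymp}, an upper-tail bound $\prob(\max_w\hKPZ(w,1-\tau)\ge u)\le C\,u^{-3/2}e^{-\frac43u^{3/2}}$ (with exponential rate and polynomial prefactor matching $\fGUE$, uniformly for small $\tau$), and the light cubic-exponential lower tail $\prob(\hKPZ(w,1)\le-s)\le Ce^{-cs^{3}}$.

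For part~(1): bound $L^{1/2}\dL(\tau)=L^{1/2}\hKPZ(\pi(\tau),\tau)\le L^{1/2}\max_y\hKPZ(y,\tau)=t^{1/3}\max_w\hKPZ(w,1)=:\mathcal M$. Since $\dL(1)\le\max_y\hKPZ(y,\tau)+\max_y g(y)$ and $\mathcal M$ is independent of $\max_y g$, the event $\{\mathcal M>M_2,\ \dL(1)\ge L\}$ forces $\max_y g(y)\ge L-L^{-1/2}\mathcal M$; conditioning on the $[0,\tau]$-strip and using the $\max_w\hKPZ$ tail together with $(L-L^{-1/2}m)^{3/2}\ge L^{3/2}-\tfrac32 m$ (the event $\mathcal M>L^{3/2}$ being super-exponentially negligible), one gets
\[
\prob\bigl(\mathcal M>M_2,\ \dL(1)\ge L\bigr)\ \le\ C\,L^{-3/2}e^{-\frac43L^{3/2}}\,\mathbb E\!\left[e^{2\mathcal M}\mathbf 1_{\{\mathcal M>M_2\}}\right]+o\bigl(L^{-3/2}e^{-\frac43L^{3/2}}\bigr),
\]
and the expectation vanishes as $M_2\to\infty$ because the tail of $\mathcal M$ is cubic-exponential and dominates $e^{2\mathcal M}$. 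Dividing by $\prob(\dL(1)\ge L)=1-\FGUE(L)\asymp L^{-3/2}e^{-\frac43L^{3/2}}$ gives \eqref{eq:upperbound_geodesic_length}. For part~(2): the key point is that $\dL(\tau)<-M_3L^{-1/2}$ together with $\dL(1)\ge L$ forces $\dL(\pi(\tau),\tau;0,1)=\dL(1)-\dL(\tau)\ge L+M_3L^{-1/2}$, hence $\max_y\dL(y,\tau;0,1)\ge L+M_3L^{-1/2}$; by the same tail bound and $(L+M_3L^{-1/2})^{3/2}\ge L^{3/2}+\tfrac32M_3$ this has probability $\le C\,L^{-3/2}e^{-\frac43L^{3/2}}e^{-2M_3}$, so the conditional probability is $\le Ce^{-2M_3}\le\epsilon$ for $M_3$ large. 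The cutoff $L\pi(\tau)\in(-M_1,M_1)$ is not needed for this bound itself; it is the form in which \eqref{eq:lowerbound_geodesic_length} is combined with Lemma~\ref{lm:bound_geodesic_location} to prove Proposition~\ref{prop:joint_density} (choose $M_1$ from Lemma~\ref{lm:bound_geodesic_location}, then $M_2,M_3$ from Lemma~\ref{lm:bound_geodesic_length}; the conditional law of $\bigl(L\pi(tL^{-3/2}),L^{1/2}\dL(tL^{-3/2})\bigr)$ then concentrates on a fixed box up to an error $3\epsilon$, so by \eqref{eq:newmain0} any subsequential weak limit is a probability measure agreeing with $\rp\,dx\,dh$ on open rectangles, forcing $\int\!\int\rp=1$; then $\int\!\int\hat\rp=1$ follows from \eqref{eq:relation_two_densities1} by Tonelli).

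The step I expect to be the genuine obstacle is making the ``supremum over the spatial variable'' arguments rigorous and uniform in $L$. The available field-level inputs are essentially finite dimensional (the Gaussian window \eqref{eq:GaussianLimit_OnePoint} and the upper-tail-field convergences \eqref{eq:UpperTailFieldConvergence}, \eqref{eq:UpperTailFieldConvergence1}), so controlling $\max_y\dL(y,\tau;0,1)$, $\max_y\hKPZ(y,\tau)$ and $\min_{|y|\le M_1L^{-1}}\hKPZ(y,\tau)$ requires genuine modulus-of-continuity and tail--union-bound estimates for the directed landscape and KPZ fixed point that survive the severe conditioning $\{\dL(1)\ge L\}$. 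In particular the upper tail of the \emph{maximum} of the parabolic KPZ fixed point must be pinned down with a polynomial prefactor no worse than that of $\fGUE$ (equivalently, one first confines the relevant maxima to the parabolic window $|y|\lesssim\tau^{2/3}$ so that only $O(1)$ unit intervals contribute), and the two independent strips $[0,\tau]$ and $[\tau,1]$, which the event $\{\dL(1)\ge L\}$ couples, must be disentangled carefully. This is where the nontrivial probabilistic arguments and the recent upper-tail-field estimates of \cite{Liu-Zhang25} advertised in the introduction would be brought to bear.
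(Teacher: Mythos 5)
Your skeleton (decompose $\dL(1)=\max_y[\dL(0,0;y,\tau)+\dL(y,\tau;0,1)]$ at $\tau=tL^{-3/2}$, use independence of the two time strips, compare against the sharp asymptotics \eqref{eq:GUE_asymp}) is the same as the paper's, but both of your final bounds rest on a tail estimate that is false: $\prob(\max_y\hKPZ(y,1-\tau)\ge u)\le C\,u^{-3/2}e^{-\frac43u^{3/2}}$. By Johansson's identity, $\sup_w(\cA_2(w)-w^2)$ is a rescaled GOE Tracy--Widom variable; its upper tail has the same exponential rate $e^{-\frac43u^{3/2}}$ as $\fGUE$ but prefactor $u^{-3/4}$, not $u^{-3/2}$ --- the entropic cost of maximizing over the $O(1)$ parabolic window of the time-$(1-\tau)$ field is exactly this lost power. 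Consequently your part (1) bound becomes $C L^{-3/4}e^{-\frac43L^{3/2}}\,\mathbb{E}[e^{2\mathcal M}\mathbf 1_{\{\mathcal M>M_2\}}]$ and your part (2) bound becomes $CL^{-3/4}e^{-\frac43L^{3/2}}e^{-2M_3}$; dividing by $1-\FGUE(L)\asymp L^{-3/2}e^{-\frac43L^{3/2}}$ leaves a factor $L^{3/4}\to\infty$ and both conclusions fail. Your suggested repair (``confine the maxima to the parabolic window'') does not help here, because for $\dL(\cdot,\tau;0,1)\dequal\hKPZ(\cdot,1-\tau)$ that window is of size $O(1)$, which is precisely where the $u^{-3/4}$ prefactor comes from. (A minor further slip: the upper tail of $\mathcal M$ is $\exp(-cu^{3/2})$, not cubic-exponential; this still dominates $e^{2\mathcal M}$, so that step survives.)

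The spatial cutoff $|L\pi(\tau)|<M_1$ in part (2), which you dismiss as ``not needed for this bound itself,'' is in fact the mechanism that rescues the prefactor: it lets one replace the global maximum by $\max_{|y|\le M_1L^{-1}}\dL(y,\tau;0,1)$, a maximum over a window of width $O(L^{-1})=o(\tau^{2/3}\cdot\text{anything fixed})$ once rescaled. The paper's Lemma \ref{lm:max_Airy} bounds this restricted maximum by a one-point quantity: attaching the independent increment $\dL(0,0;y_0,\tau)$ from the lower strip at the argmax $y_0$ gives
\begin{equation*}
\prob\Bigl(\max_{|y|\le m}\dL(y,\tau;0,1)\ge\ell\Bigr)\le\frac{\prob(\dL(0,0;0,1)\ge\ell+\ell')}{1-\FGUE(\ell'\tau^{-1/3}+m^2\tau^{-4/3})},
\end{equation*}
and with $m=M_1L^{-1}$, $\ell'=-\tfrac12M_3L^{-1/2}$ the denominator is an $L$-independent constant while the numerator carries the full $L^{-3/2}$ prefactor, yielding the conditional bound $\approx 2e^{-M_3}/(1-\FGUE(-\tfrac12M_3t^{-1/3}+M_1^2t^{-4/3}))$. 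For part (1) the paper avoids the global maximum of the upper strip entirely: it splits on whether $\dL(0,\tau;0,1)\le L-M_2'L^{-1/2}$ (whose conditional probability converges to $\prob(\LUTfield(0,-t)\le-M_2')$ by \eqref{eq:UpperTailFieldConvergence}) or not; in the latter case independence factorizes the bound into $\bigl[\prob(\dL(0,\tau;0,1)>L-M_2'L^{-1/2})/\prob(\dL(1)\ge L)\bigr]\cdot\prob(\max_y\dL(0,0;y,t)\ge M_2)$, where the first factor tends to the constant $e^{2(M_2'-t/3)}$ and the second involves only the $O(1)$-time field, so no sharp prefactor is ever required of a spatial maximum. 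You would need to restructure both parts along these lines for the argument to close.
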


Assuming these two lemmas, we have 
\begin{equation}
    \begin{split}
    &\prob\left(L \pi (tL^{-3/2}) \in (-M_1,M_1), L^{1/2}\dL(tL^{-3/2}) \in (-M_3,M_2) \ \big| \ \dL(1)\ge L \right) \\
    &\ge 1- \prob \left(|L \pi (tL^{-3/2})| \ge  M_1 \ \big| \ \dL(1)\ge L \right)-\prob\left(  L^{1/2}   \dL(tL^{-3/2})  > M_2 \ \big| \ \dL(1)\ge L \right) \\
    &\quad - \prob\left( L \pi (tL^{-3/2}) \in (-M_1, M_1), L^{1/2}   \dL(tL^{-3/2})  <- M_3 \ \big| \ \dL(1)\ge L \right)\\
    & \ge 1-3\epsilon
    \end{split}
\end{equation}
for sufficiently large $L$. By taking $L\to\infty$ and applying Theorem \ref{thm:main}, we obtain 
\begin{equation}
     \int_{-M_3}^{M_2} \int_{-M_1}^{M_1} \rp(h,x;t) \rd x\rd h\ge 1-3\epsilon,
\end{equation}
which implies 
\begin{equation}
    \int_{\realR} \int_\realR \rp(h,x;t) \rd x\rd h\ge 1-3\epsilon.
\end{equation}
Note that $\epsilon$ could be arbitrarily small. Thus, $\iint_{\realR^2}\rp(h,x;t) \rd x\rd h \ge 1 $. On the other hand, for arbitrary $M_1',M_2'>0$
\begin{equation}
    \int_{-M_2'}^{M_2'} \int_{-M_1'}^{M_1'} \rp(h,x;t) \rd x\rd h = \lim_{L\to\infty}\prob\left(L \pi (tL^{-3/2}) \in (-M_1',M_1'), L^{1/2}\dL(tL^{-3/2}) \in (-M_2',M_2') \ \big| \ \dL(1)\ge L \right) ,
\end{equation}
which is the limit of a sequence of probabilities that are bounded by $1$. Thus, $\int_{-M_2'}^{M_2'} \int_{-M_1'}^{M_1'} \rp(h,x;t) \rd x\rd h\le 1$ for any  $M_1',M_2'>0$. This further implies $\int_{\realR}\int_\realR\rp(h,x;t) \rd x\rd h \le 1 $. Combining the arguments above, we obtain the desired statement $\int_{\realR}\int_\realR\rp(h,x;t) \rd x\rd h = 1 $.

It remains to prove the two lemmas \ref{lm:bound_geodesic_location} and \ref{lm:bound_geodesic_length}.

\subsection{Bound of the conditional geodesic location}
\label{sec:geodecis_location}

In this subsection, we use a probabilistic argument to prove Lemma \ref{lm:bound_geodesic_location}. Note that $\pi(\tau)$ and $-\pi(\tau)$ have the same distribution. Thus, it is sufficient to show that there exists $M_1$ such that 
\begin{equation}
    \label{eq:onesidebound_geodesic_location}
    \prob\left( \pi_{0,0;0,1} (tL^{-3/2}) \ge M_1L^{-1}  \ \big| \ \dL(0,0;0,1)\ge L\right) \le \epsilon/2
\end{equation}
for sufficiently large $L$. Here we use the notation $\pi_{0,0;0,1}(\tau) $ and $\dL(0,0;0,1)$, instead of their abbreviations $\pi(\tau)$ and $\dL(1)$, to avoid notation confusions since we will consider other geodesics $\pi_{x,s;y,t}$ and directed landscape values $\dL(x,s;y,t)$.

We need the following simple lemma.
\begin{lm}
    \label{lm:simple_lemma}
Assume that $0<\tau<1$, $m>0$, $\ell,\ell',\ell''\in\realR$. We have 
\begin{equation}
    \begin{split}
        &\prob\left(\pi_{0,0;0,1}(\tau) \ge m, \dL(0,0;0,1)\ge \ell\right)\\
        &\le \prob\left(\dL(0,\tau;0,1)\le \ell', \dL(0,0;0,1)\ge \ell\right) + \prob\left(\dL(m,0;0,1)\ge \ell'',\dL(0,0;0,1)\ge \ell\right)\\
        &\quad + \prob\left(\dL(0,\tau;0,1)> \ell'\right)\cdot \prob\left(\dL(0,0;m,\tau)-\dL(0,0;0,\tau)\ge \ell-\ell''\right).
    \end{split}
\end{equation}    
\end{lm}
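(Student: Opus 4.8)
The plan is to decompose the event $\{\pi_{0,0;0,1}(\tau)\ge m\}$ according to where the geodesic is at time $\tau$ and then split into "good" and "bad" cases for the auxiliary thresholds $\ell',\ell''$. First, recall from the definition of the geodesic \eqref{eq:def_geodesic} that on the event $\{\pi_{0,0;0,1}(\tau)=a\}$ we have the additivity $\dL(0,0;0,1)=\dL(0,0;a,\tau)+\dL(a,\tau;0,1)$. Since the geodesic is unique a.s. and $\pi_{0,0;0,1}$ is a continuous path with $\pi_{0,0;0,1}(0)=0$, the event $\{\pi_{0,0;0,1}(\tau)\ge m\}$ with $m>0$ forces the path to cross every intermediate level; in particular, on this event the reverse triangle inequality gives $\dL(0,0;0,1)\le \dL(0,0;m,\tau)+\dL(m,\tau;0,1)$ after routing through the point $(m,\tau)$... but more usefully, I would instead directly bound using monotonicity: on $\{\pi_{0,0;0,1}(\tau)\ge m\}$, the geodesic passes through some point $(a,\tau)$ with $a\ge m$, and I want to relate $\dL(a,\tau;0,1)$ and $\dL(m,\tau;0,1)$, as well as $\dL(0,0;a,\tau)$ and the value along a path through $(m,\tau)$.

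The key step is to introduce the threshold $\ell'$ for the "backward" length $\dL(0,\tau;0,1)$ and split accordingly. On the intersection event $\{\pi_{0,0;0,1}(\tau)\ge m,\ \dL(0,0;0,1)\ge \ell\}$, either (a) $\dL(0,\tau;0,1)\le \ell'$, which is contained in $\{\dL(0,\tau;0,1)\le\ell',\ \dL(0,0;0,1)\ge\ell\}$, or (b) $\dL(0,\tau;0,1)>\ell'$. In case (b), I would use the reverse triangle inequality routing the geodesic from $(m,0)$ through the point $(\pi_{0,0;0,1}(\tau),\tau)$, together with the fact that on $\{\pi_{0,0;0,1}(\tau)\ge m\}$ the segment of the geodesic from $(0,0)$ to $(\pi_{0,0;0,1}(\tau),\tau)$ lies "to the right of $m$ at time $\tau$", to compare $\dL(m,0;0,1)$ with $\dL(0,0;0,1)$ corrected by the difference $\dL(0,0;m,\tau)-\dL(0,0;0,\tau)$. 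Concretely, I expect the bound to take the shape: either $\dL(m,0;0,1)\ge \ell''$ (first term of case (b)), or $\dL(m,0;0,1)<\ell''$ and then the extra length needed, $\dL(0,0;m,\tau)-\dL(0,0;0,\tau)$, must be at least $\ell-\ell''$; the independence of $\dL(0,\tau;0,1)$ from the quartet $\dL(\cdot,0;\cdot,\tau)$ (disjoint time ranges) yields the product structure in the third term. I would make the routing precise by writing, on $\{\pi_{0,0;0,1}(\tau)\ge m\}$,
\begin{equation*}
\ell\le \dL(0,0;0,1)=\dL(0,0;\pi_{0,0;0,1}(\tau),\tau)+\dL(\pi_{0,0;0,1}(\tau),\tau;0,1),
\end{equation*}
bounding the second summand by $\dL(0,\tau;0,1)$ up to the geodesic-crossing comparison, and the first summand via a rerouting through $(m,\tau)$ from $(m,0)$; a union bound over the two sub-cases of case (b) then produces exactly the last two terms.

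The main obstacle I anticipate is making the geometric comparison in case (b) rigorous: specifically, justifying that when the geodesic is at position $\ge m$ at time $\tau$, one can "pay" to reroute from $(m,0)$ and only lose the controlled amount $\dL(0,0;m,\tau)-\dL(0,0;0,\tau)$, rather than an uncontrolled quantity. This requires carefully invoking the reverse triangle inequality in the right direction (recall it goes $\dL(x,s;x',s')+\dL(x',s';y,t)\le\dL(x,s;y,t)$) together with monotonicity/planarity properties of geodesics in the directed landscape, and being careful about which endpoint comparisons are automatic versus which need a crossing argument. The independence claim in the third factor is comparatively routine: $\dL(0,\tau;0,1)$ depends only on the directed landscape restricted to the time strip $[\tau,1]$, while $\dL(0,0;m,\tau)$ and $\dL(0,0;0,\tau)$ depend only on the strip $[0,\tau]$, and these strips are disjoint, so the independence of the directed landscape over disjoint time ranges applies directly.
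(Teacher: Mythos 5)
Your three-way decomposition is exactly the paper's: the event is covered by $\{\dL(0,\tau;0,1)\le\ell',\,\dL(0,0;0,1)\ge\ell\}$, $\{\dL(m,0;0,1)\ge\ell'',\,\dL(0,0;0,1)\ge\ell\}$, and a residual event $A$ on which $\dL(0,\tau;0,1)>\ell'$ and $\dL(m,0;0,1)<\ell''$; and your independence claim for the third term (disjoint time strips) is also the one the paper uses. However, the central geometric comparison, which is the actual content of the lemma, is left as an acknowledged ``obstacle'' in your write-up, so the proof is not complete. Here is what is missing. One introduces the auxiliary geodesic $\pi'=\pi_{m,0;m,\tau}$ from $(m,0)$ to $(m,\tau)$. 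Since $\pi_{0,0;0,1}$ starts at $0<m$ and satisfies $\pi_{0,0;0,1}(\tau)\ge m=\pi'(\tau)$, continuity forces a crossing time $\tau'\in(0,\tau]$ with $\pi_{0,0;0,1}(\tau')=\pi'(\tau')$. Rerouting at the crossing point gives the pathwise bound
\begin{equation*}
\dL(m,0;0,1)-\dL(0,0;0,1)\ \ge\ \dL(m,0;\pi'(\tau'),\tau')-\dL(0,0;\pi'(\tau'),\tau'),
\end{equation*}
and the reverse triangle inequality applied along $\pi'$ shows that $\tau'\mapsto \dL(0,0;\pi'(\tau'),\tau')-\dL(m,0;\pi'(\tau'),\tau')$ is non-decreasing, so the worst case is $\tau'=\tau$, where $\pi'(\tau)=m$. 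Hence on $A$ one gets $\dL(0,0;m,\tau)-\dL(m,0;m,\tau)\ge\ell-\ell''$, i.e.\ $A\subseteq B_1\cap B_2$ with $B_1=\{\dL(0,0;m,\tau)-\dL(m,0;m,\tau)\ge\ell-\ell''\}$ and $B_2=\{\dL(0,\tau;0,1)>\ell'\}$. Note that the pathwise inequality naturally produces $\dL(m,0;m,\tau)$, \emph{not} $\dL(0,0;0,\tau)$; the passage to the quantity appearing in the statement is only distributional, via the shift and flip invariances of the directed landscape, which give $(\dL(0,0;m,\tau),\dL(m,0;m,\tau))\dequal(\dL(0,0;m,\tau),\dL(0,0;0,\tau))$. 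Your sketch conflates these two quantities and never addresses this last step. Your alternative idea of routing directly through $(\pi_{0,0;0,1}(\tau),\tau)$ could be made to work, but it reduces to the quadrangle inequality $\dL(0,0;a,\tau)-\dL(m,0;a,\tau)\le\dL(0,0;m,\tau)-\dL(m,0;m,\tau)$ for $a\ge m$, whose proof is precisely the crossing-plus-monotonicity argument above, so it is not a shortcut around the missing step.
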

\begin{proof}[Proof of Lemma \ref{lm:simple_lemma}]
    Note that the event $\{\pi_{0,0;0,1}(\tau) \ge m, \dL(0,0;0,1)\ge \ell\}$ is a subset of the union of three events $\{\dL(0,\tau;0,1)\le \ell', \dL(0,0;0,1)\ge \ell\}$, $\{\dL(m,0;0,1)\ge \ell'',\dL(0,0;0,1)\ge \ell\}$, and $A$, where 
    \begin{equation}
        A =\{\pi_{0,0;0,1}(\tau)\ge m, \dL(0,0;0,1)\ge \ell, \dL(0,\tau;0,1)> \ell',\dL(m,0;0,1)< \ell'' \} .
    \end{equation}
    Thus, it is sufficient to show that 
    \begin{equation}
        \label{eq:inequality_probA}
        \prob(A) \le \prob\left(\dL(0,\tau;0,1)> \ell'\right)\cdot \prob\left(\dL(0,0;m,\tau)-\dL(0,0;0,\tau)\ge \ell-\ell''\right).
    \end{equation}
    Now denote the geodesic from $(m,0)$ to $(m,\tau)$ by $\pi'(\tau') = \pi_{m,0;m,\tau}(\tau')$, $0\le \tau'\le \tau$. Note that $\pi_{0,0;0,1}(\tau)\ge m$ implies that there exists some $0< \tau'\le \tau$, such that $\pi_{0,0;0,1}(\tau')=\pi'(\tau')$, and hence
    \begin{equation}
        \begin{split}
            &\dL(m,0;0,1) - \dL(0,0;0,1) \\
            &\ge (\dL(m,0;\pi'(\tau'),\tau') + \dL(\pi_{0,0;0,1}(\tau'),\tau';0,1) )- (\dL(0,0;\pi'(\tau'),\tau')+ \dL(\pi_{0,0;0,1}(\tau'),\tau';0,1) )\\
            &\ge -\max_{\tau'\in(0,\tau]} \left\{\dL(0,0;\pi'(\tau'),\tau')-\dL(m,0;\pi'(\tau'),\tau')\right\}\\
            &=-(\dL(0,0;\pi'(\tau)=m,\tau)-\dL(m,0;\pi'(\tau)=m,\tau)),
        \end{split}        
    \end{equation}
    where the last inequality follows from the following monotonicity of $\dL(0,0;\pi'(\tau'),\tau')-\dL(m,0;\pi'(\tau'),\tau')$
    \begin{equation}
    \begin{split}
    &\dL(0,0;\pi'(\tau'),\tau')-\dL(m,0;\pi'(\tau'),\tau') \\
    &\ge \dL(0,0;\pi'(\tau'-\epsilon),\tau'-\epsilon) +\dL(\pi'(\tau'-\epsilon),\tau'-\epsilon;\pi'(\tau'),\tau')-\dL(m,0;\pi'(\tau'),\tau')\\
    &= \dL(0,0;\pi'(\tau'-\epsilon),\tau'-\epsilon)- \dL(m,0;\pi'(\tau'-\epsilon),\tau'-\epsilon),\quad 0\le \epsilon<\tau'.
    \end{split}
    \end{equation}
    We remark that similar monotonicity for semi-infinite geodesic was obtained in \cite[Lemma 16]{Bhatia24}.

    As a result, $A\subseteq B=B_1\cap B_2$, where
    \begin{equation}
        B_1= \left\{ \dL(0,0;m,\tau) -\dL(m,0;m,\tau) \ge \ell-\ell''\right\},\quad B_2=\{\dL(0,\tau;0,1)> \ell'\},
    \end{equation}
    and we further have 
    \begin{equation}
        \prob(A) \le \prob(B_1\cap B_2)=\prob(B_1)\prob(B_2),
    \end{equation}
    where the last equation comes from the independence of increments of the directed landscape in disjoint time windows. Finally, using the following symmetries of the law of the directed landscape (see Lemma 10.2 of \cite{Dauvergne-Ortmann-Virag22})
    \begin{equation}
    \begin{split}(\dL(0,0;m,\tau),\dL(m,0;m,\tau))&\dequal (\dL(-m,0;0,\tau),\dL(0,0;0,\tau))\\
    &\dequal (\dL(0,-\tau;m,0),\dL(0,-\tau;0,0))\\
    &\dequal (\dL(0,0;m,\tau),\dL(0,0;0,\tau)),
    \end{split}
    \end{equation}
    we have
    \begin{equation}
        \prob(B_1) = \prob\left(\dL(0,0;m,\tau)-\dL(0,0;0,\tau)\ge \ell-\ell''\right).
    \end{equation}
    The inequality \eqref{eq:inequality_probA} follows immediately. This completes the proof of the lemma.
\end{proof}

Now we choose $\tau=tL^{-3/2}$, $m=M_1L^{-1}$, $\ell=L$, $\ell'= L - M'_1L^{-1/2}$ and $\ell''=L - M_1L^{-1/2}$ in Lemma \ref{lm:simple_lemma}, where $M_1$ and $M_1'$ are constants to be chosen later. We have 
\begin{equation}
    \label{eq:conditionalGeodesicLocation_split}
    \begin{split}
        &\prob\left(\pi_{0,0;0,1} (tL^{-3/2}) \ge M_1L^{-1}  \ \big| \ \dL(0,0;0,1)\ge L\right)\\
        &\le \prob\left(\dL(0,tL^{-3/2};0,1)\le L -M'_1L^{-1/2}\ \big| \ \dL(0,0;0,1)\ge L\right) \\
        &\quad + \prob\left(\dL(M_1L^{-1},0;0,1)\ge L -M_1L^{-1/2}\ \big| \ \dL(0,0;0,1)\ge L\right)\\
        &\quad + \prob\left(\dL(0,tL^{-3/2};0,1)> L -M'_1L^{-1/2}\right)/\prob\left( \dL(0,0;0,1)\ge L\right)\\
        &\qquad\cdot \prob\left(\dL(0,0;M_1L^{-1},tL^{-3/2})-\dL(0,0;0,tL^{-3/2})\ge M_1L^{-1/2}\right).
    \end{split}
\end{equation}
Recall \eqref{eq:UpperTailFieldConvergence} and the relation \eqref{eq:relation_two_UTfields}. Also note that the grounded upper tail field at time $0$, $\UTfield(x,0)$ has the same distribution as the shifted two-sided Brownian motion $\bfB_{\mathrm{ts}}(2x)-2|x|$. We have 
\begin{equation}
\label{eq:conditional_lower_tail_GaussianBound}
    \begin{split}
    \prob\left(\dL(0,tL^{-3/2};0,1)\le L -M'_1L^{-1/2}  \ \big| \ \dL(0,0;0,1)\ge L\right)&\to \prob\left(\LUTfield(0,-t)\le -M'_1\right),\\
    \prob\left(\dL(M_1L^{-1},0;0,1)\ge L -M_1L^{-1/2}\ \big| \ \dL(0,0;0,1)\ge L\right) &\to \prob\left(\LUTfield(M_1,0)\ge -M_1\right)\\
    &=\int_0^\infty 2e^{-2\alpha}\Phi\left(-\sqrt{\frac{M_1}{2}}+\frac{\alpha}{\sqrt{2M_1}}\right) \rd\alpha,
    \end{split}
\end{equation}
where $\Phi$ is the cumulative distribution function of the standard normal distribution. Moreover, by the upper tail approximation of the GUE Tracy-Widom distribution, we have
\begin{equation}
    \prob\left(\dL(0,tL^{-3/2};0,1)> L -M'_1L^{-1/2}\right)/\prob\left( \dL(0,0;0,1)\ge L\right) \to e^{2(M'_1-t/3)}
\end{equation}
for sufficiently large $L$. Finally, using the scaling invariance of the directed landscape and the KPZ fixed point, we have
\begin{equation}
    \label{eq:Airy2_bound}
    \begin{split}
        &\prob\left(\dL(0,0;M_1L^{-1},tL^{-3/2})-\dL(0,0;0,tL^{-3/2})\ge M_1L^{-1/2}\right)\\
        &=\prob\left(\dL(0,0;t^{-2/3}M_1,1)-\dL(0,0;0,1) \ge t^{-1/3}M_1 \right)\\
        &=\prob\left(\cA_2(t^{-2/3}M_1)-\cA_2(0) \ge t^{-4/3}M_1^2+t^{-1/3}M_1\right),
    \end{split}
\end{equation}
where $\cA_2$ is the Airy$_2$ process defined in \cite{Prahofer-Spohn02}. Note that $\cA_2(t^{-2/3}M_1)$ and $\cA_2(0)$ become two independent and identically distributed GUE Tracy-Widom random variables as $M_1$ goes to infinity, see \cite{Adler-van_Moerbeke03,Widom04,Adler-van_Moerbeke05}. Thus, \eqref{eq:Airy2_bound} goes to zero as $M_1$ becomes large. Now we choose $M'_1$ such that $\prob\left(\LUTfield(0,-t)\le - M'_1\right)=\epsilon/8$, and choose $M_1$ such that the right-hand sides of \eqref{eq:conditional_lower_tail_GaussianBound}  and \eqref{eq:Airy2_bound} are less than $\epsilon/8$ and $e^{-2(M'_1-t/3)}\cdot \epsilon/8$ respectively. Using \eqref{eq:conditionalGeodesicLocation_split} and combining the above asympotics, we know that for sufficiently large $L$,
\begin{equation}
    \prob\left(\pi_{0,0;0,1} (tL^{-3/2}) \ge M_1L^{-1}  \ \big| \ \dL(0,0;0,1)\ge L\right) < \frac{\epsilon}{7} + \frac{\epsilon}{7} + \frac{\epsilon}{7} <\frac{\epsilon}{2}.
\end{equation}
This completes the proof of \eqref{eq:onesidebound_geodesic_location}.

\subsection{Bound of the conditional tail probabilities of the geodesic length}
\label{sec:geodesic_length}

In this subsection, we prove Lemma \ref{lm:bound_geodesic_length}. Note that $M_1$ and $\epsilon$ are both fixed.

We first prove part (1). We assume $M_2$ and $M'_2$ are two large parameters to be decided later.

Note that the event $\{L^{1/2}\dL(0,0;\pi_{0,0;0,1}(tL^{-3/2}),tL^{-3/2})\ge M_2, \dL(0,0;0,1)\ge L\}$ is a subset of $B'_1\cup B'_2$, where 
\begin{equation}
    \begin{split}
    B'_1&=\left\{\dL(0,tL^{-3/2};0,1) \le L- M'_2L^{-1/2}, \dL(0,0;0,1)\ge L\right\}, \text{ and }\\
    B'_2&=\left\{\dL(0,tL^{-3/2};0,1) > L- M'_2L^{-1/2}, \max_{y\in\realR}L^{1/2}\dL(0,0;yL^{-1},tL^{-3/2})\ge M_2\right\}.
    \end{split}
\end{equation}
Also note that the events $\{\dL(0,tL^{-3/2};0,0) > L- M'_2L^{-1/2}\}$ and $\{\max_{y\in\realR}L^{1/2}\dL(0,0;yL^{-1},tL^{-3/2})\ge M_2\}$ are independent.
Therefore,
\begin{equation}
    \begin{split}
        \prob(B'_2)
        &= \prob\left(\dL(0,tL^{-3/2};0,1) > L- M'_2L^{-1/2}\right) \prob\left(\max_{y\in\realR}L^{1/2}\dL(0,0;yL^{-1},tL^{-3/2})\ge M_2\right)\\
        &= \prob\left(\dL(0,tL^{-3/2};0,1) > L- M'_2L^{-1/2}\right) \prob\left(\max_{y\in\realR} \dL(0,0;y ,t )\ge M_2\right),
    \end{split}
\end{equation}
where we used the $1:2:3$ scaling invariance property of the directed landscape in the last equation.
We further obtain 
\begin{equation}
    \label{eq:bound_conditionalLengthSmall}
    \begin{split}
        & \prob\left(L^{1/2}\dL(0,0;\pi_{0,0;0,1}(tL^{-3/2}),tL^{-3/2})\ge M_2\ \big| \  \dL(0,0;0,1)\ge L\right)\\
        & \le  \prob\left(\dL(0,tL^{-3/2};0,1) \le L- M'_2L^{-1/2}\ \big| \  \dL(0,0;0,1)\ge L\right)\\
        &\quad +\frac{\prob\left(\dL(0,tL^{-3/2};0,1) > L- M'_2L^{-1/2}\right)}{\prob\left(\dL(0,0;0,1)\ge L\right)}\prob\left(\max_{y\in\realR} \dL(0,0;y ,t )\ge M_2\right).
    \end{split}
\end{equation}
Similarly to the discussions in the previous subsection, the right hand side of the inequality above converges to, as $L\to\infty$, 
\begin{equation}
    \prob\left(\LUTfield(0,-t)\le -M'_2\right) + e^{2(M'_2-t/3)}\prob\left(\max_{y\in\realR} \dL(0,0;y ,t )\ge M_2\right),
\end{equation}
each of which can be made smaller than $\epsilon/3$ by choosing appropriate parameters $M'_2$ and $M_2$. This implies the left hand side of \eqref{eq:bound_conditionalLengthSmall} is smaller than $\epsilon$ for sufficiently large $L$. Lemma \ref{lm:bound_geodesic_length} part (1) follows immediately.

\bigskip
Now we consider the second part of Lemma \ref{lm:bound_geodesic_length}. The following lemma will be used.
\begin{lm}
    \label{lm:max_Airy}
    Assume that $m>0$, $0<\tau<1$, $\ell,\ell'\in\realR$. We have
    \begin{equation}
        \prob\left(\max_{y\in [-m,m]} \dL(y,\tau;0,1)\ge \ell\right)\cdot \left(1-\FGUE\left(\ell'\tau^{-1/3} + m^2\tau^{-4/3}\right)\right)\le \prob \left( \dL(0,0;0,1)\ge \ell+\ell' \right).
    \end{equation}
\end{lm}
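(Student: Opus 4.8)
The plan is to exploit independence of the directed landscape over disjoint time intervals, together with the reverse triangle inequality and scaling invariance, in exactly the spirit of Lemma \ref{lm:simple_lemma}. First I would fix $y^* \in [-m,m]$ to be (an approximate) maximizer of $y \mapsto \dL(y,\tau;0,1)$ over the compact interval $[-m,m]$; since the directed landscape is continuous, such a maximizer exists almost surely. On the event $\{\max_{y\in[-m,m]}\dL(y,\tau;0,1)\ge \ell\}$ we have $\dL(y^*,\tau;0,1)\ge \ell$. Now condition on everything on the time slab $[\tau,1]$, which determines $y^*$ and the value $\dL(y^*,\tau;0,1)$, and bring in the \emph{independent} increment $\dL(0,0;y^*,\tau)$ from the slab $[0,\tau]$. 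By the reverse triangle inequality \eqref{eq:def_geodesic},
\begin{equation}
\dL(0,0;0,1)\ \ge\ \dL(0,0;y^*,\tau)+\dL(y^*,\tau;0,1).
\end{equation}
Hence if additionally $\dL(0,0;y^*,\tau)\ge \ell'$, then $\dL(0,0;0,1)\ge \ell+\ell'$.

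The second step is to lower bound $\prob(\dL(0,0;y^*,\tau)\ge \ell'\mid \mathcal{F}_{[\tau,1]})$ uniformly in the (random) point $y^*\in[-m,m]$. Using the $1:2:3$ scaling invariance (Lemma 10.2 of \cite{Dauvergne-Ortmann-Virag22}) together with the identification $\dL(0,0;y,\tau)\dequal \tau^{1/3}\dL(0,0;y\tau^{-2/3},1)=\tau^{1/3}\bigl(\hKPZ(y\tau^{-2/3},1)\bigr)$, which for fixed endpoint has a parabolic shift, one gets that for fixed $y$ the random variable $\dL(0,0;y,\tau)+\tfrac{y^2}{\tau}$ has the same law as $\tau^{1/3}$ times a GUE Tracy--Widom variable; concretely $\prob(\dL(0,0;y,\tau)\ge \ell')=1-\FGUE\bigl((\ell'+ y^2\tau^{-1})\tau^{-1/3}\bigr)$. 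Since $|y^*|\le m$ and $1-\FGUE$ is decreasing, this probability is at least $1-\FGUE\bigl((\ell'+m^2\tau^{-1})\tau^{-1/3}\bigr)=1-\FGUE\bigl(\ell'\tau^{-1/3}+m^2\tau^{-4/3}\bigr)$, the quantity in the statement.

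The third step assembles the bound. By independence of the two slabs, conditioning on $\mathcal{F}_{[\tau,1]}$ and integrating,
\begin{equation}
\prob\bigl(\dL(0,0;0,1)\ge \ell+\ell'\bigr)\ \ge\ \mathbb{E}\!\left[\mathbbm{1}_{\{\dL(y^*,\tau;0,1)\ge \ell\}}\,\prob\bigl(\dL(0,0;y^*,\tau)\ge \ell'\,\big|\,\mathcal{F}_{[\tau,1]}\bigr)\right]\ \ge\ \prob\!\left(\max_{y\in[-m,m]}\dL(y,\tau;0,1)\ge \ell\right)\bigl(1-\FGUE(\ell'\tau^{-1/3}+m^2\tau^{-4/3})\bigr),
\end{equation}
which is the claim.

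The main obstacle is the \emph{measurability of $y^*$} and making the conditioning step rigorous: one must be careful that the maximizer (or a near-maximizer chosen by a measurable selection, e.g.\ the leftmost point attaining the maximum) is $\mathcal{F}_{[\tau,1]}$-measurable and that the independence of the slab $[0,\tau]$ increment from $\mathcal{F}_{[\tau,1]}$ genuinely decouples $\dL(0,0;y^*,\tau)$ from the event $\{\dL(y^*,\tau;0,1)\ge\ell\}$ despite $y^*$ being random. This is the one point that needs a short but careful argument; everything else — the reverse triangle inequality, the scaling identity for a single directed-landscape value, and the monotonicity of $1-\FGUE$ — is routine.
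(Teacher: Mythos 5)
Your proposal is correct and follows essentially the same route as the paper: identify the (a.s.\ well-defined, $\mathcal{F}_{[\tau,1]}$-measurable) maximizer $y^*\in[-m,m]$, use independence of the two time slabs to see that conditionally $\dL(0,0;y^*,\tau)$ is a shifted, rescaled GUE Tracy--Widom variable with tail at least $1-\FGUE(\ell'\tau^{-1/3}+m^2\tau^{-4/3})$, and conclude via the reverse triangle inequality that the intersection event forces $\dL(0,0;0,1)\ge\ell+\ell'$. The measurability/conditioning point you flag is exactly the step the paper treats briefly ("for any given $y_0$"), and your treatment of it is sound.
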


\begin{proof}[Proof of Lemma \ref{lm:max_Airy}]
    Denote $y_0=\argmax_{y\in [-m,m]}\dL(y,\tau;0,1)$ which is almost surely well-defined, see \cite{Corwin-Hammond14,Rahman-Virag25}. We consider the event $A'_1\cap A'_2$, where 
    \begin{equation}
        A'_1= \{\dL(y_0,\tau;0,1)\ge \ell\},\quad A'_2=\{\dL(0,0;y_0,\tau)\ge \ell'\}.
    \end{equation}
    Note that for any given $y_0$, $\dL(0,0;y_0,\tau)$ is a shifted and scaled GUE Tracy-Widom random variable due to the independence of $\{\dL(y,\tau;0,1);y\in[-m,m]\}$ and $\{\dL(0,0;y,\tau):y\in[-m,m]\}$. Thus, we have
    \begin{equation}
        \prob\left(A'_2\right) = 1-\FGUE\left(\ell'\tau^{-1/3} + y_0^2\tau^{-4/3}\right) \ge 1-\FGUE\left(\ell'\tau^{-1/3} + m^2\tau^{-4/3}\right).
    \end{equation} 
    Moreover, we have
    \begin{equation}
        \begin{split}
        \prob(A_1'\cap A_2') &\ge \prob(A'_1)\left(1-\FGUE\left(\ell'\tau^{-1/3} + m^2\tau^{-4/3}\right)\right)\\
        &=\prob\left(\max_{y\in [-m,m]} \dL(y,\tau;0,1)\ge \ell\right)\cdot \left(1-\FGUE\left(\ell'\tau^{-1/3} + m^2\tau^{-4/3}\right)\right).
        \end{split}
    \end{equation}
    Lemma \ref{lm:max_Airy} follows by noting $A_1'\cap A'_2$ is a subset of $\{\dL(0,0;0,1)\ge \ell+\ell'\}$.
\end{proof}

Now we choose $\tau=tL^{-3/2}$, $m=M_1L^{-1}$, $\ell= L +M_3L^{-1/2}$ and $\ell'=-\frac12M_3L^{-1/2}$ in Lemma \ref{lm:max_Airy}, where $M_1$ is a fixed positive constant and $M_3$ is a fixed large parameter such that
\begin{equation}
\label{eq:def_M3}
    \frac{2e^{-M_3}}{1-\FGUE\left(-\frac12M_3 t^{-1/3}+ M_1^2t^{-4/3}\right)} \le \epsilon.
\end{equation}
We have
\begin{equation}
    \begin{split}
        &\prob\left(\max_{|Ly|\le M_1 } \dL(y,tL^{-3/2};0,1)\ge L+M_3L^{-1/2} \right)\\
        &\le \frac{\prob\left(\dL(0,0;0,1)\ge L+\frac12M_3L^{-1/2}\right)}{1-\FGUE\left(-\frac12M_3 t^{-1/3}+ M_1^2t^{-4/3}\right)}.
    \end{split}
\end{equation}
Note that, using \eqref{eq:GUE_asymp},
\begin{equation}
    \frac{\prob\left(\dL(0,0;0,1)\ge L+\frac12M_3L^{-1/2}\right)}{\prob\left(\dL(0,0;0,1)\ge L\right)}\to e^{-M_3}
\end{equation}
as $L\to\infty$. Inserting it into the previous inequality, we get 
\begin{equation}
    \begin{split}
        &\prob\left(\max_{|Ly|\le M_1 } \dL(y,tL^{-3/2};0,1)\ge L+M_3L^{-1/2} \ \big| \  \dL(0,0;0,1)\ge L\right) \\
        &\le \frac{2e^{-M_3}}{1-\FGUE\left(-\frac12M_3 t^{-1/3}+ M_1^2t^{-4/3}\right)}
    \end{split}    
\end{equation}
for sufficiently large $L$. This bound is less than $\epsilon$ by our assumption on $M_3$, see \eqref{eq:def_M3}. Lemma \ref{lm:bound_geodesic_length} (2) follows by noting that $\{ L \pi (tL^{-3/2}) \in (-M_1, M_1), L^{1/2}   \dL(tL^{-3/2})  <- M_3, \dL(1)\ge L \}$ is a subset of $\{\max_{|Ly|\le M_1 } \dL(y,tL^{-3/2};0,1)\ge L+M_3L^{-1/2},\dL(0,0;0,1)\ge L\}$.

\section{Proof of Proposition \ref{prop:properties_p}}
\label{sec:proof_proposition2}

Proposition \ref{prop:properties_p} part (1) follows from a simple symmetry of the formulas \eqref{eq:def_hat_rI} and \eqref{eq:rI_explicit}. In fact, if we do a change of variable $u_i=-v'_i$ and $v_i=-u'_i$, then the formulas \eqref{eq:def_hat_rI} and \eqref{eq:rI_explicit} retain the same structure, except that $u_i$ and $v_i$ are replaced by $u'_i$ and $v'_i$ respectively, and that $x$ is replaced by $-x$. This implies $\rp(h,x;t)=\rp(h,-x;t)$ and $\hat\rp(h,x;t)=\hat\rp(h,-x;t)$.

For part (2), the first identity \eqref{eq:relation_two_densities1} follows from the definition of $\hat\rp$, see \eqref{eq:def_hat_rp01}. The second identity \eqref{eq:relation_two_densities2} is a corollary of \eqref{eq:relation_two_densities1}.

The proof of the other two parts of Proposition \ref{prop:properties_p} relies on asymptotic analysis, and is provided in the following two subsections. We also mention the following two simple identities about the function $\rH$ defined in \eqref{eq:def_rH}, which will be used in both subsections.
\begin{equation}
\label{eq:rH_identity1}
    \rH(u,1;v,-1) = 2(1-v)(1+u)(u-v)
\end{equation}
and
\begin{equation}
\label{eq:rH_identity2}
    \rH(u,-1,1;v,1,-1)=\rH(u;v)=0.
\end{equation}

\subsection{Large time limit of the joint density function $\rp$ and proof of \eqref{eq:large_time_asymptotics}}
\label{sec:large_time_limit}

We first remark that the statement \eqref{eq:large_time_asymptotics} implies the same statement for $\hat\rp$ by using Proposition \ref{prop:properties_p} (2). Thus the third part of this proposition is reduced to the statement for $\rp$, which will be proved in this subsection.

The proof of \eqref{eq:large_time_asymptotics} is very similar to that of the Theorem 1.1 in \cite{Liu22b}, more concretely, the proof of the approximation (3.7) in \cite{Liu22b}. Although the parameters are scaled differently, the formulas are the same, the leading contributions both come from the first term in the expansion, and the estimates can be handled in a similar way in this paper and \cite{Liu22b}. Hence, we only provide the ideas of the proof and omit the details here.

Our goal is to prove that 
\begin{equation}
\label{eq:LargeTime_rp}
\rp\left(t+h\sqrt{t},\frac12x\sqrt{t};t\right) = \frac{2}{t}(\phi(h)\phi(x) +o(1))
\end{equation}
as $t\to\infty$. Here $h$ and $x$ are fixed real numbers. Recall the definition of $\rp$ and the equation \eqref{eq:def_rp}. The dominating factor, the $f$ function, becomes
\begin{equation}
f(w) =\exp\left( -\frac13tw^3 +\frac12x\sqrt{t}w^2 +(t+h\sqrt{t})w\right).
\end{equation}
Note that when $t$ becomes large, the critical points for $f(w)$ are $w=-1,1$. If we deform the contours $u_i$ and $v_i$ near the critical points, we get $f(u_i)\approx f(-1)=\exp\left(-\frac{2}{3}t+o(t)\right)$ and $f(v_i)\approx f(1)=\exp\left(\frac23t+o(t)\right)$.  Using this fact, we see that the leading contribution of the summation \eqref{eq:def_rp} comes from the term when $n=1$, and other terms with larger $n$ values give exponentially smaller contributions. 

Now we  evaluate the contribution from the term with $n=1$ in the formula \eqref{eq:def_rp}. When $n=1$, we denote $\vec{U}=(u)$ and $\vec{V}=(v)$ and obtain, using \eqref{eq:rI_explicit} and \eqref{eq:rH_identity1},
\begin{equation}
    \rI_{n=1}(\vec{U};\vec{V})=-2e^{\frac43t + 2h\sqrt{t}} \frac{(1+v)(1-u)}{u-v} \frac{f(u)}{f(v)}.
\end{equation}
Hence heuristically
\begin{equation}
\label{eq:LargeTime_rp_leading_term}
    \rp\left(t+h\sqrt{t},\frac12x\sqrt{t};t\right)\approx -2e^{\frac43t + 2h\sqrt{t}}\int_{\Gamma_\rL} \int_{\Gamma_\rR} \frac{(1+v)(1-u)}{u-v} \frac{f(u)}{f(v)} \frac{\rd u}{2\pi\rmi} \frac{\rd v}{2\pi\rmi}.
\end{equation}
Note that although originally the $u$-contour $\Gamma_\rL$ and $v$-contour $\Gamma_\rR$ have certain restrictions on their intersection with the real axis, as shown in Figure \ref{fig:Gamma_Contours}, this restriction can be removed in the formula above since the integrand is analytic at $u=-1$ and $v=1$.

We apply the standard asymptotic analysis for the integrals in \eqref{eq:LargeTime_rp_leading_term}. As we mentioned before, the leading contribution of the integrals comes from $u\approx -1$ and $v\approx 1$. More explicitly, we zoom in the neighborhood of $-1$ and $1$ and rescale the $u$, $v$ variables as follows
\begin{equation}
    u= -1 +\frac{\xi}{\sqrt{2t}},\quad v= 1+\frac{\eta}{\sqrt{2t}},
\end{equation}
for $\xi,\eta\in\rmi\realR$. Then 
\begin{equation}
    f(u)\approx e^{-\frac23t -h\sqrt{t} +\frac12 x\sqrt{t} +\frac12\xi^2 -\frac{x-h}{\sqrt{2}}\xi},\quad f(v)\approx e^{\frac23t +h\sqrt{t} +\frac12 x\sqrt{t} -\frac12\eta^2 +\frac{x+h}{\sqrt{2}}\eta}
\end{equation}
and
\begin{equation}
    \frac{(1+v)(1-u)}{u-v}\approx -2.
\end{equation}
As a result, we have
\begin{equation}
    \rp\left(t+h\sqrt{t},\frac12x\sqrt{t};t\right)\approx \frac{2}{t}\int_{\rmi\realR}\int_{\rmi\realR} e^{\frac12\xi^2 -\frac{x-h}{\sqrt{2}}\xi +\frac12\eta^2 -\frac{x+h}{\sqrt{2}}\eta}\frac{\rd \xi}{2\pi\rmi}\frac{\rd \eta}{2\pi\rmi}=\frac{1}{\pi t} e^{-\frac{(x-h)^2+(x+h)^2}{4}} 
\end{equation}
which implies \eqref{eq:LargeTime_rp}. 

Note that our heuristic arguments can be made rigorous by the standard asymptotic analysis and the dominated convergence theorem, as shown in \cite{Liu22b}.

\subsection{Approximation of the joint density functions $\rp$ and $\hat\rp$ when $h$ or $|x|$ goes to infinity}
\label{sec:right_tail}

In this subsection, we fix $t>0$ and consider the asymptotics of $\rp(h,x;t)$ and $\hat\rp(h,x;t)$ as one of $h$ and $x$ goes to infinity while the other is fixed. Note that the formula of $\rp(h,x;t)$ has a very similar structure as that of the one point distribution of the lifted upper tail field $\prob\left(\LUTfield(x,-t) \ge -h\right)$ as described in Theorem \ref{thm:connection_rp_UTfield}. When $x$ and $t$ are fixed, the asymptotics of $\prob\left(\LUTfield(x,-t) \ge -h\right)$ when $h\to\infty$ was obtained in \cite[Proposition 2.9]{Liu-Zhang25}. Although the formula of $\rp(h,x;t)$ contains an extra factor in the integrand (see the last factor in \eqref{eq:rI_explicit}) which indeed affects the leading contribution term, the idea of the asymptotic remains similar. For this reason, we will focus on the main ideas to prove \eqref{eq:right_tail_rp_h} and \eqref{eq:right_tail_rp_x}, in particular the differences compared to the work in \cite{Liu-Zhang25}, and omit the details which are similar to that paper.

\medskip

Now we consider \eqref{eq:right_tail_rp_h} and \eqref{eq:right_tail_rp_x}. Due to the symmetry of $\rp$ and $\hat{\rp}$ in Proposition \ref{prop:properties_p} (1), we always assume that $x\ge 0$ in this subsection. For either case to be proved, we have
\begin{equation}
\label{eq:def_M}
M := t^{-1/6} \left( h + \frac{x^2}{t}\right)^{1/2}
=\begin{dcases}
t^{-1/6}h^{1/2}(1+O(h^{-1})), & \text{ when $x$ fixed and $h\to\infty$},\\
t^{-2/3}x (1+O(x^{-2})),& \text{ when $h$ fixed and $x\to\infty$}.
\end{dcases}
\end{equation}

\medskip

We first focus on the asymptotic analysis of $\rp$. The case for $\hat\rp$ is similar and will be discussed at the end of this subsection.

Recall Definition \ref{def:rp}. We have
\begin{equation}
\label{eq:recall_rp}
\rp(h,x;t) =\sum_{n\ge 1} \frac{1}{(n!)^2} \int_{\Gamma_\rL^n\times\Gamma_\rR^n} \rI_n(\vec{U};\vec{V}) \prod_{i=1}^n \frac{\rd u_i}{2\pi\rmi}\prod_{i=1}^n \frac{\rd v_i}{2\pi\rmi}
\end{equation}
with
\begin{equation}
\label{eq:recall_rI}
\begin{split}
\rI_n(\vec{U};\vec{V})
& = (-1)^{n}e^{-\frac{2}{3}t+2h}
\frac{\prod_{1\le i<j\le n}(u_i-u_j)^2(v_i-v_j)^2}{\prod_{i=1}^n\prod_{j=1}^n (u_i-v_j)^2}   \prod_{i=1}^n \frac{(1+v_i)(1-u_i)f_{h,x;t}(u_i)}{(1+u_i)(1-v_i)f_{h,x;t}(v_i)}\\
&\quad \cdot \rH(\vec{U}\sqcup(1);\vec{V}\sqcup(-1)).
\end{split}
\end{equation}
Note that the parameters $h,x,t$ appear only in the exponential functions $e^{-\frac23t+2h}$ and $f_{h,x;t}$. Moreover, one can directly write
\begin{equation}
\begin{split}
    f_{h,x;t}(w) = e^{-\frac{t}{3}w^3 +xw^2+ hw}
    =e^{M^{3}\left(-\frac{1}{3}z^3 +z\right)} \cdot e^{\frac{2x^3}{3t^2}+\frac{hx}{t}},
\end{split}
\end{equation}
where 
\begin{equation}
z=\frac{t^{1/3}}{M}\left(w-\frac{x}{t}\right).
\end{equation}
The function $-\frac13z^3+z$ has two critical points $1$ and $-1$. Denote the corresponding $w$ values by
\begin{equation}
w_c^- = \frac{x}{t} - t^{-1/3}M,\quad \text{and}\quad w_c^+ = \frac{x}{t} + t^{-1/3}M.
\end{equation}
Note that $w_c^- < 1$ and $w_c^+ > 0$ under our assumptions that $x > 0$ and $M \to \infty$, in both cases we aim to prove.

When $M\to\infty$, the standard steepest descent method gives
\begin{equation}
\label{eq:steepestDescent_oneterm}
    \int_{\Gamma_\rL} f_{h,x;t}(u) \frac{\rd u}{2\pi\rmi} \approx e^{\frac{hx}{t} +\frac{2x^3}{3t^2}} \cdot e^{-\frac{2}{3}M^{3}+O(\log M)}, \text{ and } \int_{\Gamma_\rR} \frac{1}{f_{h,x;t}(v)} \frac{\rd v}{2\pi\rmi} \approx e^{-\frac{hx}{t} -\frac{2x^3}{3t^2}} \cdot e^{-\frac{2}{3}M^{3}+O(\log M)}
\end{equation}
by deforming the $\Gamma_\rL$ and $\Gamma_\rR$ contours close to the point $w_c^-$ and $w_c^+$ respectively, where the main contribution comes from. Therefore, if we ignore all other factors in $\rI_n$, we have
\begin{equation}
\int_{\Gamma_\rL^n\times\Gamma_\rR^n} \prod_{i=1}^n\frac{f_{h,x;t}(u_i)}{f_{h,x;t}(v_i)}\prod_{i=1}^n \frac{\rd u_i}{2\pi\rmi}\prod_{i=1}^n \frac{\rd v_i}{2\pi\rmi}\approx e^{-\frac{4n}3M^{3}},
\end{equation}
which decreases as $n$ increases. Note that other factors in $\rI_n(\vec{U};\vec{V})$, except for the factor $e^{-\frac23t+2h}$ which does not depend on the $u_i,v_i$ variables, are independent of the large parameters. Moreover, when we deform all the $u_i$ contours $\Gamma_\rL$ close to $w_c^-$, at most one pole at $u_i=-1$ could make a nonzero contribution due to the factor $\prod_{i<j}(u_i-u_j)^2$. Similarly for the $v_i$  contours $\Gamma_\rR$. Therefore, heuristically the first two terms in the expansion \eqref{eq:recall_rp} would dominate the summation in both cases, and we have
\begin{equation}
\label{eq:approx_rp_two_terms}
    \rp(h,x;t)\approx \int_{\Gamma_\rL\times\Gamma_\rR} \rI_1(u,v)\frac{\rd u }{2\pi\rmi}\frac{\rd v}{2\pi\rmi} + \frac{1}{4} \int_{\Gamma_\rL^2\times\Gamma_\rR^2} \rI_2(u_1,u_2;v_1,v_2)\frac{\rd u_1}{2\pi\rmi}\frac{\rd u_2}{2\pi\rmi}\frac{\rd v_1}{2\pi\rmi}\frac{\rd v_2}{2\pi\rmi}.
\end{equation}
Below we approximate the two terms on the right hand.

\bigskip

For the first term, similar to the $n=1$ case we discussed in the previous subsection, we have
\begin{equation}
    \int_{\Gamma_\rL\times\Gamma_\rR} \rI_1(u,v)\frac{\rd u }{2\pi\rmi}\frac{\rd v}{2\pi\rmi} = -2e^{-\frac23t +2h} \int_{\Gamma_\rL\times\Gamma_\rR} \frac{(1+v)(1-u)}{u-v} \frac{f_{h,x;t}(u)}{f_{h,x;t}(v)} \frac{\rd u }{2\pi\rmi}\frac{\rd v}{2\pi\rmi}.
\end{equation}
We apply the steepest descent analysis to the double contour integral. The main contribution of the integral comes from the neighborhoods of $u\approx w_c^-$ and $v\approx w_c^+$. We write $u=w_c^-+ t^{-1/3}\xi M^{-1/2}$, $v=w_c^++t^{-1/3}\eta M^{-1/2}$, and obtain
\begin{equation}
\begin{split}
    \int_{\Gamma_\rL\times\Gamma_\rR} \frac{(1+v)(1-u)}{u-v} \frac{f_{h,x;t}(u)}{f_{h,x;t}(v)} \frac{\rd u }{2\pi\rmi}\frac{\rd v}{2\pi\rmi} &\approx \frac{(1+w_c^+)(1-w_c^-)e^{-4M^3/3}}{t^{2/3}M (w_c^--w_c^+)} \int_{\rmi\realR} \int_{\rmi\realR} e^{\xi^2+\eta^2}\frac{\rd \xi}{2\pi\rmi}\frac{\rd \eta}{2\pi\rmi}\\
    &\approx \frac{(1+t^{-1/3}M )^2 - x^2t^{-2}}{4\pi (-2t^{1/3}M^2 )}e^{-4M^3/3}\\
    &\approx 
    \begin{dcases}
    -\frac{1}{8\pi t}e^{-4M^3/3}, & \text{ when $x$ is fixed and $h\to\infty$},\\
    -\frac{1}{4\pi x}e^{-4M^3/3}, & \text{ when $h$ is fixed and $x\to\infty$}.
    \end{dcases}
    \end{split}
\end{equation}
Thus we have 
\begin{equation}
\label{eq:leading_term_asymptotics_rp}
    \int_{\Gamma_\rL\times\Gamma_\rR} \rI_1(u,v)\frac{\rd u }{2\pi\rmi}\frac{\rd v}{2\pi\rmi} \approx \begin{dcases}
    \frac{1}{4\pi t}e^{-\frac{4\left(h+\frac{x^2}{t}\right)^{3/2}}{3t^{1/2}}+2h-\frac23t}, & \text{ when $x$ is fixed and $h\to\infty$},\\
    \frac{1}{2\pi x}e^{-\frac{4\left(h+\frac{x^2}{t}\right)^{3/2}}{3t^{1/2}}+2h-\frac23t}, & \text{ when $h$ is fixed and $x\to\infty$}.
    \end{dcases}
\end{equation}

We remark that a similar approximation was done in \cite{Liu-Zhang25} for the double integral of \eqref{eq:recall_rI} for $n=1$ but without the last factor $\rH(u,1;v,-1)=2(1-v)(1+u)(u-v)$ (see \eqref{eq:rH_identity1}) in $\rI_1(u,v)$. The main difference in that paper is that due to this missing factor which would cancel the factors $(1+u)(1-v)$ in the denominator, the leading contribution of the double integral in that paper is obtained when one integral results in the residue at $u=-1$ or $v=1$ while the other integral is deformed to the critical point. This results in a difference of the exponent in the approximation in \cite{Liu-Zhang25}. See the proof of Proposition 2.9 in the paper for more details.

\medskip

We continue to consider the second term in \eqref{eq:approx_rp_two_terms}. We will prove that this term is exponentially smaller compared to the first term we just considered.

We consider the two cases separately. When $x>0$ is fixed and $h\to\infty$, we see that the critical points $w_c^-<-1$ and $w_c^+>1$. Therefore when we deform the $u_1,u_2$ contours to the critical point $w_c^{-}$, we need to pass the pole at $u_1=-1$ or $u_2=-1$. Similarly for the $v_1$ and $v_2$ contours. Note the factors $(u_1-u_2)^2(v_1-v_2)^2$ in $\rI_2(u_1,u_2;v_1,v_2)$. These factors imply that at most one of the $u_i$ variables is evaluated at the residue at $-1$, and at most one of the $v_i$ variables is evaluated at the residue at $1$. Finally, it is easy to verify that if $u_i=-1$ and $v_j=1$ for some $1\le i,j\le 2$, the factor $\rH(u_1,u_2,1;v_1,v_2,-1)$ in $\rI_2(u_1,u_2;v_1,v_2)$ becomes zero, see \eqref{eq:rH_identity2}. Combining the arguments, we can write $\int_{\Gamma_\rL^2\times\Gamma_\rR^2} \rI_2(u_1,u_2;v_1,v_2)\frac{\rd u_1}{2\pi\rmi}\frac{\rd u_2}{2\pi\rmi}\frac{\rd v_1}{2\pi\rmi}\frac{\rd v_2}{2\pi\rmi}$ as the sum of three terms, the first term is a quadruple integral each of which passes a critical point, the second term is a triple integral  each of which passes a critical point and one $u_i$ integral is degenerated to the residue at $u_i=-1$, and the third term is a triple integral each of which passes a critical point and one $v_i$ integral is degenerated to the residue at $v_i=1$. Using the approximation \eqref{eq:steepestDescent_oneterm}, we know that the three terms are approximately
\begin{equation}
      e^{-\frac{8M^3}{3}-\frac{2}{3}t+2h+O(\log M)}, e^{-2M^3 -\frac{hx}{t}-\frac{2x^3}{3t^2} -\frac{1}{3}t+x+h +O(\log M)},\text{ and }e^{-2M^3 +\frac{hx}{t}+\frac{2x^3}{3t^2} -\frac{1}{3}t-x+h +O(\log M)}
\end{equation}
respectively. It is straightforward to verify that each exponent above is significantly less than $-\frac{4M^3}{3}+2h-\frac{2t}{3}$ in the exponent of \eqref{eq:leading_term_asymptotics_rp}.

The second case for $h$ fixed but $x\to\infty$ is similar. The only difference is that the $w_c^-= \frac{x}{t}-t^{-1/3}M = O(x^{-1})$. Thus we could deform the $u_i$ contours to the critical point without encountering any pole. Therefore $\int_{\Gamma_\rL^2\times\Gamma_\rR^2} \rI_2(u_1,u_2;v_1,v_2)\frac{\rd u_1}{2\pi\rmi}\frac{\rd u_2}{2\pi\rmi}\frac{\rd v_1}{2\pi\rmi}\frac{\rd v_2}{2\pi\rmi}$ can be expressed as the sum of two terms instead of three. But the other arguments remain the same and these terms are still exponentially smaller than \eqref{eq:leading_term_asymptotics_rp}.

In conclusion, we obtain the following asymptotics
\begin{equation}
    \rp(h,x;t)\approx \begin{dcases}
    \frac{1}{4\pi t}e^{-\frac{4\left(h+\frac{x^2}{t}\right)^{3/2}}{3t^{1/2}}+2h-\frac23t}, & \text{ when $x$ is fixed and $h\to\infty$},\\
    \frac{1}{2\pi x}e^{-\frac{4\left(h+\frac{x^2}{t}\right)^{3/2}}{3t^{1/2}}+2h-\frac23t}, & \text{ when $h$ is fixed and $x\to\infty$}.
    \end{dcases}
\end{equation}
This proves Proposition \ref{prop:properties_p} (4) for the function $\rp$.

The proof for the function $\hat \rp$ is similar. We note that the only difference is the following factor in $\hat\rI_n$
\begin{equation}
    \frac{2}{\sum_{i=1}^n(v_i-u_i)}
\end{equation}
which does not affect the exponents in the arguments for the integral of $\rI_n$ above. This term when $n=1$ contributes the following factor in the asymptotic analysis
\begin{equation}
    \frac{2}{w_c^+-w_c^-} =\frac{t^{1/3}}{M}.
\end{equation}
This implies $\hat{\rp}(h,x;t)\approx \frac{t^{1/3}}{M}\rp(h,x;t)$ as $M \to \infty$. Note the approximation of $M$ in \eqref{eq:def_M}. The approximations for $\hat\rp$ follow immediately.

\def\cydot{\leavevmode\raise.4ex\hbox{.}}

\end{document}